\pgfplotsset{compat=1.14}
\renewcommand{\arraystretch}{1.3}
\newtheorem{theorem}{Theorem}[section]
\newtheorem{lemma}[theorem]{Lemma}
\newtheorem{proposition}[theorem]{Proposition}
\newtheorem{corollary}[theorem]{Corollary}
\newtheorem{conjecture}[theorem]{Conjecture}
\newtheorem{problem}[theorem]{Problem}
\theoremstyle{definition}
\newtheorem{example}[theorem]{Example}
\theoremstyle{remark}
\newtheorem{remark}[theorem]{Remark}
\numberwithin{equation}{section}
\begin{document}

\title[Convex functions on graphs: Sum of the eigenvalues]{Convex functions on graphs: Sum of the eigenvalues}


\author[A. Bahmani]{Asghar Bahmani}
\address{Department of Mathematics and Computer Science, Amirkabir University of Technology, P.O. Box: 15875-4413, Tehran, Iran}
\email{asghar.bahmani@gmail.com, asghar.bahmani@aut.ac.ir}

\subjclass[2010]{Primary 05C50; Secondary 15A42, 15A18, 90C27}
\keywords{convex function; tree; graph; sum of the eigenvalues; weighted conjugate degree sequence; Laplacian matrix}%




\begin{abstract}
Let $G$ be a simple graph with the Laplacian matrix $L(G)$ and let $e(G)$ be the number of edges of $G$. A conjecture by Brouwer and a conjecture by Grone and Merris state that the sum of the $k$ largest Laplacian eigenvalues of $G$ is at most $e(G)+\binom{k+1}{2}$ and $\sum_{i=1}^{k}d_{i}^{*}$, respectively, where $(d_{i}^{*})_{i}$ is the conjugate of the degree sequence $(d_i)_{i}$. We generalize these conjectures to weighted graphs and symmetric matrices. Moreover, among other results we show that under some assumptions, concave upper bounds on convex functions of symmetric real matrices are equivalent to concave upper bounds on convex functions of $(0,1)$  matrices.
\end{abstract}

\maketitle

\section{Introduction}

In this paper, all matrices are real, unless otherwise noted. We denote by $M_{n}(\mathbb{R})$ and $Sym_{n}(\mathbb{R})$ the set of all real matrices and the symmetric real matrices of order $n$, respectively. A symmetric matrix $A$ is said to be \textit{positive semi-definite}, if $\boldsymbol{x}^{T}A\boldsymbol{x}\geq 0$ for every $\boldsymbol{x}\in\mathbb{R}^{n}$ and we write $A\geq 0$. For a positive integer $m$, we denote by $[m]$, the set $\{1,\ldots,m\}$ and we denote by $\boldsymbol{j}_{m}$ the all $1$'s vector in $\mathbb{R}^{m}$. If $S\subseteq \mathbb{R}^{m}$, we denote by $S^{\perp}$ the orthogonal complement of $S$. For a positive integer $i$, $\boldsymbol{e}_{i}$ denotes the vector with a $1$ in the $i^{\rm th}$ coordinate and $0$'s elsewhere, in $\mathbb{R}^{m}$.

For two positive integers $i,j\leq m$, $E_{ij}$ denotes the  $m\times m$ matrix $\boldsymbol{e}_{i}\boldsymbol{e}_{j}^{T}$. 
For a set $X$ and $t$ disjoint nonempty subsets $X_{1},\ldots,X_{t}$, we denote by $\dot{\cup}_{i=1}^{t}X_{i}$, a partition of $X$.
For a vector $\boldsymbol{x}=(x_{1},\ldots,x_{m})\in\mathbb{R}^{m}$ and a subset $\mathcal{P}\subseteq [m]$, we denote the restriction of $\boldsymbol{x}$ to the index set $\mathcal{P}$ by $\boldsymbol{x}_{|\mathcal{P}}$; $Diag(\boldsymbol{x})$ is a diagonal $m\times m$ matrix with diagonal entries $Diag(\boldsymbol{x})_{ii}=x_{i}$. We denote by $x_{\downarrow 1}\geq\cdots \geq x_{\downarrow m}$, the decreasing order of $(x_{i})_{i}$.

Assume that $X\subseteq \mathbb{R}^{n}$ is a vector space over $\mathbb{R}$. A function $f:X\rightarrow \mathbb{R}$ is called \textit{positively homogeneous}
if $f(\alpha\boldsymbol{v}) = \alpha f(\boldsymbol{v})$, for all positive $\alpha\in \mathbb{R}$ and $\boldsymbol{v}\in X$. For a Hermitian matrix $M$ with  eigenvalues  $\lambda_{1}(M)\geq \cdots\geq \lambda_{n}(M)$ and $k\in[n]$, $S_{k}(A)$ denotes $\sum_{j=1}^{k}\lambda_{j}(M)$.
For a symmetric matrix $A=[\omega_{ij}]$ with zero diagonal, we define $e(A)=\sum_{i<j}\omega_{ij}$.
The Laplacian matrix of a simple graph $G$, denoted by $L_{G}$, is $D(G)-A(G)$, where $D(G)$ is a diagonal matrix with diagonal entries $(d_{i})_i$ and $A(G)$ is the adjacency matrix of $G$. We denote by $S_n$ and $P_n$ the star and the path of order $n$, respectively.

Interlacing theorem is a key theorem in spectral theory and we state it in the sequel.

\begin{theorem}[Interlacing Theorem]{\rm \cite[Theorem 3.2]{zhan}}\label{int}
	Let  $A$  be a  Hermitian matrix  with  eigenvalues  $\lambda_{1}\geq \cdots\geq \lambda_{n}$   and  let  $B$  be  one  of  its
	principal  submatrices.  If  the  eigenvalues  of $B$  are  $\theta_{1}\geq \cdots\geq \theta_{m}$, then $\lambda_{i}\geq \theta_{i}\geq \lambda_{n-m+i}$, for $i\in [m]$.
\end{theorem}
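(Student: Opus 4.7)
The plan is to deduce both inequalities from the Courant--Fischer min-max characterization of the eigenvalues of a Hermitian matrix. Recall (or take as a known ingredient, provable from the spectral theorem by testing subspaces against the top or bottom eigenspaces) that for any Hermitian $M\in M_{n}(\mathbb{C})$ with eigenvalues $\mu_{1}\geq\cdots\geq\mu_{n}$ and any $k\in[n]$,
$$
\mu_{k}(M)=\max_{\dim S=k}\min_{\boldsymbol{x}\in S\setminus\{0\}}\frac{\boldsymbol{x}^{*}M\boldsymbol{x}}{\boldsymbol{x}^{*}\boldsymbol{x}}=\min_{\dim S=n-k+1}\max_{\boldsymbol{x}\in S\setminus\{0\}}\frac{\boldsymbol{x}^{*}M\boldsymbol{x}}{\boldsymbol{x}^{*}\boldsymbol{x}},
$$
where $S$ ranges over linear subspaces of $\mathbb{C}^{n}$.

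Since $B$ is a principal submatrix of $A$, there is an index set $\mathcal{P}\subseteq[n]$ with $|\mathcal{P}|=m$ and a natural isometric embedding $\iota\colon\mathbb{C}^{m}\hookrightarrow\mathbb{C}^{n}$, obtained by placing coordinates in the positions indexed by $\mathcal{P}$ and zeros elsewhere. The key property I would use is that for every $\boldsymbol{y}\in\mathbb{C}^{m}$,
$$
(\iota\boldsymbol{y})^{*}A(\iota\boldsymbol{y})=\boldsymbol{y}^{*}B\boldsymbol{y},\qquad \|\iota\boldsymbol{y}\|=\|\boldsymbol{y}\|,
$$
which holds because the entries of $A$ outside the block indexed by $\mathcal{P}$ never interact with vectors supported on $\mathcal{P}$. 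Hence $\iota$ carries any $k$-dimensional subspace of $\mathbb{C}^{m}$ to a $k$-dimensional subspace of $\mathbb{C}^{n}$ on which the Rayleigh quotients of $A$ and $B$ coincide.

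For the lower bound $\lambda_{i}\geq\theta_{i}$, I would pick an $i$-dimensional subspace $T\subseteq\mathbb{C}^{m}$ attaining the outer maximum in the max-min formula for $\theta_{i}(B)$. Its image $\iota(T)$ is an $i$-dimensional subspace of $\mathbb{C}^{n}$, so testing it in the max-min formula for $\lambda_{i}(A)$ gives
$$
\lambda_{i}(A)\geq\min_{\boldsymbol{x}\in\iota(T)\setminus\{0\}}\frac{\boldsymbol{x}^{*}A\boldsymbol{x}}{\boldsymbol{x}^{*}\boldsymbol{x}}=\min_{\boldsymbol{y}\in T\setminus\{0\}}\frac{\boldsymbol{y}^{*}B\boldsymbol{y}}{\boldsymbol{y}^{*}\boldsymbol{y}}=\theta_{i}.
$$
For the upper bound $\theta_{i}\geq\lambda_{n-m+i}$, I would instead invoke the dual, min-max formula: pick an $(m-i+1)$-dimensional subspace $T\subseteq\mathbb{C}^{m}$ with $\theta_{i}(B)=\max_{\boldsymbol{y}\in T\setminus\{0\}}\boldsymbol{y}^{*}B\boldsymbol{y}/\boldsymbol{y}^{*}\boldsymbol{y}$, and note that $\dim\iota(T)=m-i+1=n-(n-m+i)+1$, so that $\iota(T)$ is an admissible test subspace for $\lambda_{n-m+i}(A)$ in its min-max form, yielding $\lambda_{n-m+i}(A)\leq\theta_{i}$.

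There is no real obstacle beyond accepting Courant--Fischer as a black box; the entire argument is a dimension-matching bookkeeping exercise. The only subtle point is the appearance of the index $n-m+i$ on the upper side, which is forced precisely by the identity $m-i+1=n-(n-m+i)+1$ needed to match the codimension required by the min-max characterization at level $n-m+i$ in $\mathbb{C}^{n}$ with the codimension available at level $i$ in $\mathbb{C}^{m}$.
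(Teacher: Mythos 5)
Your proof is correct. The paper does not prove this statement at all --- it is quoted as a known result, \cite[Theorem 3.2]{zhan} --- and your Courant--Fischer argument is exactly the standard proof of Cauchy interlacing that the cited reference gives; in particular the dimension bookkeeping on both sides, including the identity $m-i+1=n-(n-m+i)+1$ needed for the lower interlacing inequality, checks out.
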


The following theorems are important facts on convexity of the function $S_{k}(A)$ that we use it in this paper.

\begin{theorem}\label{trace}{\rm \cite[Lemma 3.7]{zhan}}
	Let  $A$  be an $n\times n$ Hermitian matrix, $1\leq k\leq n$. Then 
	\[
	\sum_{j=1}^{k}\lambda_{j}(A)=\max_{U^{*}U=I_{k}}tr(U^{\ast}AU),
	\]
	\[
	\sum_{j=1}^{k}\lambda_{n-j+1}(A)=\min_{U^{*}U=I_{k}}tr(U^{\ast}AU),
	\]
	where $U\in M_{n,k}(\mathbb{C})$ and $M_{n,k}(\mathbb{C})$ is the set of all $n\times k$ matrices over $\mathbb{C}$.
\end{theorem}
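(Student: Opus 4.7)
The plan is to reduce both identities to a simple linear-programming-style argument via the spectral decomposition of $A$. I would focus on the first identity; the second follows by applying it to $-A$, since $\lambda_{j}(-A) = -\lambda_{n-j+1}(A)$ turns maximization into minimization after a sign flip.

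Since $A$ is Hermitian, first I would diagonalize it as $A = VDV^{*}$ with $V$ unitary and $D = Diag(\lambda_{1}(A),\ldots,\lambda_{n}(A))$. Given any $U\in M_{n,k}(\mathbb{C})$ with $U^{*}U = I_{k}$, set $W := V^{*}U$; then $W^{*}W = I_{k}$ as well, and
\[
tr(U^{*}AU) = tr(W^{*}DW) = \sum_{i=1}^{n}\lambda_{i}(A)\,p_{i},\qquad \text{where } p_{i} := \sum_{j=1}^{k}|w_{ij}|^{2}.
\]
Thus the problem is reduced to maximizing a linear functional in the variables $p_{i}$ against the (known, monotone) eigenvalues $\lambda_{i}(A)$.

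The key combinatorial step is to identify the constraints satisfied by $(p_{i})_{i}$. Clearly $p_{i}\geq 0$; summing gives $\sum_{i}p_{i} = tr(W^{*}W) = k$; and $p_{i}\leq 1$ because the columns of $W$ are orthonormal and can be completed to an orthonormal basis of $\mathbb{C}^{n}$, so each row of $W$ has squared Euclidean norm at most $1$. Under the constraints $0\leq p_{i}\leq 1$ and $\sum p_{i}=k$, the linear functional $\sum_{i}\lambda_{i}(A)p_{i}$ is maximized by placing all of the mass on the $k$ largest eigenvalues: $p_{1}=\cdots=p_{k}=1$ and the rest zero. This yields the upper bound $\sum_{j=1}^{k}\lambda_{j}(A)$.

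For attainment, take $U$ to consist of the first $k$ columns of $V$; then $U^{*}AU = Diag(\lambda_{1}(A),\ldots,\lambda_{k}(A))$, whose trace is exactly $\sum_{j=1}^{k}\lambda_{j}(A)$, so the supremum is achieved. I do not expect any single step to be a serious obstacle; the only subtle point is the box constraint $p_{i}\leq 1$, which I would handle cleanly through the unitary extension of $W$ rather than through an ad hoc row-norm computation.
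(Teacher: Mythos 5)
The paper does not prove this statement: it is Ky Fan's trace maximum principle, quoted verbatim from the cited reference (Zhan, \emph{Matrix Theory}, Lemma 3.7). Your argument is correct and is essentially the standard proof of that lemma — diagonalizing $A$, reducing to maximizing $\sum_i \lambda_i(A)p_i$ subject to $0\le p_i\le 1$ and $\sum_i p_i=k$ (with the box constraint justified by extending $W$ to a unitary matrix), verifying attainment with the top $k$ eigenvectors, and deducing the minimum version by passing to $-A$ — so there is nothing to object to.
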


\begin{theorem}\label{summaj}{\rm \cite[Theorem 3.8]{zhan}}
	Let $A,B$ be $n\times n$ Hermitian matrices, $1\leq k\leq n$. Then
	\[\sum_{j=1}^{k}\lambda_{j}(A+B)\leq \sum_{j=1}^{k}\lambda_{j}(A)+\sum_{j=1}^{k}\lambda_{j}(B).\]
	For $k=n$ the inequality is an equality.
\end{theorem}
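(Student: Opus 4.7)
The plan is to deduce this inequality directly from the variational characterization in Theorem~\ref{trace}, which is precisely the tool designed for this kind of partial-sum majorization argument (it is essentially Ky Fan's maximum principle). The inequality will follow from the subadditivity of a pointwise maximum of a sum.

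First, I would invoke Theorem~\ref{trace} applied to the Hermitian matrix $A+B$: there exists some $U_{0}\in M_{n,k}(\mathbb{C})$ with $U_{0}^{*}U_{0}=I_{k}$ such that
\[
\sum_{j=1}^{k}\lambda_{j}(A+B)=\operatorname{tr}(U_{0}^{*}(A+B)U_{0}).
\]
By linearity of the trace, the right-hand side splits as $\operatorname{tr}(U_{0}^{*}AU_{0})+\operatorname{tr}(U_{0}^{*}BU_{0})$. Next, since the same matrix $U_{0}$ is a feasible point in the maximization problems defining $S_{k}(A)$ and $S_{k}(B)$ (but not necessarily the optimal one), I would apply Theorem~\ref{trace} twice more to conclude
\[
\operatorname{tr}(U_{0}^{*}AU_{0})\leq \sum_{j=1}^{k}\lambda_{j}(A),\qquad \operatorname{tr}(U_{0}^{*}BU_{0})\leq \sum_{j=1}^{k}\lambda_{j}(B).
\]
Adding these two inequalities yields the desired bound.

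For the equality case $k=n$, I would observe that $U$ must be an $n\times n$ unitary matrix, so $\operatorname{tr}(U^{*}MU)=\operatorname{tr}(M)$ for every Hermitian $M$; hence $\sum_{j=1}^{n}\lambda_{j}(M)=\operatorname{tr}(M)$, and the equality $\sum_{j=1}^{n}\lambda_{j}(A+B)=\sum_{j=1}^{n}\lambda_{j}(A)+\sum_{j=1}^{n}\lambda_{j}(B)$ is simply the additivity of the trace.

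There is no serious obstacle here; the entire content of the theorem is packaged inside the Ky Fan formula of Theorem~\ref{trace}, and the argument is essentially the standard observation that the supremum of a sum is at most the sum of the suprema. If anything, the only minor subtlety is to remember to test both maxima with the \emph{same} witness $U_{0}$ that is optimal for $A+B$, rather than choosing independent optimizers.
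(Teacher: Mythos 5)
Your proof is correct and is precisely the standard derivation of Ky Fan's inequality from the maximum principle of Theorem~\ref{trace} (testing both maxima with the single witness $U_{0}$ optimal for $A+B$, and using trace additivity for $k=n$); this is the same route as in the cited source. The paper itself does not reproduce a proof of this statement---it only cites \cite[Theorem 3.8]{zhan}---so there is nothing further to compare against.
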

Suppose that $G$ is a graph of order $n$ and degree sequence $(d_1,\ldots,d_n)$. The conjugate sequence degree of  $(d_1,\ldots,d_n)$ is defined as $(d_1^{*},\ldots,d_n^{*})$, where $d_{j}^{*}=|\{i:\, d_i\geq j\}|$.
In this paper we consider weighted versions for the following conjectures:
\begin{conjecture}[Grone-Merris Conjecture]\cite{gm}
Let $k,n\in \mathbb{N}$ and $k\in[n]$. For any simple graph $G$, if $\mu_1(L_G)\geq\cdots\geq\mu_n(L_G)$ are the Laplacian eigenvalues of $G$, then we have $\sum_{i=1}^{k}\mu_i(L_G)\leq \sum_{i=1}^{k} d_{i}^{*}$. 
\end{conjecture}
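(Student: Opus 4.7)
The plan is to combine the variational characterization of Theorem~\ref{trace} with an edge-by-edge decomposition of the Laplacian, matching the combinatorial content of the conjugate degree sequence against the eigenvalue mass carried by an optimal $k$-dimensional test subspace. The first step is the elementary identity
\[
\sum_{j=1}^{k}d_{j}^{*}\;=\;\sum_{i=1}^{n}\min(d_{i},k),
\]
obtained by double-counting pairs $(i,j)$ with $d_{i}\geq j$ and $j\leq k$, which recasts the right-hand side of the conjecture as a vertex-indexed sum one can hope to track edge by edge.

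By Theorem~\ref{trace}, $\sum_{i=1}^{k}\mu_{i}(L_{G})=\max_{U^{T}U=I_{k}}\mathrm{tr}(U^{T}L_{G}U)$. Writing $L_{G}=\sum_{e=\{a,b\}}(\boldsymbol{e}_{a}-\boldsymbol{e}_{b})(\boldsymbol{e}_{a}-\boldsymbol{e}_{b})^{T}$, the trace decomposes as $\sum_{e}\|U^{T}(\boldsymbol{e}_{a}-\boldsymbol{e}_{b})\|^{2}$. Setting $p_{v}:=\|U^{T}\boldsymbol{e}_{v}\|^{2}=(UU^{T})_{vv}$, one has $0\leq p_{v}\leq 1$ and $\sum_{v}p_{v}=k$ because $UU^{T}$ is an orthogonal projection of rank $k$. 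I would then seek a half-edge allocation $\alpha_{e,v}\in[0,1]$ with $\alpha_{e,a}+\alpha_{e,b}=1$ that distributes each edge's contribution to its endpoints, and aggregate at every vertex into a local bound of the form $c(v)\leq \min(d_{v},k)$ summing to the desired inequality.

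The main obstacle lies precisely at this aggregation step: a naive split yields only $\sum_{v}d_{v}p_{v}\leq \sum_{i=1}^{k}d_{i}$, which reproduces Brouwer's $e(G)+\binom{k+1}{2}$-type estimate rather than the sharper Grone--Merris one, because the cap $\min(d_{v},k)$ is never activated at the high-degree vertices. To restore it one appears to need either a Hall-type matching between edges and the cells of the Ferrers diagram of the conjugate partition $(d_{j}^{*})$, or a reduction to threshold graphs, which are the expected extremal configurations for a given conjugate sequence and whose Laplacian spectrum is known to coincide exactly with $(d_{j}^{*})$, so that equality holds there. I would attempt the threshold-graph reduction first by a local ``shifting'' argument, but the spectral monotonicity under such moves is delicate: shifting an edge need not increase $S_{k}(L_{G})$, and Theorem~\ref{summaj} only controls increases under matrix addition, not under the rank-two perturbations induced by edge swaps. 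For this reason I expect the correct route to pass through the matching argument in the spirit of H.~Bai's proof, where the key ingredient is an auxiliary combinatorial assignment of edges to two cells of the conjugate diagram satisfying suitable compatibility constraints, which in turn furnishes the vertex-level bound needed to close the inequality.
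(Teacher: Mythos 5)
Your proposal is not a proof; it is a plan that stalls exactly at the point where the theorem becomes hard, and you say so yourself. The identity $\sum_{j=1}^{k}d_{j}^{*}=\sum_{i=1}^{n}\min(d_{i},k)$ and the variational setup via Theorem~\ref{trace} are fine, but the aggregation step --- producing a vertex-level quantity $c(v)\leq\min(d_{v},k)$ from a half-edge allocation --- is never carried out, and this is precisely the content of the Grone--Merris conjecture. Note also that even your ``naive'' fallback is not valid as stated: for an edge $\{a,b\}$ one has $\|U^{T}(\boldsymbol{e}_{a}-\boldsymbol{e}_{b})\|^{2}=p_{a}+p_{b}-2\langle U^{T}\boldsymbol{e}_{a},U^{T}\boldsymbol{e}_{b}\rangle$, and the cross term can be negative, so $\mathrm{tr}(U^{T}L_{G}U)\leq\sum_{v}d_{v}p_{v}$ fails in general (already for $K_{2}$ with $k=1$, where the left side is $2$ and the right side is $1$). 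So the bridge from the edge decomposition to any degree-indexed bound, capped or uncapped, is missing, and your two candidate repairs (a Hall-type matching against the Ferrers diagram, or a shifting reduction to threshold graphs) are named but not executed; as you observe, $S_{k}(L_{G})$ is not monotone under edge shifts and Theorem~\ref{summaj} does not control rank-two swaps.

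For calibration: the paper does not prove this statement either. It is stated as a conjecture and immediately noted to be a theorem of Bai (reference \cite{ba}); the paper's actual contribution is to \emph{use} Bai's theorem as the base case of a reduction (Theorems~\ref{equimat} and~\ref{ggm}) that extends the inequality from $(0,1)$ matrices to arbitrary symmetric weight matrices via convexity and positive homogeneity of $A\mapsto\sum_{i=1}^{k}\mu_{i}(L_{A})$, together with the shift Lemma~\ref{weightlem}. So the comparison is between your incomplete sketch and a citation: what would be needed to complete your argument is essentially the whole of Bai's proof, which proceeds by an induction splitting the graph along the $k$-th conjugate degree and handling a semi-bipartite remainder --- machinery well beyond the allocation scheme you outline.
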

Grone-Merris Conjecture is proved in \cite{ba}.

\begin{conjecture}[Brouwer's Conjecture]\label{brouwer}\cite{bh}
	Let $k,n\in \mathbb{N}$ and $k\in[n]$. For any simple graph $G$ of order $n$, if $\mu_1(L_G)\geq\cdots\geq\mu_n(L_G)$ are the Laplacian eigenvalues of $G$, then we have $\sum_{i=1}^{k}\mu_i(L_G)\leq e(G)+\binom{k+1}{2}$. 
\end{conjecture}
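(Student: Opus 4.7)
The plan is to combine Theorem \ref{trace} with the edge decomposition $L_G = \sum_{uv \in E(G)}(\boldsymbol{e}_u - \boldsymbol{e}_v)(\boldsymbol{e}_u - \boldsymbol{e}_v)^T$. If $U \in M_{n,k}(\mathbb{R})$ has orthonormal columns and attains $S_k(L_G) = tr(U^T L_G U)$, write $r_v := U^T \boldsymbol{e}_v$. Then
\[
S_k(L_G) = \sum_{uv \in E(G)} \|r_u - r_v\|^2, \qquad \sum_{v \in V(G)} r_v r_v^T = U^T U = I_k,
\]
so $\sum_v \|r_v\|^2 = k$, and Conjecture \ref{brouwer} becomes the geometric assertion that $\sum_{uv \in E(G)}\|r_u - r_v\|^2 \leq e(G) + \binom{k+1}{2}$ for every graph $G$ on $n$ vertices and every vector family $(r_v)$ satisfying these two identities.

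My first attempt would be induction on $e(G)$. Theorem \ref{summaj} applied to $L_G = L_{G-e} + L_e$ only gives $S_k(L_G) \leq S_k(L_{G-e}) + 2$, yielding a slack of $+1$ per edge removed---too weak on its own. Sharpening it requires identifying an edge $uv$ with $\|r_u - r_v\|^2 \leq 1$ in the extremal configuration; the constraint $\sum_v r_v r_v^T = I_k$ forces the average of $\|r_u - r_v\|^2$ over incident pairs to be bounded, but translating ``average'' to ``pointwise'' demands structural input on $G$. An alternative inductive scheme is vertex deletion followed by Theorem \ref{int}, which interlaces $S_k(L_G)$ with $S_k$ of a principal submatrix; this handles a vertex of small degree cleanly but stalls on regular-like graphs.

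A second, more promising route is the reduction strategy announced in the abstract: upgrade $L_G$ to a weighted Laplacian or general positive semi-definite matrix, phrase Conjecture \ref{brouwer} as a concave-upper-bound question on a convex domain of Gram configurations $\{r_v r_v^T\}$, and exploit the announced equivalence between concave upper bounds on convex functions of symmetric real matrices and on $(0,1)$ matrices. In the regime where the Grone--Merris bound $\sum_{i=1}^k d_i^*$ (proved in \cite{ba}) already lies below $e(G)+\binom{k+1}{2}$---for instance on threshold-like graphs---it finishes the job immediately, leaving only the complementary regime where $\sum d_i^*$ is too large for the target bound; that regime is what must be attacked directly.

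The main obstacle is well known: Conjecture \ref{brouwer} has resisted proof since \cite{bh} and is established only for restricted classes (trees, split and regular graphs, $k \in \{1,2,n-1,n\}$, and certain density regimes). A complete general proof from only Theorems \ref{int}--\ref{summaj} is unlikely, since both give rank-$1$ subadditivity or interlacing at precision too loose to close the $+1$-per-edge gap. The most realistic plan is therefore to (i) reframe the conjecture in the paper's weighted/symmetric-matrix setting, (ii) establish the reduction between concave bounds on symmetric matrices and on $(0,1)$ matrices, and (iii) obtain partial or reformulated results in that framework, with the full resolution of Conjecture \ref{brouwer} plausibly lying beyond the present strategy.
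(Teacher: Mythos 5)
You were asked to prove Brouwer's Conjecture, but the paper itself presents this statement only as a conjecture: it gives no proof, cites \cite{dz}, \cite{hmt}, \cite{whl} for special cases, and in Section 4 merely reformulates and generalizes it to weighted graphs (Conjectures \ref{gbrouwer} and \ref{strongmaj}), using Lemma \ref{weightlem} and Theorem \ref{equimat} to show the weighted version is equivalent to the original $(0,1)$ one. Your proposal likewise proves nothing, and you say so explicitly; judged as a proof it has an unavoidable gap --- the conjecture is open --- but judged as a diagnosis it is accurate, and your plan (i)--(iii) is essentially the paper's actual program: the ``concave upper bound on a convex, positively homogeneous function over a $(0,1)$ domain'' reduction you invoke is exactly Theorem \ref{equimat} applied to $\psi(A)=\sum_{i=1}^{k}\mu_i(L_A)$, and the weighted reformulation plus equivalence is Conjecture \ref{strongmaj} together with the remark following it. So you and the paper end in the same place: a reduction and a restatement, not a resolution.

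Two cautions on the fragments you do sketch. First, the Grone--Merris bound $\sum_{i=1}^{k}d_i^*$ and the Brouwer bound $e(G)+\binom{k+1}{2}$ are incomparable in general, so the theorem of \cite{ba} ``finishes the job'' only on classes where one separately verifies $\sum_{i=1}^{k}d_i^*\leq e(G)+\binom{k+1}{2}$; the complementary regime you defer is the entire difficulty, and threshold graphs (where Grone--Merris holds with equality) are precisely the conjectured extremal cases for Brouwer, not an easy subcase. Second, in your edge-deletion induction the hoped-for edge with $\|r_u-r_v\|^2\leq 1$ need not exist pointwise: already for $G=K_2$ and $k=1$ the unique edge contributes $\|r_u-r_v\|^2=2$, so the averaging constraint $\sum_v r_vr_v^T=I_k$ cannot be converted into the per-edge bound without additional structural hypotheses on $G$. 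Neither observation is fatal to your assessment --- you correctly conclude that the conjecture cannot be derived from Theorems \ref{int}--\ref{summaj} alone --- but they confirm that the inductive routes you list would stall exactly where you predict.
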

Brouwer's Conjecture is proved in some special cases: see \cite{dz}, \cite{hmt}, and \cite{whl}.

In this paper after introduction section, in Section 2, we consider convex functions with some special upper bounds. Under some assumptions we show the equivalence of these bounds in two cases: real convex domains and $(0,1)$ domains. In Sections 3 and 4, we generalize Grone-Merris Conjecture and Brouwer's Conjecture to weighted graphs, respectively. In Section 5, we consider positive semi-definite decompositions of matrices and their relation to the sum of the eigenvalues. In the last section, we state some lower and upper conjectured bounds on the sum of the largest (and smallest) eigenvalues of trees and simple graphs.

\section{Convex Functions and Concave Bounds}

Suppose that $U\subseteq \mathbb{R}^{m}$. We denote by $U_{\{0,\pm 1\}}$ ($U_{\{0,1\}}$, respectively) the set $\{(a_{1},\ldots,a_{m})\in U:a_{i}\in \{-1,0,1\}, i\in [m]\}$ ($\{(a_{1},\ldots,a_{m})\in U:a_{i}\in \{0,1\}, i\in [m]\}$, respectively). In the following theorem, for convex functions we show that the concave bounds over domains $U$ and $U_{\{0,\pm 1\}}$ are equivalent.
First, we need the following lemma that is straightforward, so we omit its proof.
\begin{lemma}\label{conpos}
	Suppose that  $\varphi:U\rightarrow \mathbb{R}$ is convex. If $\varphi$ is positive homogeneous, then $\varphi(x+y)\leq \varphi(x)+\varphi(y)$, for any $x,y\in U$. 
\end{lemma}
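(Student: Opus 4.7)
The plan is to exploit the interplay between convexity and positive homogeneity via the standard midpoint trick. Since $U$ is a vector space, for any $x,y\in U$ the midpoint $\tfrac{1}{2}(x+y)$ lies in $U$, so applying the defining convexity inequality at $\lambda = \tfrac{1}{2}$ gives
\[
\varphi\!\left(\tfrac{1}{2}x + \tfrac{1}{2}y\right) \;\leq\; \tfrac{1}{2}\varphi(x) + \tfrac{1}{2}\varphi(y).
\]
Multiplying both sides by $2$ yields $2\varphi\!\left(\tfrac{1}{2}(x+y)\right)\leq \varphi(x)+\varphi(y)$.

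Next I would use positive homogeneity with $\alpha = 2 > 0$ on the vector $\tfrac{1}{2}(x+y)\in U$ to rewrite the left-hand side:
\[
\varphi(x+y) \;=\; \varphi\!\left(2 \cdot \tfrac{1}{2}(x+y)\right) \;=\; 2\,\varphi\!\left(\tfrac{1}{2}(x+y)\right).
\]
Combining this identity with the previous inequality produces exactly $\varphi(x+y)\leq \varphi(x)+\varphi(y)$, which is the desired subadditivity.

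There is essentially no obstacle here; the only subtlety worth checking is that $U$ being a vector space ensures $\tfrac{1}{2}(x+y)\in U$ (so that convexity is actually applicable at that point), and that the coefficient $2$ used in positive homogeneity is strictly positive (so the hypothesis, which is stated only for positive scalars, applies). Both are automatic from the stated setting, which is presumably why the authors left the argument to the reader.
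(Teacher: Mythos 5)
Your argument is correct and is precisely the standard midpoint computation that the paper has in mind when it calls the lemma ``straightforward'' and omits the proof: convexity at $\lambda=\tfrac12$ plus positive homogeneity with $\alpha=2$ gives $\varphi(x+y)=2\varphi\bigl(\tfrac12(x+y)\bigr)\leq\varphi(x)+\varphi(y)$. Your check that $\tfrac12(x+y)\in U$ (since $U$ is a linear subspace in the paper's setting) and that the scalar $2$ is positive covers the only points of care.
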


\begin{theorem}\label{equiv0}
Let $t,m\in \mathbb{N}$, $P_{1},\ldots,P_{t}\subseteq [m]$, and $[m]=\dot{\cup}_{i=1}^{t}P_{i}$. Suppose that $\mathcal{P}\subseteq [m]$, $U=\{\displaystyle\sum_{i\in \mathcal{P}}a_{i}\boldsymbol{e}_{i}:a_{i}\in \mathbb{R}\}$, and $\varphi:U\rightarrow \mathbb{R}$ is convex and positive homogeneous.
 If $f:U\rightarrow \mathbb{R}$ is one of the following functions: 
\begin{itemize}
\item[(i.)] A concave and positive homogeneous function,
\item[(ii.)]for given $\{\alpha_{ij}\in \mathbb{R}\}$,  $f(\boldsymbol{a})=\sum\limits_{j}\sum\limits_{i=1}^{t}\alpha_{ij}\boldsymbol{a}^{i}_{\downarrow j}$, where $\boldsymbol{a}^{i}=\boldsymbol{a}_{|P_{i}}$,
\end{itemize}
then  the following statements are equivalent: 
\begin{enumerate}
\item
For every $\boldsymbol{a}\in U$, $\varphi(\boldsymbol{a})\leq f(\boldsymbol{a})$.
\item
For every $\boldsymbol{a}\in U_{\{0,\pm 1\}}$, $\varphi(\boldsymbol{a})\leq f(\boldsymbol{a})$.
\end{enumerate}
\end{theorem}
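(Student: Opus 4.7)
The direction $(1)\Rightarrow(2)$ is immediate since $U_{\{0,\pm 1\}}\subseteq U$; the work is in $(2)\Rightarrow(1)$. The plan is to decompose any $\boldsymbol{a}\in U$ as a non-negative combination $\boldsymbol{a}=\sum_\ell c_\ell\boldsymbol{v}_\ell$ with $\boldsymbol{v}_\ell\in U_{\{0,\pm 1\}}$, chosen so that the decomposition respects the ordering of the entries of $\boldsymbol{a}$ on each block $P_i$, and then to chain hypothesis $(2)$ through this decomposition.

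For the decomposition I would use the layer-cake construction. Let $0<p_1<\cdots<p_r$ be the distinct positive values and $n_1<\cdots<n_s<0$ the distinct negative values taken by the entries of $\boldsymbol{a}$, and set $A_k=\{i\in\mathcal{P}:a_i\geq p_k\}$, $B_k=\{i\in\mathcal{P}:a_i\leq n_k\}$. With conventions $p_0=n_{s+1}=0$,
\[
\boldsymbol{a}=\sum_{k=1}^{r}(p_k-p_{k-1})\boldsymbol{1}_{A_k}+\sum_{k=1}^{s}(n_{k+1}-n_k)(-\boldsymbol{1}_{B_k})
\]
is of the required form. A short case analysis on the signs of $a_p,a_q$ shows the key order-preserving property: if $p,q\in P_i$ with $a_p\geq a_q$, then $(\boldsymbol{v}_\ell)_p\geq(\boldsymbol{v}_\ell)_q$ for every $\ell$. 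Hence a single permutation of $P_i$ simultaneously sorts $\boldsymbol{a}^i$ and each $\boldsymbol{v}_\ell^i$ in decreasing order, giving the pointwise identity $\boldsymbol{a}^{i}_{\downarrow j}=\sum_\ell c_\ell(\boldsymbol{v}_\ell^i)_{\downarrow j}$ for all $i,j$.

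With this in hand, Lemma \ref{conpos} (subadditivity of $\varphi$), positive homogeneity, and hypothesis $(2)$ combine to give
\[
\varphi(\boldsymbol{a})\leq\sum_\ell\varphi(c_\ell\boldsymbol{v}_\ell)=\sum_\ell c_\ell\varphi(\boldsymbol{v}_\ell)\leq\sum_\ell c_\ell f(\boldsymbol{v}_\ell),
\]
and it remains to bound the right-hand side by $f(\boldsymbol{a})$. In case $(i)$, applying Lemma \ref{conpos} to the convex, positive-homogeneous function $-f$ yields superadditivity of $f$, which closes the argument. In case $(ii)$, the order-preserving identity above yields exactly $f(\boldsymbol{a})=\sum_\ell c_\ell f(\boldsymbol{v}_\ell)$.

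The main obstacle is case $(ii)$: there $f$ is in general neither convex nor concave (for instance $\boldsymbol{a}\mapsto\boldsymbol{a}^1_{\downarrow 1}$ is the maximum of the entries of $\boldsymbol{a}$ in $P_1$, hence convex), so no subadditivity or superadditivity argument on $f$ alone can close the gap. The layer-cake decomposition is designed precisely so that every summand $\boldsymbol{v}_\ell$ is sorted by the same permutation that sorts $\boldsymbol{a}^i$, which converts the needed bound on $f$ into an exact linear identity.
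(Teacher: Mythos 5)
Your proof is correct and is essentially the paper's argument: both decompose $\boldsymbol{a}$ as a positive combination of $\{0,\pm 1\}$-vectors that are simultaneously sorted with $\boldsymbol{a}$ on each block $P_i$, then combine subadditivity of $\varphi$ (Lemma \ref{conpos}) with hypothesis (2) and, for case (i.) superadditivity of $f$, for case (ii.) the exact linear identity along the common sorting permutation. The only difference is presentational: the paper obtains the decomposition by induction on the number of nonzero entries, peeling off $\alpha\,\mathrm{sgn}(\boldsymbol{a})$ with $\alpha=\min_j\{|a_j|:a_j\neq 0\}$ at each step, whereas you write the full layer-cake decomposition in closed form.
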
 

\begin{proof}$(1)\rightarrow (2)$:
We have $U_{\{0,\pm 1\}}\subseteq U$ and the result is done.\\
$(2)\rightarrow (1)$:
Assume that $\boldsymbol{a}\in U$ and $k$ is the number of nonzero elements of $\boldsymbol{a}$. We prove this case by strong induction on $k$. If $k=0$, then $\boldsymbol{a}\in U_{\{0,\pm 1\}}$ and we have $\varphi(\boldsymbol{a})\leq f(\boldsymbol{a})$.
So, we suppose that $k>0$ and the assertion is true for $0,\ldots,k-1$. Suppose that $\boldsymbol{a}\in U$, $\alpha=\min_{j}\{|a_{j}|:a_{j}\neq 0\}$ and $\bar{\boldsymbol{a}}=(\bar{a}_{1},\ldots,\bar{a}_{m})$ is a member of $U_{\{0,\pm 1\}}$ such that $\bar{a}_{j}=sgn(a_{j})$, $j\in [m]$. Therefore  $\boldsymbol{a}-\alpha\bar{\boldsymbol{a}}\in U$ and  $\boldsymbol{a}=(\alpha\bar{\boldsymbol{a}})+(\boldsymbol{a}-\alpha\bar{\boldsymbol{a}})$.
Hence, 
\begin{align*}
\varphi(\boldsymbol{a}) & =\varphi(\alpha\bar{\boldsymbol{a}}+(\boldsymbol{a}-\alpha\bar{\boldsymbol{a}}))&\\
& \leq \varphi(\alpha\bar{\boldsymbol{a}})+\varphi(\boldsymbol{a}-\alpha\bar{\boldsymbol{a}})& \text{by Lemma \ref{conpos},}\\
& = \alpha\varphi(\bar{\boldsymbol{a}})+\varphi(\boldsymbol{a}-\alpha\bar{\boldsymbol{a}})&\\
& \leq \alpha f(\bar{\boldsymbol{a}})+f(\boldsymbol{a}-\alpha\bar{\boldsymbol{a}})&\text{by induction and (2),}\\
& \leq f(\boldsymbol{a}).& \text{(*)}
\end{align*}
Since for any decreasing orders of  $\bar{\boldsymbol{a}}$, $\boldsymbol{a}-\alpha\bar{\boldsymbol{a}}$, and $\boldsymbol{a}$ we have $\alpha f(\bar{\boldsymbol{a}})+f(\boldsymbol{a}-\alpha\bar{\boldsymbol{a}})=f(\boldsymbol{a})$,
 thus (*) holds for the function (ii.).
\end{proof}

\begin{remark}
Let $\psi:X\rightarrow \mathbb{R}$ be a convex function. Since $-\psi$ is concave, the statement similar to theorem above, for lower convex bounds on concave functions is true.
\end{remark}

\subsection{Matrix Parameters and Matrix Entries}

A parameter $\psi$ of symmetric matrices is \textit{convex parameter} (\textit{concave parameter}, respectively), if for any matrices $A,B\in Sym_{n}(\mathbb{R})$ and $\alpha\in [0,1]$, we have $\psi(\alpha A+(1-\alpha)B)\leq \alpha\psi(A)+(1-\alpha)\psi(B)$ ($\psi(\alpha A+(1-\alpha)B)\geq \alpha\psi(A)+(1-\alpha)\psi(B)$, respectively).\\

Suppose that $V\subseteq M_{n}(\mathbb{R})$. We denote by $V_{\{0,\pm 1\}}$ ($V_{\{0,1\}}$, respectively) the set $\{[\omega_{ij}]\in V:\omega_{ij}\in \{-1,0,1\}, i,j\in [n]\}$ ($\{[\omega_{ij}]\in V:\omega_{ij}\in \{0,1\}, i,j\in [n]\}$, respectively).
For a matrix $A=[\omega_{ij}]$, we denote by $A_{\downarrow 1}\geq\cdots \geq A_{\downarrow n^{2}}$ the decreasing order of $(\omega_{ij})_{i,j}$. Similar to Theorem \ref{equiv0}, we state the following theorem for real matrices.

\begin{theorem}\label{equimat2}
Let $n\in \mathbb{N}$, $P\subseteq [n]\times [n]$, and $V=\{\displaystyle\sum_{(i,j)\in P}\omega_{ij}E_{ij}:\omega_{ij}\in\mathbb{R}\}$. Suppose that $\psi:V\rightarrow \mathbb{R}$ is a convex and positive homogeneous function. If $f:V\rightarrow \mathbb{R}$ is one of the following functions:
\begin{itemize}
\item[(i.)] A concave and positive homogeneous function,
\item[(ii.)]for given $\{\alpha_{i}\in \mathbb{R}\}$, $f(A)=\sum\limits_{i\in [n^{2}]}\alpha_{i}A_{\downarrow i}$,
\item[(iii.)]for given $\{\alpha_{ij}\in \mathbb{R}\}$, $f(A)=\sum\limits_{j\in [n]}\sum\limits_{i\in [n]}\alpha_{ij}\boldsymbol{r}^{i}_{\downarrow j}$, where $\boldsymbol{r}^{i}$ is the $i^{\rm th}$ row of $A$,
\end{itemize}
then  the following statements are equivalent: 
\begin{enumerate}
\item
For every $A\in V$,  $\psi(A)\leq f(A)$.
\item
For every $A\in V_{\{0,\pm 1\}}$,  $\psi(A)\leq f(A)$.
\end{enumerate}
\end{theorem}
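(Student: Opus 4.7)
The approach mirrors the proof of Theorem \ref{equiv0} almost verbatim, with the matrix entries $\omega_{ij}$ playing the role of the vector entries $a_i$. The direction $(1)\Rightarrow(2)$ is immediate from the inclusion $V_{\{0,\pm 1\}}\subseteq V$, so all the content is in the converse.

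For $(2)\Rightarrow(1)$, the plan is to induct strongly on the number $k$ of nonzero entries of $A=[\omega_{ij}]\in V$. The case $k=0$ is trivial, since $A=0\in V_{\{0,\pm 1\}}$. In the inductive step, set
\[
\alpha=\min\bigl\{|\omega_{ij}|:(i,j)\in P,\ \omega_{ij}\neq 0\bigr\},\qquad \bar{A}=\sum_{(i,j)\in P}\mathrm{sgn}(\omega_{ij})\,E_{ij}.
\]
Then $\bar{A}\in V_{\{0,\pm 1\}}$, while $A-\alpha\bar{A}\in V$ has strictly fewer than $k$ nonzero entries, because every entry achieving the minimum $\alpha$ cancels while every other nonzero entry retains its sign. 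Lemma \ref{conpos} together with positive homogeneity of $\psi$, followed by hypothesis (2) applied to $\bar{A}$ and the inductive hypothesis applied to $A-\alpha\bar{A}$, then gives
\[
\psi(A)\leq \psi(\alpha\bar{A})+\psi(A-\alpha\bar{A})=\alpha\psi(\bar{A})+\psi(A-\alpha\bar{A})\leq \alpha f(\bar{A})+f(A-\alpha\bar{A}).
\]

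It then suffices to verify $\alpha f(\bar{A})+f(A-\alpha\bar{A})\leq f(A)$ for each admissible $f$. In case (i), $-f$ is convex and positively homogeneous, so Lemma \ref{conpos} applied to $-f$ yields the superadditivity of $f$ and the bound is immediate. For cases (ii) and (iii), I would invoke a sort-compatibility argument: since $\mathrm{sgn}$ is monotone and every nonzero entry of $A$ satisfies $|\omega_{ij}|\geq\alpha$, any permutation that lists the entries of $A$ (all $n^2$ of them in case (ii), or the entries within each row in case (iii)) in weakly decreasing order simultaneously lists the corresponding entries of $\bar{A}$ and of $A-\alpha\bar{A}$ in weakly decreasing order. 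Evaluating $f$ with such a common sort turns each of $f(A)$, $f(\bar{A})$, and $f(A-\alpha\bar{A})$ into the same linear functional of the matrix entries, so the decomposition $A=\alpha\bar{A}+(A-\alpha\bar{A})$ produces the equality $f(A)=\alpha f(\bar{A})+f(A-\alpha\bar{A})$, which closes the induction.

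The principal obstacle, just as in Theorem \ref{equiv0}, is the sort-compatibility step for cases (ii) and (iii): the abstract convexity framework gives case (i) for free, but cases (ii) and (iii) require this combinatorial observation that a single decreasing ordering works simultaneously for $A$, $\bar{A}$, and $A-\alpha\bar{A}$. Everything else is a direct matrix transcription of the vector argument, using $P\subseteq[n]\times[n]$ in place of $\mathcal{P}\subseteq[m]$.
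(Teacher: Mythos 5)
Your proposal is correct and follows essentially the same route as the paper: the paper gives no separate proof of Theorem \ref{equimat2}, stating only that it is obtained ``similar to Theorem \ref{equiv0}'', and your argument is precisely that transcription --- the same peel-off decomposition $A=\alpha\bar{A}+(A-\alpha\bar{A})$, the same strong induction on the number of nonzero entries, and the same additivity check for each admissible $f$. Your sort-compatibility justification for cases (ii) and (iii) is in fact spelled out more carefully than the one-line remark the paper offers for the analogous step (*) in Theorem \ref{equiv0}.
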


Same as Theorems \ref{equiv0} and \ref{equimat2}, we state the theorems below for $U_{\{0,1\}}$.

\begin{theorem}[]\label{equilinear}
Let $t,m\in \mathbb{N}$, $P_{1},\ldots,P_{t}\subseteq [m]$, and $[m]=\dot{\cup}_{i=1}^{t}P_{i}$. Suppose that $\mathcal{P}\subseteq [m]$, $U=\{\displaystyle\sum_{i\in \mathcal{P}}a_{i}\boldsymbol{e}_{i}:a_{i}\geq 0\}$, and $\varphi:U\rightarrow \mathbb{R}$ is convex and positive homogeneous.
 If $f:U\rightarrow \mathbb{R}$ is one of the following functions:
\begin{itemize}
\item[(i.)] A concave and positive homogeneous function,
\item[(ii.)]for given $\{\alpha_{ij}\in \mathbb{R}\}$, $f(\boldsymbol{a})=\sum\limits_{j}\sum\limits_{i=1}^{t}\alpha_{ij}\boldsymbol{a}^{i}_{\downarrow j}$, where $\boldsymbol{a}^{i}=\boldsymbol{a}_{|P_{i}}$,
\end{itemize}
then  the following statements are equivalent: 
\begin{enumerate}
\item
For every $\boldsymbol{a}\in U$,  $\varphi(\boldsymbol{a})\leq f(\boldsymbol{a})$.
\item
For every $\boldsymbol{a}\in U_{\{0,1\}}$,  $\varphi(\boldsymbol{a})\leq f(\boldsymbol{a})$.
\end{enumerate}
\end{theorem}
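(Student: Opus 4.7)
The plan is to mirror the proof of Theorem \ref{equiv0} almost verbatim, replacing $U_{\{0,\pm 1\}}$ by $U_{\{0,1\}}$ and the ``sign'' vector by the indicator of the support. The direction $(1) \Rightarrow (2)$ is immediate, since $U_{\{0,1\}} \subseteq U$.

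For $(2) \Rightarrow (1)$ I would proceed by strong induction on $k$, the number of nonzero entries of $\boldsymbol{a} \in U$. The base case $k=0$ gives $\boldsymbol{a} = 0 \in U_{\{0,1\}}$. For the inductive step, set $\alpha = \min\{a_j : a_j > 0\}$ (here $a_j \geq 0$ by definition of $U$, so the minimum is of positive values), and define $\bar{\boldsymbol{a}} \in U_{\{0,1\}}$ by $\bar{a}_j = 1$ if $a_j > 0$ and $\bar{a}_j = 0$ otherwise. Since $\alpha > 0$, the entries of $\boldsymbol{a} - \alpha \bar{\boldsymbol{a}}$ are non-negative, it lies in $U$, and by the choice of $\alpha$ it has strictly fewer nonzero entries than $\boldsymbol{a}$. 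Writing $\boldsymbol{a} = \alpha \bar{\boldsymbol{a}} + (\boldsymbol{a} - \alpha \bar{\boldsymbol{a}})$ and applying Lemma \ref{conpos}, positive homogeneity of $\varphi$, the hypothesis (2) on $\bar{\boldsymbol{a}}$, and the induction hypothesis on $\boldsymbol{a} - \alpha\bar{\boldsymbol{a}}$, we obtain
\[
\varphi(\boldsymbol{a}) \leq \alpha \varphi(\bar{\boldsymbol{a}}) + \varphi(\boldsymbol{a} - \alpha\bar{\boldsymbol{a}}) \leq \alpha f(\bar{\boldsymbol{a}}) + f(\boldsymbol{a} - \alpha\bar{\boldsymbol{a}}),
\]
and it remains to show $\alpha f(\bar{\boldsymbol{a}}) + f(\boldsymbol{a} - \alpha\bar{\boldsymbol{a}}) \leq f(\boldsymbol{a})$.

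For case (i), since $f$ is concave and positive homogeneous, Lemma \ref{conpos} applied to $-f$ yields the superadditivity $f(x) + f(y) \leq f(x+y)$, which gives exactly the needed inequality. For case (ii), the inequality is in fact an equality, and the key point is that the supports are compatible: on each block $P_i$, the coordinate $j$ is nonzero in $\bar{\boldsymbol{a}}^{i}$ precisely when it is nonzero in $\boldsymbol{a}^{i}$, and $(\boldsymbol{a} - \alpha\bar{\boldsymbol{a}})^{i}$ differs from $\boldsymbol{a}^{i}$ by $\alpha$ on exactly those coordinates. Hence a single permutation simultaneously sorts $\boldsymbol{a}^i$, $\bar{\boldsymbol{a}}^i$, and $(\boldsymbol{a} - \alpha \bar{\boldsymbol{a}})^i$ in decreasing order, so $\boldsymbol{a}^{i}_{\downarrow j} = \alpha\, \bar{\boldsymbol{a}}^{i}_{\downarrow j} + (\boldsymbol{a} - \alpha\bar{\boldsymbol{a}})^{i}_{\downarrow j}$ for each $i,j$, and summing against $\alpha_{ij}$ yields the identity.

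The main obstacle, as in the proof of Theorem \ref{equiv0}, is the verification of the sorting compatibility in case (ii); here the situation is actually simpler than in the $\{0,\pm 1\}$ setting because all entries have the same (non-negative) sign, so no cancellation between $\alpha \bar{\boldsymbol{a}}$ and $\boldsymbol{a} - \alpha\bar{\boldsymbol{a}}$ can disturb the ordering. The rest of the argument is a direct transcription of the previous proof with ``$\operatorname{sgn}$'' replaced by ``indicator of the support''.
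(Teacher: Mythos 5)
Your proposal is correct and is exactly the argument the paper intends: the paper gives no separate proof of Theorem \ref{equilinear}, stating only that it is proved ``same as'' Theorem \ref{equiv0}, and your transcription (replacing $\operatorname{sgn}$ by the indicator of the support, with the observation that nonnegativity keeps $\boldsymbol{a}-\alpha\bar{\boldsymbol{a}}$ in $U$ and preserves a common sorting permutation for case (ii.)) is precisely that adaptation. No gaps.
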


\begin{theorem}[]\label{equimat}
Let $n\in \mathbb{N}$, $P\subseteq [n]\times [n]$, and $V=\{\displaystyle\sum_{(i,j)\in P}\omega_{ij}E_{ij}:\omega_{ij}\geq 0\}$. Suppose that $\psi:V\rightarrow \mathbb{R}$ is a convex and positive homogeneous function. If $f:V\rightarrow \mathbb{R}$ is one of the following functions:
\begin{itemize}
\item[(i.)] A concave and positive homogeneous function,
\item[(ii.)]for given $\{\alpha_{i}\in \mathbb{R}\}$, $f(A)=\sum\limits_{1\leq i\leq n^{2}}\alpha_{i}A_{\downarrow i}$,
\item[(iii.)]for given $\{\alpha_{ij}\in \mathbb{R}\}$, $f(A)=\sum\limits_{j\in [n]}\sum\limits_{i\in [n]}\alpha_{ij}\boldsymbol{r}^{i}_{\downarrow j}$, where $\boldsymbol{r}^{i}$ is the $i^{\rm th}$ row of $A$,
\end{itemize}
then the following statements are equivalent: 
\begin{enumerate}
\item
For every $A\in V$, $\psi(A)\leq f(A)$.
\item
For every $A\in V_{\{0,1\}}$, $\psi(A)\leq f(A)$.
\end{enumerate}
\end{theorem}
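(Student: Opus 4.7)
The plan is to mirror the proof of Theorem~\ref{equiv0} almost verbatim, replacing vectors by matrices and using the $(0,1)$ indicator of the support of $A$ in place of the signed indicator; the nonnegativity of the entries of matrices in $V$ makes this replacement legitimate. The implication $(1)\Rightarrow(2)$ is immediate from $V_{\{0,1\}}\subseteq V$, so the entire content of the theorem is in $(2)\Rightarrow(1)$, which I would prove by strong induction on the number $k$ of nonzero entries of $A\in V$.

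The base case $k=0$ is the zero matrix, which lies in $V_{\{0,1\}}$, and there the bound holds by hypothesis~(2). In the inductive step with $k>0$, I would set $\alpha=\min\{\omega_{ij}:\omega_{ij}>0\}$ and take $\bar{A}\in V_{\{0,1\}}$ to be the support indicator of $A$. Since every positive entry of $A$ is at least $\alpha$, the matrix $A-\alpha\bar{A}$ has nonnegative entries and hence lies in $V$, and it has strictly fewer nonzero entries than $A$. Applying Lemma~\ref{conpos} to the convex, positively homogeneous function $\psi$ on the decomposition $A=\alpha\bar{A}+(A-\alpha\bar{A})$, together with positive homogeneity to extract the factor $\alpha$, gives
\[
\psi(A)\;\le\;\alpha\,\psi(\bar{A})+\psi(A-\alpha\bar{A})\;\le\;\alpha f(\bar{A})+f(A-\alpha\bar{A}),
\]
the second inequality coming from hypothesis~(2) applied to $\bar{A}\in V_{\{0,1\}}$ and from the inductive hypothesis applied to $A-\alpha\bar{A}$.

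The remaining task is to verify $\alpha f(\bar{A})+f(A-\alpha\bar{A})\le f(A)$ for each of the three listed forms of $f$. For~(i), the function $-f$ is convex and positively homogeneous, so Lemma~\ref{conpos} applied to $-f$ gives the superadditivity $f(X+Y)\ge f(X)+f(Y)$, which combined with positive homogeneity of $f$ yields exactly the required inequality. For~(ii) and~(iii) the inequality is actually an equality, and the reason is that since every positive entry of $A$ is at least $\alpha$, a single sort of the entries of $A$ (globally in~(ii), or within each row in~(iii)) induces compatible sorts of $\bar{A}$ and $A-\alpha\bar{A}$: in any such sort, $\bar{A}$ has its ones in the leading positions and $A-\alpha\bar{A}$ has the values $\omega_{ij}-\alpha$ in those same positions, so position-by-position the identity $A_{\downarrow r}=\alpha\bar{A}_{\downarrow r}+(A-\alpha\bar{A})_{\downarrow r}$ (and its row-wise analogue) holds. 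Multiplying by the given coefficients and summing then produces the desired equality, completing the induction. The only point that needs any care is the sort-order consistency in case~(iii), but this is resolved simply by applying the same argument to each row independently.
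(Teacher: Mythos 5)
Your proposal is correct and is essentially the argument the paper intends: Theorem~\ref{equimat} is stated with the remark that its proof is the same as that of Theorem~\ref{equiv0}, namely strong induction on the number of nonzero entries, peeling off $\alpha$ times the $(0,1)$ support indicator, using Lemma~\ref{conpos} and positive homogeneity of $\psi$, and checking $\alpha f(\bar A)+f(A-\alpha\bar A)\le f(A)$ (with equality in cases (ii) and (iii) via a common sorting permutation). Your verification of the sort-compatibility, using that every positive entry is at least $\alpha$ so the zeros stay at the tail of all three sorted sequences, is exactly the point the paper leaves implicit.
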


A natural question is that which functions can be choose as $f$ in Theorems \ref{equiv0}, \ref{equimat2}, \ref{equilinear}, and \ref{equimat}. 
\begin{problem}
Under the same assumptions of Theorems \ref{equiv0} and \ref{equilinear}, what is a characterization of the functions $f:\mathbb{R}^{m}\rightarrow \mathbb{R}$ that the following statements are equivalent: 
\begin{enumerate}
	\item
	For every $\boldsymbol{a}\in U$,  $\varphi(\boldsymbol{a})\leq f(\boldsymbol{a})$.
	\item
	For every $\boldsymbol{a}\in U_{\{0,\pm 1\}}\, ( \boldsymbol{a}\in U_{\{0,1\}})$, $\varphi(\boldsymbol{a})\leq f(\boldsymbol{a})$.
\end{enumerate}
\end{problem}

\section{Weighted Grone-Merris Conjecture}
In this section using theorems of Section 1, we state an equivalent and weighted version of Grone-Merris Conjecture. 

\subsection{Laplacian of weighted graphs}

Suppose that $A=[\omega_{ij}]\in Sym_{n}(\mathbb{R})$ is a zero diagonal matrix. The matrix $A$ is the adjacency matrix of a (weighted) graph of order $n$ such that the weight of edge $i-j$ is $\omega_{ij}$. In this paper we look at every symmetric matrix with zero diagonal as the adjacency matrix of a weighted graph.

The Laplacian matrix of $A$, denoted by $L_{A}$, is $D-A$, where $D$ is a diagonal matrix with diagonal entries $d_{ii}=\sum_{j=1}^{n}\omega_{ij}$. It is well-known that zero is an eigenvalue of $L_{A}$ with corresponding eigenvector $\boldsymbol{j}_{n}$. For a symmetric matrix with nonnegative weights, the Laplacian matrix is positive semi-definite, but if some weights are negative, the Laplacian matrix is not necessary to be positive semi-definite.

\textbf{In this paper we arrange other Laplacian eigenvalues than zero (corresponding to the eigenvector $\boldsymbol{j}_{n}$) decreasingly and denote them by $\mu_{1}(L_{A}),\ldots,\mu_{n-1}(L_{A})$ such that $\mu_{1}(L_{A})\geq \ldots \geq \mu_{n-1}(L_{A})$.
}

More precisely, we have the following lemma:
\begin{lemma}\label{weightlem}
	Let  $A=[\omega_{ij}]$  be an $n\times n$ symmetric  matrix with zero diagonal and $\omega$ be a real number. If  $B=A+\omega (J_n-I_n)$, then for $i\in [n-1]$,
	\[
	\mu_{i}(L_{B})=\mu_{i}(L_{A})+n\omega.
	\]
\end{lemma}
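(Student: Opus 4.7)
The plan is to reduce $L_B$ to $L_A$ plus an explicit correction matrix, and then split $\mathbb{R}^n$ as $\mathbb{R}\boldsymbol{j}_n \oplus \boldsymbol{j}_n^{\perp}$ and see that the correction acts as a scalar on $\boldsymbol{j}_n^{\perp}$.

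First I would compute the degree contribution. Since $J_n - I_n$ has zero diagonal, $B$ still has zero diagonal, so $L_B$ is well-defined as a Laplacian. The row sums of $J_n - I_n$ all equal $n-1$, so the degree matrix of $B$ is $D_B = D_A + \omega(n-1)I_n$. Substituting,
\[
L_B \;=\; D_B - B \;=\; D_A + \omega(n-1)I_n - A - \omega(J_n - I_n)\;=\; L_A + \omega\bigl(nI_n - J_n\bigr).
\]

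Next I would analyze the correction $M := nI_n - J_n$. Its eigenvalues are $0$ on $\boldsymbol{j}_n$ (since $J_n\boldsymbol{j}_n = n\boldsymbol{j}_n$) and $n$ on every vector in $\boldsymbol{j}_n^{\perp}$ (since $J_n\boldsymbol{x}=0$ there). Because $L_A$ is symmetric with $L_A\boldsymbol{j}_n = 0$, the subspace $\boldsymbol{j}_n^{\perp}$ is invariant under $L_A$, and the nonzero Laplacian eigenvalues $\mu_1(L_A)\geq\cdots\geq\mu_{n-1}(L_A)$ are exactly the eigenvalues of the restriction $L_A\bigr|_{\boldsymbol{j}_n^{\perp}}$. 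The same applies to $L_B$: both $L_A$ and $M$ preserve the splitting $\mathbb{R}\boldsymbol{j}_n \oplus \boldsymbol{j}_n^{\perp}$, hence so does $L_B$.

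Restricting to $\boldsymbol{j}_n^{\perp}$, we get
\[
L_B\bigr|_{\boldsymbol{j}_n^{\perp}} \;=\; L_A\bigr|_{\boldsymbol{j}_n^{\perp}} + n\omega\, I\bigr|_{\boldsymbol{j}_n^{\perp}},
\]
so the eigenvalues of $L_B$ on $\boldsymbol{j}_n^{\perp}$ are precisely $\mu_i(L_A) + n\omega$ for $i\in[n-1]$, and this shift preserves the decreasing order. The zero eigenvalue along $\boldsymbol{j}_n$ is shared by $L_A$ and $L_B$ and is excluded from the indexing by convention, which gives $\mu_i(L_B) = \mu_i(L_A) + n\omega$ for each $i\in[n-1]$.

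There is no real obstacle here: the only point requiring (minor) care is confirming that $B$ inherits the zero-diagonal and symmetry properties so that $L_B$ is the Laplacian in the sense defined, and that the convention of skipping the eigenvalue associated with $\boldsymbol{j}_n$ is compatible on both sides (it is, because both $L_A$ and $L_B$ annihilate $\boldsymbol{j}_n$).
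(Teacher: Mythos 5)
Your proof is correct and follows essentially the same route as the paper: both rest on the identity $L_B = L_A + \omega(nI_n - J_n)$ and the observation that $nI_n - J_n$ acts as multiplication by $n$ on $\boldsymbol{j}_n^{\perp}$. The paper simply applies this to a single eigenvector $\boldsymbol{\xi}\perp\boldsymbol{j}_n$, whereas you phrase it via the restriction to the invariant subspace; the extra care you take about the ordering and the shared zero eigenvalue is welcome but not a different argument.
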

\begin{proof}
	Suppose that $\mu_{i}(L_{A})$ is corresponding to the eigenvector  $\boldsymbol{\xi}$ perpendicular to $\boldsymbol{j}_{n}$.
	So, $L_{B}\boldsymbol{\xi}=L_{A}\boldsymbol{\xi}+\omega L_{(J_n-I_n)}\boldsymbol{\xi}=(\mu_{i}(L_{A})+n\omega)\boldsymbol{\xi}$.
\end{proof}

We have for $k\in [n-1]$, $$\sum_{i=1}^{k}\mu_{i}(L_{A})=\max_{\substack{U^{T}U=I_{k}\\U^{T}\boldsymbol{j}_{n}=\boldsymbol{0}} }tr(U^{T}L_{A}U)$$ 
and it is easy to see that $\psi(A)=\sum_{i=1}^{k}\mu_{i}(L_{A})$ is a convex function on  $Sym_{n}(\mathbb{R})$ (see \cite[Theporem 3.4]{cpw}).

For two vectors $\boldsymbol{x},\boldsymbol{y} \in \mathbb{R}^{n}$, we say that $\boldsymbol{y}$ majorizes  $\boldsymbol{x}$, written as $\boldsymbol{x} \prec \boldsymbol{y}$, if and only if
\[
\sum_{i=1}^{k} x_{\downarrow i} \leq \sum_{i=1}^{k} y_{\downarrow i} \quad  k\in [n-1],\qquad \sum_{i=1}^{n} x_{\downarrow i}=\sum_{i=1}^{n} y_{\downarrow i}.
\]

\begin{example}
	Let  $A=[\omega_{ij}]$  be an $n\times n$ symmetric  matrix with zero diagonal. If 
	$\omega_{1}\geq \omega_{2}\geq\cdots \geq\omega_{n^2 -n}$
	is the decreasing order of $(\omega_{ij})_{i,j,i\ne j}$ , then we show that
\[(\mu_{1}(L_{A}),\ldots,\mu_{n-1}(L_{A}))\prec (\displaystyle\sum_{i=1}^{n}\omega_{i},\displaystyle\sum_{i=n+1}^{2n}\omega_{i},\ldots,\displaystyle\sum_{i=n^{2}-2n+1}^{n^2 -n}\omega_{i}).\]
We have $\sum_{i=1}^{n-1}\mu_{i}(L_{A})=Tr(L_{A})=\sum_{i=1}^{n^2 -n}\omega_i$. So, to see other inequalities, it is sufficient to show that $\sum_{i=1}^{k}\mu_{i}(L_{A})\leq \sum_{j=1}^{k} \sum_{i=(j-1)n+1}^{jn} \omega_i=\sum_{i=1}^{kn} \omega_i$ for $k\in[n-1]$. By Lemma \ref{weightlem}, without loss of generality, we suppose that all weights are nonnegative. By Theorem \ref{equimat}, function (ii.), it is sufficient to show inequality for $(0,1)$ matrices. We know for simple graphs, if $2e(A)=qn+r$, $0\leq r\leq n-1,\,0\leq q$, then
\[(\mu_{1}(L_{A}),\ldots,\mu_{n-1}(L_{A}))\prec (n,\ldots,n,r,0,\ldots,0).\] 
Hence $\sum_{i=1}^{k}\mu_{i}(L_{A})\leq \sum_{j=1}^{k} \sum_{i=(j-1)n+1}^{jn} \omega_i=\sum_{i=1}^{kn} \omega_i$ is true and we have done.
\end{example}

\subsection{Young Tableau of Weights and Conjugate Degrees Sequence}
Suppose that  $A=[\omega_{ij}]$  is an $n\times n$ symmetric matrix with zero diagonal. Assume that for every $i\in [n]$,
	$\omega_{\downarrow i1}\geq \omega_{\downarrow i2}\geq\cdots \geq\omega_{\downarrow i(n-1)}$
is the decreasing order of  $(\omega_{ij})_{j\ne i}$ (the row $i$ of $A$ without diagonal entry). So, $d_{i}=\sum_{j=1}^{n}\omega_{ij}=\sum_{j=1}^{n-1}\omega_{\downarrow ij}$ is the degree of vertex $i$, for $i\in [n]$. We define the following Young tableau of these weights:
{\renewcommand{\arraystretch}{1.5}

\begin{table}[H]
$
\begin{blockarray}{c|cccc}
\text{SUM} &  \displaystyle\sum_{i=1}^{n}\omega_{\downarrow i(n-1)} & \cdots  & \displaystyle\sum_{i=1}^{n}\omega_{\downarrow i2}  & \displaystyle\sum_{i=1}^{n}\omega_{\downarrow i1} \\\cline{1-5}
	\begin{block}{c|c|c|c|c|}
	d_1 &\omega_{\downarrow 1(n-1)} &\cdots&\omega_{\downarrow 12}& \omega_{\downarrow 11} \\\hhline{~====}
	d_2  & \omega_{\downarrow 2(n-1)}&\cdots&\omega_{\downarrow 22}& \omega_{\downarrow 21}\\\hhline{~====}
		\vdots& \vdots  &\ddots  &\vdots&\vdots\\\hhline{~====}
		d_{n-1} & \omega_{\downarrow (n-1)(n-1)} &\cdots&\omega_{\downarrow (n-1)2}& \omega_{\downarrow (n-1)1}\\\hhline{~====}
		d_n &  \omega_{\downarrow n(n-1)} &\cdots&\omega_{\downarrow n2}& \omega_{\downarrow n1}  \\\cline{2-5}
	\end{block}
\end{blockarray}
$
\caption{Conjugate of the Weighted Degree Sequence}
\end{table}
}
In this tableau, define the conjugate of the degree sequence by $d_{j}^{*}= \displaystyle\sum_{i=1}^{n}\omega_{\downarrow ij}$, for $j\in [n-1]$.
This is a weighted version of the conjugate of the degree sequence. One can see easily that this version of $d_{j}^{*}$ for $(0,1)$ weights is $|\{i:\,d_i\geq j\}|$.
\begin{example}
	Suppose that
	\[
	A=\begin{pmatrix}
	0& -1 & 2.1 & 0 \\ 
	-1& 0 & -4 & 8 \\ 
	2.1& -4 & 0 & -7 \\ 
	0& 8 & -7  & 0
	\end{pmatrix}.
	\]
	Therefore, we have the following tableau:
\[
\begin{blockarray}{c|ccc}
\text{SUM} &  -19 &  -5  & 20.2 \\\cline{1-4}
\begin{block}{c|c|c|c|}
1.1 &-1 &0 &2.1\\\cline{2-4}
3  & -4&-1& 8\\\cline{2-4}
-8.9 & -7&-4& 2.1\\\cline{2-4}
1 &  -7&0&8  \\\cline{2-4}
\end{block}
\end{blockarray}\,.
\]	
	
\end{example}

By definitions above and theorems of Section 1, we state the following weighted version of Grone-Merris Conjecture. 
\begin{theorem}\label{ggm}
	Let  $A=[\omega_{ij}]$  be an $n\times n$ symmetric matrix with zero diagonal. Then
	\[
(\mu_{1}(L_{A}),\ldots,\mu_{n-1}(L_{A}))\prec (d_{1}^{*},\ldots,d_{n-1}^{*}).
	\]
\end{theorem}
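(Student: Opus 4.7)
The plan is to reduce Theorem \ref{ggm} to the classical Grone--Merris conjecture for simple graphs, which is proved in \cite{ba}, by means of two successive reductions: first from arbitrary weights to nonnegative weights via Lemma \ref{weightlem}, and then from nonnegative weights to $(0,1)$ weights via Theorem \ref{equimat}. The equality part of the majorization is immediate:
\[
\sum_{i=1}^{n-1} \mu_i(L_A) \;=\; \mathrm{tr}(L_A) \;=\; \sum_{i=1}^n d_i \;=\; \sum_{i=1}^n \sum_{j=1}^{n-1} \omega_{\downarrow ij} \;=\; \sum_{j=1}^{n-1} d_j^{*},
\]
and both sequences are already in decreasing order (the second because $\omega_{\downarrow ij}$ is nonincreasing in $j$ for each $i$). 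So it remains to prove, for each $k\in [n-1]$, the partial-sum inequality $\psi_k(A)\leq f_k(A)$, where $\psi_k(A):=\sum_{i=1}^{k}\mu_i(L_A)$ and $f_k(A):=\sum_{j=1}^{k}d_j^{*}$.

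I would then verify that $\psi_k$ is convex and positive homogeneous on symmetric zero-diagonal matrices: convexity was recorded in the paragraph following Lemma \ref{weightlem}, and positive homogeneity follows from $L_{\alpha A}=\alpha L_A$ for $\alpha>0$. To pass to the nonnegative case, set $\omega:=\max\{0,-\min_{i\neq j}\omega_{ij}\}$ and $B:=A+\omega(J_n-I_n)$; by Lemma \ref{weightlem} one has $\psi_k(B)=\psi_k(A)+kn\omega$, and since each off-diagonal entry shifts by the constant $\omega$, the sorted rows shift by the same amount, so $d_j^{*}(B)=d_j^{*}(A)+n\omega$ and $f_k(B)=f_k(A)+kn\omega$. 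The desired inequality is therefore invariant under this shift, and I may assume $A$ has nonnegative off-diagonal entries.

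With $A$ nonnegative, the zero on the diagonal is tied for the smallest entry of each row, so for $k\leq n-1$ the top $k$ entries of row $\boldsymbol{r}^i$ coincide with $(\omega_{\downarrow i1},\dots,\omega_{\downarrow ik})$; hence
\[
f_k(A) \;=\; \sum_{i=1}^{n}\sum_{j=1}^{k}\boldsymbol{r}^i_{\downarrow j},
\]
which fits exactly the form (iii.) in Theorem \ref{equimat} with $\alpha_{ij}=1$ for $j\leq k$ and $\alpha_{ij}=0$ otherwise. Applying Theorem \ref{equimat} within the subspace of symmetric zero-diagonal matrices reduces the claim to verifying $\psi_k(A)\leq f_k(A)$ for $A\in V_{\{0,1\}}$, i.e.\ for $A$ the adjacency matrix of a simple graph; and there the inequality is precisely the classical Grone--Merris inequality of \cite{ba}.

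The step I expect to require the most care is the symmetric-subspace version of Theorem \ref{equimat}: its inductive reduction replaces $A$ by the sign-pattern matrix $\bar A$ with $\bar A_{ij}=\mathrm{sgn}(\omega_{ij})$, and I would check that this operation preserves both symmetry (as $\omega_{ij}=\omega_{ji}$ forces $\mathrm{sgn}(\omega_{ij})=\mathrm{sgn}(\omega_{ji})$) and the zero diagonal, so the entire induction stays inside the class of adjacency matrices of weighted simple graphs. Once this is observed, no additional argument is needed beyond the results already quoted.
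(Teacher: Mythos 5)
Your proposal follows essentially the same route as the paper's own proof: the equality of total sums via the trace, the shift to nonnegative weights via Lemma \ref{weightlem} (with both sides changing by $kn\omega$), the reduction to $(0,1)$ matrices via Theorem \ref{equimat} with the function of type (iii.), and the final appeal to Bai's proof of the Grone--Merris conjecture. Your additional checks --- that the zero diagonal entry does not disturb the identification of $d_j^{*}$ with the row-sorted sums for $k\leq n-1$, and that the sign-pattern induction stays inside the symmetric zero-diagonal subspace --- are details the paper leaves implicit, but the argument is the same.
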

\begin{proof}
Assume that $k\in [n-1]$. We must show $\sum_{j=1}^{k}\mu_{j}(L_{A})\leq \sum_{j=1}^{k}d_{j}^{*}= \sum_{j=1}^{k}\sum_{i=1}^{n}\omega_{\downarrow ij}$. 
From Lemma \ref{weightlem}, if we shift the entries of $A$ by $\omega$, two sides of inequality above change $kn\omega$. Thus without loss of generality, we suppose that $A$ is nonnegative. 
By Theorem \ref{equimat}, function (iii.), it is sufficient to show the inequality for $(0,1)$ matrices. For simple graphs Grone-Merris Conjecture is proved \cite{ba}. 
Hence $\sum_{i=1}^{k}\mu_{i}(L_{A})\leq \sum_{j=1}^{k}d_{j}^{*}$ is true and we have done.
\end{proof}

\begin{corollary}\label{ggmstar}
	Let $t,n\in\mathbb{N}$ and $S_{n}$  be a weighted star. If  
	$\omega_{1}\geq\cdots \geq  \omega_{t}>0\geq\omega_{t+1}\geq\cdots\geq \omega_{n-1}$
	is the decreasing order of the weights of edges, then \[(\mu_{1}(L_{S_{n}}),\ldots,\mu_{n-1}(L_{S_{n}}))\prec ((\sum_{i=1}^{t}\omega_{i})+\omega_{1},\omega_{2},\ldots,\omega_{n-2},(\sum_{i=t+1}^{n-1}\omega_{i})+\omega_{n-1})\]
\end{corollary}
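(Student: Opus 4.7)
The plan is to apply Theorem \ref{ggm} directly to the weighted star $S_n$; the only work is to identify the weighted conjugate degree sequence explicitly and confirm that it matches the tuple on the right-hand side of the claimed majorization.

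Label the center as vertex $0$ and the leaves as $1,\ldots,n-1$, and let $\omega_i$ be the weight of the edge from the center to leaf $i$. I would write out the Young tableau of weights described in Section~3.2. The center's row contains all $n-1$ edge weights, already sorted: $(\omega_1,\omega_2,\ldots,\omega_{n-1})$. Each leaf $i$ contributes a row with a single nonzero entry $\omega_i$ and $n-2$ zeros; after sorting decreasingly this row is $(\omega_i,0,\ldots,0)$ if $\omega_i>0$, all zeros if $\omega_i=0$, and $(0,\ldots,0,\omega_i)$ if $\omega_i<0$.

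Summing down each column of the tableau gives $d_j^{*}$. In column $1$ only the leaves with positive weight contribute beyond the center, so
\[
d_1^{*}=\omega_1+\sum_{i=1}^{t}\omega_i.
\]
In columns $2,\ldots,n-2$ only the center row contributes, so $d_j^{*}=\omega_j$ for $2\le j\le n-2$. In column $n-1$ only the leaves with negative weight contribute beyond the center, giving
\[
d_{n-1}^{*}=\omega_{n-1}+\sum_{i=t+1}^{n-1}\omega_i.
\]
This tuple is precisely the right-hand side of the corollary, and it is already in decreasing order since $\sum_{i=1}^{t}\omega_i\ge 0\ge\sum_{i=t+1}^{n-1}\omega_i$ and $\omega_1\ge\cdots\ge\omega_{n-1}$. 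Theorem \ref{ggm} then immediately yields the asserted majorization.

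There is no real obstacle here: the corollary is essentially the specialization of the weighted Grone--Merris theorem to a star. The only care required is in the bookkeeping for the sorted rows, specifically recognizing that negative edge weights end up in the last column of each leaf's row rather than the first, which is why the formulas for $d_1^{*}$ and $d_{n-1}^{*}$ are asymmetric and why the split index $t$ appears naturally in the answer.
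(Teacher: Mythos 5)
Your proposal is correct and follows exactly the paper's own argument: compute the weighted conjugate degree sequence $d_1^{*}=\omega_1+\sum_{i=1}^{t}\omega_i$, $d_j^{*}=\omega_j$ for $2\le j\le n-2$, $d_{n-1}^{*}=\omega_{n-1}+\sum_{i=t+1}^{n-1}\omega_i$, and invoke Theorem \ref{ggm}. Your extra bookkeeping about where negative leaf weights land after sorting each row is exactly the right (and only) point of care, and your check that the resulting tuple is already decreasing is a harmless bonus since majorization is defined via the sorted rearrangement anyway.
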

\begin{figure}[H]
	\centering
	\definecolor{xdxdff}{rgb}{0.49019607843137253,0.49019607843137253,1.}
	\definecolor{ududff}{rgb}{0.30196078431372547,0.30196078431372547,1.}
	\begin{tikzpicture}[scale=.9,line cap=round,line join=round,>=triangle 45,x=1.0cm,y=1.0cm]
	\draw [line width=1.pt] (2.,3.)-- (0.,1.);
	\draw [line width=1.pt] (2.,3.)-- (1.,1.);
	\draw [line width=1.pt] (2.,3.)-- (3.,1.);
	\draw [line width=1.pt] (2.,3.)-- (4.,1.);
	\draw (1.6,1.66) node[anchor=north west] {$\cdots$};
	\begin{scriptsize}
	\draw [fill=ududff] (2.,3.) circle (1.5pt);
	\draw [fill=xdxdff] (0.,1.) circle (1.5pt);
	\draw [fill=ududff] (1.,1.) circle (1.5pt);
	\draw [fill=ududff] (3.,1.) circle (1.5pt);
	\draw [fill=ududff] (4.,1.) circle (1.5pt);
	\draw[color=black] (.45,1.7) node {$\omega_{1}$};
	\draw[color=black] (1.65,1.7) node {$\omega_{2}$};
	\draw[color=black] (2.51,1.7) node {$\omega_{n-2}$};
	\draw[color=black] (3.77,1.7) node {$\omega_{n-1}$};
	\end{scriptsize}
	\end{tikzpicture}
	\caption{A Weighted Star}
\end{figure}
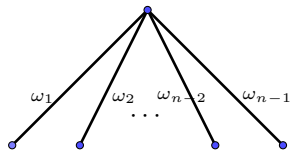
\begin{proof} 
	By Theorem \ref{ggm}, we have $d_{1}^{*}=(\sum_{i=1}^{t}\omega_{i})+\omega_{1}$, $d_{2}^{*}=\omega_{2}$,\ldots, $d_{n-1}^{*}=(\sum_{i=t+1}^{n-1}\omega_{i})+\omega_{n-1}$ and we have done.	
\end{proof}
\begin{remark}
	One can see that $\chi(\mu)=\Pi_{i=1}^{n-1}(\mu-\omega_i)\left(\mu-(\sum_{i=1}^{n-1}\omega_{i})-\sum_{i=1}^{n-1}\frac{\omega_{i}^{2}}{\mu-\omega_i}\right)$ is the characteristic polynomial of $L_{S_{n}}$. If $\mu_{1}\geq \cdots\geq \mu_{n}$ are the roots of $\chi$, then by interlacing (Theorem \ref{int}) we have $\mu_{1}\geq \omega_1\geq \mu_2 \geq \omega_2\geq \cdots\geq\omega_{n-1}\geq \mu_{n}$. By Corollary \ref{ggmstar} we obtain some useful inequalities for the roots of $\chi$. 
\end{remark}

\section{Weighted Brouwer's Conjecture}
For this section we need the definition of \textit{unordered majorization} \cite[14.E.6]{moa}. For two vectors $\boldsymbol{x},\boldsymbol{y} \in \mathbb{R}^{n}$, we say that $\boldsymbol{y}$ unordered majorizes  $\boldsymbol{x}$, written as $\boldsymbol{x} \trianglelefteq \boldsymbol{y}$, if and only if
\[
\sum_{i=1}^{k} x_{i} \leq \sum_{i=1}^{k} y_{i} \quad  k\in [n-1],\qquad \sum_{i=1}^{n} x_{i}=\sum_{i=1}^{n} y_{i}.
\]

The following conjecture states a weighted version of Brouwer's Conjecture.
\begin{conjecture}\label{gbrouwer}
	Let  $A=[\omega_{ij}]$  be an $n\times n$ symmetric nonnegative matrix with zero diagonal. If 
	$\omega_{1}\geq \omega_{2}\geq\cdots \geq\omega_{\binom{n}{2}}\geq 0$
	is the decreasing order of $(\omega_{ij})_{i<j}$, then 
	\[
	(\mu_{1}(L_{A}),\ldots,\mu_{n-1}(L_{A}))\trianglelefteq (e(A)+\omega_{1},\sum_{i=\binom{2}{2}+1}^{\binom{3}{2}}\omega_{i},\ldots,\sum_{i=\binom{k}{2}+1}^{\binom{k+1}{2}}\omega_{i},\ldots,\sum_{i=\binom{n-1}{2}+1}^{\binom{n}{2}}\omega_{i}).
	\]
\end{conjecture}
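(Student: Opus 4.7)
The plan is to run exactly the reduction strategy used in Theorem \ref{ggm}: verify that the left-hand side defines a convex, positive homogeneous matrix parameter, rewrite the right-hand side in a form covered by Theorem \ref{equimat}, reduce to $(0,1)$-matrices, and there invoke Brouwer's Conjecture for simple graphs. First I would note that the required unordered majorization amounts to the family of partial-sum inequalities
\[
\sum_{j=1}^{k}\mu_{j}(L_{A})\ \leq\ e(A)+\sum_{i=1}^{\binom{k+1}{2}}\omega_{i},\qquad k\in[n-1],
\]
together with equality at $k=n-1$; the equality is automatic from $\sum_{j=1}^{n-1}\mu_{j}(L_{A})=\operatorname{tr}(L_{A})=2e(A)$ and $\omega_{1}+\cdots+\omega_{\binom{n}{2}}=e(A)$.

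Next I would fix $k$ and set $\psi(A):=\sum_{j=1}^{k}\mu_{j}(L_{A})$. The discussion following Lemma \ref{weightlem} (citing \cite{cpw}) together with $L_{\alpha A}=\alpha L_{A}$ shows that $\psi$ is convex and positive homogeneous on the cone of symmetric nonnegative matrices with zero diagonal. To put the right-hand side into the form of function (ii) in Theorem \ref{equimat} I use that for such an $A$ each off-diagonal entry appears twice among the sorted values $A_{\downarrow 1}\geq\cdots\geq A_{\downarrow n^{2}}$, so
\[
e(A)=\tfrac{1}{2}\sum_{i=1}^{n^{2}}A_{\downarrow i},\qquad \sum_{i=1}^{\binom{k+1}{2}}\omega_{i}=\tfrac{1}{2}\sum_{i=1}^{2\binom{k+1}{2}}A_{\downarrow i},
\]
whence the right-hand side equals $\sum_{i}\alpha_{i}A_{\downarrow i}$ with $\alpha_{i}=1$ for $i\leq 2\binom{k+1}{2}$ and $\alpha_{i}=\tfrac{1}{2}$ afterwards, which is exactly the shape of function (ii).

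With $\psi$ convex and positive homogeneous and the right-hand side of type (ii), Theorem \ref{equimat} reduces the inequality to the restriction to $V_{\{0,1\}}$. In our symmetric zero-diagonal setting $V_{\{0,1\}}$ is precisely the set of adjacency matrices of simple graphs, and the inequality for such $A$ reads
\[
\sum_{j=1}^{k}\mu_{j}(L_{G})\ \leq\ e(G)+\min\!\bigl(\tbinom{k+1}{2},e(G)\bigr)\ \leq\ e(G)+\tbinom{k+1}{2},
\]
which is exactly Brouwer's Conjecture \ref{brouwer}.

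The main obstacle is therefore not the reduction step, which is a direct transcription of the Grone--Merris argument, but the $(0,1)$ base case: Brouwer's Conjecture is itself open in general, so this strategy establishes Conjecture \ref{gbrouwer} only up to — and in fact equivalently to — the unweighted Brouwer Conjecture. A secondary subtlety, which I would handle in passing, is that Theorem \ref{equimat} is stated for a cone with independent coordinates $\omega_{ij}$, whereas here we want symmetric matrices; this is harmless because the inductive decomposition $A=\alpha\bar{A}+(A-\alpha\bar{A})$ used in the proof of Theorem \ref{equiv0} preserves symmetry (both $\bar{A}$ and $A-\alpha\bar{A}$ are symmetric if $A$ is), and the additive identity $\alpha f(\bar{A})+f(A-\alpha\bar{A})=f(A)$ needed for case (ii) survives because subtracting $\alpha$ on the support does not change the sorted order of the nonzero entries.
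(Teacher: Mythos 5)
The statement you are asked to prove is stated in the paper as a \emph{conjecture}, and the paper offers no proof of it; the closest the paper comes is the remark following Conjecture \ref{strongmaj}, which runs essentially your reduction (shift via Lemma \ref{weightlem}, then Theorem \ref{equimat}, function (ii.)) to show that the weighted statement is \emph{equivalent} to Brouwer's Conjecture for simple graphs, not that it is true. Your proposal does the same thing: the convexity and positive homogeneity of $\psi(A)=\sum_{j=1}^{k}\mu_{j}(L_{A})$, the rewriting of $e(A)+\sum_{i=1}^{\binom{k+1}{2}}\omega_{i}$ as $\sum_{i}\alpha_{i}A_{\downarrow i}$ with $\alpha_{i}\in\{1,\tfrac12\}$, and the observation that the decomposition $A=\alpha\bar{A}+(A-\alpha\bar{A})$ stays inside the symmetric zero-diagonal cone are all correct, and you rightly note that since $A$ is assumed nonnegative here the shift step is not even needed. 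But the base case you land on is Brouwer's Conjecture, which is open. Reducing an open conjecture to another open conjecture is a legitimate and useful observation (and you state it candidly), but it is not a proof, and that is precisely why the paper leaves this statement as Conjecture \ref{gbrouwer} rather than a theorem. This is the genuine gap: no argument you give, nor any in the paper, establishes the $(0,1)$ case.

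Two smaller points. First, your phrase ``which is exactly Brouwer's Conjecture'' is slightly imprecise: for a $(0,1)$ matrix the required bound is $\sum_{j=1}^{k}\mu_{j}(L_{G})\leq e(G)+\min\bigl(\binom{k+1}{2},e(G)\bigr)$, which is formally \emph{stronger} than Brouwer's bound; it is equivalent to it only after one adds the trivial inequality $\sum_{j=1}^{k}\mu_{j}(L_{G})\leq 2e(G)$, which disposes of the regime $e(G)<\binom{k+1}{2}$. Second, your handling of the symmetry issue in Theorem \ref{equimat} is fine, but it would be cleaner to index by unordered pairs $\{i,j\}$, $i<j$, and apply Theorem \ref{equilinear} directly to the vector of upper-triangular weights, since both $e(A)$ and $\sum_{i=1}^{\binom{k+1}{2}}\omega_{i}$ are already functions of that vector of the form required by case (ii.).
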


If $A$ is a $(0,1)$ matrix, Conjecture \ref{gbrouwer} is Brouwer's Conjecture.

Using Theorem \ref{equimat}, function (ii.), we can state weighted versions of the cases of Brouwer's Conjecture that is proven.
\begin{theorem}\cite{hmt}\label{btree}
	Let  $T$  be a tree of order $n$. For $k\in[n]$, 
	\[
	\sum_{i=1}^{k}\mu_{i}(L_{T})\leq e(T)+2k-1.
	\]
\end{theorem}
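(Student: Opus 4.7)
The inequality is the tree case of Brouwer's Conjecture proved by Haemers, Mohammadian and Tayfeh-Rezaie \cite{hmt}, so a complete proof is available by direct citation. In the spirit of this article, my plan is instead to present it as an application of Theorem \ref{equimat}(ii), which simultaneously yields a nonnegative-weighted analogue.

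Fix the combinatorial tree $T$ and let $V = \{A = [\omega_{ij}] \in Sym_{n}(\mathbb{R}) : \omega_{ij} \geq 0, \; \omega_{ij} = 0 \text{ whenever } ij \notin E(T)\}$. By the variational formula recalled in Section 3.1, $\psi(A) := \sum_{i=1}^{k} \mu_i(L_A)$ is convex and positively homogeneous on $V$. The scalar $e(A) + 2k - 1$ is not positively homogeneous, but every nonzero $A \in V_{\{0,1\}}$ satisfies $A_{\downarrow 1} = A_{\downarrow 2} = 1$ (the two symmetric entries of any edge), which suggests the homogeneous substitute
\[
f(A) := e(A) + \frac{2k-1}{2}\bigl(A_{\downarrow 1} + A_{\downarrow 2}\bigr),
\]
a function of the form appearing in item (ii) of Theorem \ref{equimat} that agrees with $e(A) + 2k - 1$ on every nonempty $(0,1)$ subforest of $T$.

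It therefore suffices to verify $\psi \leq f$ on $V_{\{0,1\}}$. Each such matrix is the adjacency matrix of some subforest $F \subseteq T$; the Laplacian $L_F$ decomposes blockwise over the components $F_s$, so the top $k$ eigenvalues of $L_F$ split as the top $k_s$ eigenvalues of the individual $L_{F_s}$ with $\sum_{s} k_s = k$. Applying the tree result of \cite{hmt} to each $F_s$ with $k_s \geq 1$ and summing yields $\psi(A) \leq e(F) + 2k - c'$, where $c' \geq 1$ counts the contributing components; this is at most $e(F) + 2k - 1 = f(A)$. Theorem \ref{equimat}(ii) then promotes the inequality to all of $V$, and specialization to the $(0,1)$ adjacency matrix of $T$ recovers the stated bound.

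The one nontrivial ingredient is the tree result of \cite{hmt} itself, which I would treat as a black box. A naive leaf-removal induction (delete a pendant edge of weight $\omega$, apply Theorem \ref{summaj} to the rank-one update $\omega(\boldsymbol{e}_u - \boldsymbol{e}_v)(\boldsymbol{e}_u - \boldsymbol{e}_v)^{T}$ whose only nonzero eigenvalue is $2\omega$, and invoke the inductive hypothesis) yields only $e(T) + 2k$, which is one unit too many; closing this gap is the combinatorial heart of \cite{hmt} and is where the more delicate interlacing estimates there are essential.
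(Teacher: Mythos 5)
Your proposal is correct and takes essentially the same approach as the paper: Theorem \ref{btree} appears there purely as a cited result of \cite{hmt} with no independent proof, so direct citation is the intended argument, and you rightly flag that your homogenization machinery cannot reprove the unweighted statement non-circularly since \cite{hmt} remains the black box. Your supplementary derivation via Theorem \ref{equimat}(ii) (component splitting on $(0,1)$ subforests, then promotion to nonnegative weights) is sound and is essentially the paper's own route to the weighted analogue in Theorem \ref{gbtree}, differing only in the choice of homogeneous majorant --- your $e(A)+(2k-1)A_{\downarrow 1}$ versus the paper's sharper $e(A)+\sum_{i=1}^{2k-1}\omega_i$.
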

 The following theorem is a weighted version for trees.
\begin{theorem}\label{gbtree}
	Let  $T$  be a weighted tree. If 
	$\omega_{1}\geq \omega_{2}\geq\cdots \geq\omega_{n-1}\geq 0$ is the decreasing order of the weights of edges and $\omega_{n}=\cdots=\omega_{2n-1}=0$, then 
	\[
	(\mu_{1}(L_{T}),\ldots,\mu_{n}(L_{T}))\trianglelefteq (e(T)+\omega_{1},\omega_{2}+\omega_{3},\ldots,\omega_{2k-2}+\omega_{2k-1},\ldots,\omega_{2n-2}+\omega_{2n-1}).
	\]
\end{theorem}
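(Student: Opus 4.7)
The plan is to follow the template of Theorem~\ref{ggm} and reduce the weighted statement to an unweighted one via Theorem~\ref{equimat} applied to a bound of type (ii.). Fix the tree $T$, let $P \subseteq [n]\times[n]$ be the set of ordered pairs corresponding to the edges of $T$, and set $V = \{\sum_{(i,j)\in P}\omega_{ij}E_{ij} : \omega_{ij} = \omega_{ji} \geq 0\}$, the cone of nonnegative symmetric weightings supported on $T$. For each $k \in [n]$, the map $\psi_k(A) = \sum_{i=1}^k \mu_i(L_A)$ is convex and positive homogeneous on $V$, as recorded after Lemma~\ref{weightlem}.

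Next I recast the claimed bound $f_k(A) = e(A) + \omega_1 + \sum_{j=2}^k(\omega_{2j-2}+\omega_{2j-1}) = e(A) + \sum_{p=1}^{2k-1}\omega_p$ as a function of the form (ii.) in Theorem~\ref{equimat}. Because $A$ is symmetric with zero diagonal, every off-diagonal weight appears twice among the matrix entries, which gives $e(A) = \tfrac{1}{2}\sum_{q=1}^{n^2}A_{\downarrow q}$ and $\sum_{p=1}^{2k-1}\omega_p = \tfrac{1}{2}\sum_{q=1}^{4k-2}A_{\downarrow q}$. Hence $f_k(A) = \sum_{q}\alpha_q A_{\downarrow q}$ with $\alpha_q = 1$ for $q \leq 4k-2$ and $\alpha_q = 1/2$ otherwise, so Theorem~\ref{equimat} reduces the desired inequality $\psi_k(A) \leq f_k(A)$ on $V$ to the same inequality on $V_{\{0,1\}}$.

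A matrix in $V_{\{0,1\}}$ is the adjacency matrix of a subforest $F$ of $T$; writing $m = e(F)$, the required bound becomes $\sum_{i=1}^k \mu_i(L_F) \leq m + \min(2k-1, m)$. When $m \leq 2k-1$, the right side is $2m = tr(L_F)$, which trivially dominates the sum of the top $k$ nonnegative eigenvalues. When $m \geq 2k-1$, I extend Theorem~\ref{btree} from trees to forests componentwise: decompose $F = T_1 \dot\cup \cdots \dot\cup T_s$ and let $k_l$ count the top-$k$ eigenvalues of $L_F$ that come from $L_{T_l}$, so $\sum_l k_l = k$; applying the tree bound to each contributing component yields
\[
\sum_{i=1}^k \mu_i(L_F) \;\leq\; \sum_{l:\,k_l\geq 1}\bigl(e(T_l) + 2k_l - 1\bigr) \;\leq\; e(F) + 2k - 1.
\]
The case $k = n$ of the unordered majorization is the trace identity $\sum_{i=1}^n \mu_i(L_T) = 2e(T) = e(T) + \sum_{p=1}^{2n-1}\omega_p$.

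The main obstacle is the forest extension of Theorem~\ref{btree}: the $(0,1)$-reduction supplied by Theorem~\ref{equimat} lands on subforests of $T$, not just on subtrees, so one needs to know that the tree bound $e(\cdot) + 2k - 1$ still holds for disconnected forests. The componentwise argument just sketched handles this, and the key point is that the $-1$ appears only once (because $k \geq 1$ forces at least one component to contribute a top eigenvalue) rather than once per component. Everything else is standard bookkeeping with the Section~2 machinery applied to the already-proven tree case.
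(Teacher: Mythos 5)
Your proposal is correct and follows essentially the same route as the paper: reduce to the $(0,1)$ case via Theorem~\ref{equimat}, function (ii.), split off the trivial trace-bounded regime $2k-1\geq m$, and invoke the known tree bound of Theorem~\ref{btree}. The one place you go beyond the paper is precisely the point you flag as the main obstacle: the paper's proof applies the reduction and then cites the tree case without observing that a matrix in $V_{\{0,1\}}$ supported on $E(T)$ is the adjacency matrix of a \emph{subforest} of $T$, not necessarily a tree, so Theorem~\ref{btree} does not apply verbatim. Your componentwise argument --- applying the bound $e(T_l)+2k_l-1$ only to the components that contribute a top eigenvalue, so that the $-1$ is saved at least once --- is exactly the missing lemma, and it is sound (including the degenerate case of an isolated vertex, where the contributed eigenvalues are zero). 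Your explicit rewriting of the bound as $\sum_q \alpha_q A_{\downarrow q}$ with $\alpha_q=1$ for $q\leq 4k-2$ and $\alpha_q=1/2$ otherwise, using the doubling of off-diagonal entries in a symmetric matrix, is also a worthwhile piece of bookkeeping that the paper leaves implicit. In short: same strategy, but your version closes a real gap in the paper's own one-line justification of the $(0,1)$ step.
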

\begin{proof}
Suppose that $k\in [n]$ and set $V=\{\sum_{ij\in E(T)}\omega_{ij}E_{ij}:\omega_{ij}\geq 0\}$. The function $\sum_{i=1}^{k}\mu_{i}(L_{T})$ is a convex function on $V$. If $2k-1\geq n-1$, then it is obvious that $\sum_{i=1}^{k}\mu_{i}(L_{T})\leq 2e(T)\leq e(T)+ \sum_{i=1}^{2k-1}\omega_i$. If $2k-1 < n-1$, then by Theorem \ref{equimat}, function (ii.), we have 
$\sum_{i=1}^{k}\mu_{i}(L_{T})\leq e(T)+ \sum_{i=1}^{2k-1}\omega_i=2(\sum_{i=1}^{2k-1}\omega_i)+\sum_{i=2k}^{n-1}\omega_i$.
\end{proof}

\subsection{Weight Tableau}
Assume that $A=[\omega_{ij}]$ is an $n\times n$ symmetric matrix with zero diagonal. If 
$\omega_{1}\geq \omega_{2}\geq\cdots \geq\omega_{\binom{n}{2}}$
is the decreasing order of $(\omega_{ij})_{i< j}$, then we define weight tableau $W(A)$ as below:
{\renewcommand{\arraystretch}{1.9}
\begin{equation}\label{wed}
\newcommand*{\temone}{\BAmulticolumn{1}{c|}{\omega_{1}}}
\newcommand*{\temthr}{\BAmulticolumn{1}{c|}{\omega_{3}}}
\newcommand*{\temsix}{\BAmulticolumn{1}{c|}{\omega_{6}}}
\newcommand*{\temn}{\BAmulticolumn{1}{|c}{\omega_{\binom{n}{2}}}}
\newcommand*{\temnmin}{\BAmulticolumn{1}{|c}{\omega_{\binom{n}{2}-2}}}
W(A)=
\begin{blockarray}{ccccccc}
\begin{block}{[ccccccc]}
\temone &\omega_{1} &  \omega_{2}&  \omega_{3}&\cdots&\omega_{n-2}&\omega_{n-1}\\ \cline{2-2}
\omega_{2}  &\temthr &  \omega_{n}&  \omega_{n+1}&\cdots&\omega_{2n-4}&\omega_{2n-3} \\\cline{3-3}
\omega_{4}  &\omega_{5} & \temsix &\ddots & &\vdots &\vdots \\
\vdots  &\vdots& & &\ddots & \vdots&\vdots\\
\omega_{\binom{n-2}{2}+1}&\omega_{\binom{n-2}{2}+2}&\cdots &\cdots & &\temnmin&\omega_{\binom{n}{2}-1}\\\cline{6-6}
\omega_{\binom{n-1}{2}+1}&\omega_{\binom{n-1}{2}+2}&\cdots &\cdots & & \omega_{\binom{n}{2}}&\temn\\ 
\end{block}
\end{blockarray}
\end{equation}
}

It is easy to see that $\sum_{i=1}^{n-1}r_{i}(W(A))=2e(A)$, where $r_{i}(W(A))=\sum_{j=1}^{n}W(A)_{ij}$ is the sum of row $i$, for $i\in [n-1]$. 
\begin{lemma}\label{brouwermaj}
Let  $A=[\omega_{ij}]$  be an $n\times n$ symmetric nonnegative matrix with zero diagonal. If 
$\omega_{1}\geq\cdots \geq\omega_{\binom{n}{2}}$
is the decreasing order of $(\omega_{ij})_{i< j}$, then 
\[
(r_{1}(W(A)),\cdots,r_{n-1}(W(A))) \trianglelefteq (e(A)+\omega_{1},\sum_{i=\binom{2}{2}+1}^{\binom{3}{2}}\omega_{i},\ldots,\sum_{i=\binom{n-1}{2}+1}^{\binom{n}{2}}\omega_{i}).
\]
\end{lemma}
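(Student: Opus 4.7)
The plan is to verify the unordered majorization directly by reading the row sums $r_k(W(A))$ and the right-hand entries off the tableau \eqref{wed} and comparing partial sums. Write $s_1 = e(A) + \omega_1$ and $s_k = \sum_{i = \binom{k}{2}+1}^{\binom{k+1}{2}} \omega_i$ for $k \geq 2$ for the right-hand components. Because every $\omega_i$ with $1 \leq i \leq \binom{n}{2}$ appears in $W(A)$ exactly twice (its total of $n(n-1)$ entries accommodates two copies of each weight), both total sums collapse to $2e(A)$, so only the partial-sum inequalities require real argument.

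First I would telescope the right-hand partial sums to obtain the clean formula $\sum_{k=1}^{j} s_k = e(A) + \sum_{i=1}^{\binom{j+1}{2}} \omega_i$. Next I would split each row of $W(A)$ into its weakly lower-triangular part (columns $1$ through the row index, containing the boxed diagonal entry) and its strict upper-triangular part, and extract the key combinatorial fact from \eqref{wed}: in both parts the weights appear in the globally sorted order $\omega_1, \omega_2, \ldots$, filled row by row. This lets me express the left-hand partial sum as a sum of two initial segments,
\[
\sum_{k=1}^{j} r_k(W(A)) \;=\; \sum_{i=1}^{\binom{j+1}{2}} \omega_i \;+\; \sum_{i=1}^{jn - \binom{j+1}{2}} \omega_i,
\]
where the first piece collects all weakly lower-triangular contributions through row $j$ (row $1$ contributing just $\omega_1$ at position $(1,1)$, and row $k \geq 2$ contributing $\omega_{\binom{k}{2}+1}, \ldots, \omega_{\binom{k+1}{2}}$), while the second collects all strict upper-triangular contributions through row $j$ (the upper part of row $k$ has $n-k$ entries).

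Subtracting then reduces the majorization to the single inequality $jn - \binom{j+1}{2} \leq \binom{n}{2}$, which is the one-line identity $\binom{n-j}{2} \geq 0$ combined with nonnegativity of the $\omega_i$; this is trivially true for every $j \in [n-1]$, with equality at $j = n-1$ (which also matches the total sums $2e(A)$). I expect the only real hurdle to be this indexing step: justifying carefully from the structure of \eqref{wed} that the strict upper-triangular entries really are assigned the consecutive labels $\omega_{(k-1)n - \binom{k}{2} + 1}, \ldots, \omega_{kn - \binom{k+1}{2}}$ in row $k$, and that the weakly lower-triangular entries of row $k$ are $\omega_{\binom{k}{2}+1}, \ldots, \omega_{\binom{k+1}{2}}$. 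Once this bookkeeping is in place, the remainder is a routine binomial computation.
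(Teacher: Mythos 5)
Your proposal is correct and follows essentially the same route as the paper, which simply asserts from the tableau's definition that $\sum_{i=1}^{k}r_{i}(W(A))\leq e(A)+\sum_{i=1}^{\binom{k+1}{2}}\omega_{i}$ for $k\in[n-1]$; you supply the bookkeeping (the split into the staircase part indexed by $\omega_{\binom{k}{2}+1},\ldots,\omega_{\binom{k+1}{2}}$ and the upper part of length $n-k$, plus the identity $\binom{n}{2}-jn+\binom{j+1}{2}=\binom{n-j}{2}\geq 0$) that the paper leaves implicit. No gaps.
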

\begin{proof}
From the definition of the weight tableau \ref{wed}, we have $\sum_{i=1}^{k}r_{i}(W(A))\leq e(A)+\sum_{i=1}^{\binom{k+1}{2}}\omega_{i}$, for $k\in [n-1]$. 
\end{proof}
Now, we state the following conjecture that allows us to extend Brouwer's Conjecture to real weights:

\begin{conjecture}[Weighted Brouwer's Conjecture]\label{strongmaj}
Let  $A=[\omega_{ij}]$  be an $n\times n$ symmetric matrix with zero diagonal. If 
$\omega_{1}\geq \omega_{2}\geq\cdots \geq\omega_{\binom{n}{2}}$
is the decreasing order of $(\omega_{ij})_{i<j}$, then 
\[
(\mu_{1}(L_{A}),\ldots,\mu_{n-1}(L_{A}))\trianglelefteq (r_{1}(W(A)),\ldots,r_{n-1}(W(A))).
\]

\end{conjecture}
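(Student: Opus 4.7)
The plan is to mirror the two-step reduction used in the proofs of Theorems \ref{ggm} and \ref{gbtree}. First, applying Lemma \ref{weightlem} with a shift $\omega$, replacing $A$ by $A + \omega(J_n - I_n)$, increases each $\mu_i(L_A)$ by $n\omega$; since a uniform shift preserves the ordering of the weights $\omega_1,\ldots,\omega_{\binom{n}{2}}$ and each of the $n-1$ rows of $W(A)$ contains exactly $n$ cells, it also increases each $r_i(W(A))$ by $n\omega$. Both $k$-th partial sums of the claimed unordered majorization therefore shift by $kn\omega$, so we may assume without loss of generality that $A$ is nonnegative by choosing $\omega \geq \max_{ij} |\omega_{ij}|$.

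Next I would invoke Theorem \ref{equimat}. The parameter $\psi(A) := \sum_{i=1}^{k}\mu_i(L_A)$ is convex and positively homogeneous on the cone $V = \{A \in Sym_n(\mathbb{R}) : A_{ii} = 0,\ \omega_{ij} \geq 0\}$, by the variational formula $\sum_{i=1}^{k} \mu_i(L_A) = \max\{tr(U^T L_A U) : U^T U = I_k,\ U^T \boldsymbol{j}_n = \boldsymbol{0}\}$ established just before Lemma \ref{weightlem}. On the other side, each cell of $W(A)$ equals one of the sorted weights $\omega_j$, and for nonnegative $A$ with zero diagonal one has $\omega_j = A_{\downarrow 2j}$ for $j \leq \binom{n}{2}$; hence
\[
f(A) := \sum_{i=1}^{k} r_i(W(A)) = \sum_{j=1}^{\binom{n}{2}} c_j A_{\downarrow 2j},
\]
where $c_j \in \{0,1,2\}$ counts the multiplicity of $\omega_j$ in the first $k$ rows of $W(A)$ and depends only on $(n,k,j)$, not on $A$. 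This is a special case of form (ii) in Theorem \ref{equimat}, so the inequality $\psi(A) \leq f(A)$ on $V$ is equivalent to the same inequality restricted to $V_{\{0,1\}}$, i.e., to adjacency matrices of simple graphs.

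The main obstacle is this reduced $(0,1)$ statement. For a simple graph $G$ with $m = e(G)$ edges, the tableau $W(A(G))$ carries a $1$ in the first $m$ weight-positions and $0$ elsewhere, so $\sum_{i=1}^k r_i(W(A(G)))$ becomes an explicit combinatorial function of $(n, m, k)$. By Lemma \ref{brouwermaj} this function is bounded above by $e(G) + \binom{k+1}{2}$, the original Brouwer bound, so establishing Conjecture \ref{strongmaj} even on $V_{\{0,1\}}$ would sharpen Brouwer's Conjecture (Conjecture \ref{brouwer}) — which remains open in general. Consequently the present reduction strategy exhibits Conjecture \ref{strongmaj} as equivalent to a strengthening of Brouwer's Conjecture for simple graphs, and a complete proof appears to require a genuinely new spectral estimate, perhaps a refinement of the Bai argument \cite{ba} that proved Grone--Merris, one which respects the finer staircase structure of $W(A)$ rather than only the conjugate degree sequence.
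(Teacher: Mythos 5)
This statement is a conjecture, and the paper gives no proof of it; the closest thing is the Remark immediately following it, which performs exactly the reduction you describe --- shift via Lemma \ref{weightlem} to nonnegative weights, then apply Theorem \ref{equimat}, function (ii.), to reduce to $(0,1)$ matrices --- and concludes that Conjecture \ref{strongmaj} is equivalent to Brouwer's Conjecture \ref{brouwer}, which remains open. Your reduction is correct and coincides with the paper's, and your conclusion that no complete proof is available is also the paper's position. The one refinement worth noting is that the reduced $(0,1)$ statement is not merely a sharpening of Brouwer's bound but is actually \emph{equivalent} to it: in the regimes where $\sum_{i=1}^{k}r_i(W(A))$ falls below $e(A)+\binom{k+1}{2}$, namely $e(A)<\binom{k+1}{2}$ (where the first $k$ row sums total $2e(A)$) and $e(A)\geq kn-\binom{k+1}{2}$ (where they total $kn$), the required inequality follows trivially from $\sum_{i=1}^{k}\mu_i(L_A)\leq\min\bigl(2e(A),kn\bigr)$.
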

\begin{remark}
Brouwer's Conjecture for simple graphs, Conjecture \ref{brouwer}, and Conjecture \ref{strongmaj} are equivalent: For $k\in [n-1]$, Conjecture \ref{strongmaj} states $\sum_{i=1}^{k}\mu_{i}(L_{A})\leq \displaystyle\sum_{i=1}^{kn-\binom{k+1}{2}}\omega_{i}+\displaystyle\sum_{i=1}^{\binom{k+1}{2}}\omega_{i}$.
From Lemma \ref{weightlem}, if we shift the entries of $A$ by $\min_{i}\omega_i$, two sides of inequality above change $kn\omega$. Thus without loss of generality, we suppose that $A$ is nonnegative. 
By Theorem \ref{equimat}, function (ii.), it is sufficient to show the inequality for $(0,1)$ matrices. For simple graphs, Brouwer's Conjecture is obvious for $e(A)\geq kn-\binom{k+1}{2}$ and for $e(A)< kn-\binom{k+1}{2}$  we have $\sum_{i=1}^{k}r_{i}(W(A))=\displaystyle\sum_{i=1}^{kn-\binom{k+1}{2}}\omega_{i}+\displaystyle\sum_{i=1}^{\binom{k+1}{2}}\omega_{i}=e(A)+\displaystyle\sum_{i=1}^{\binom{k+1}{2}}\omega_{i}$. 
Hence, two conjectures are equivalent.

\end{remark}

\subsection{Optimum Arrangements of Weights of Graphs } A question about these bounds on sum of the eigenvalues of graphs and symmetric matrices is about the matrices and graphs that achieve these bounds.
In other way, we want to know for which (weighted) graphs these bounds are sharp. We state these questions as the following problems.
\begin{problem}[Laplacian Matrix]
	Let $k, n\in\mathbb{N}$, $k<n$, and $\omega_{1}, \omega_{2},\cdots ,\omega_{\binom{n}{2}}$ be given nonnegative real numbers. What is the graph of order $n$ with this weights that have the maximum (or minimum) of $S_{k}(L_{A})=\mu_1(L_{A})+\cdots+\mu_k(L_{A})$?
	
\end{problem}
It seems that for a simple graph $G$, $k<n$, and $\binom{k+1}{2}<e(G)<kn-\binom{k+1}{2}$ the extremal graphs for problem above are  threshold graphs with clique number $k+1$ which achieve the upper bound in Brouwer's Conjecture. 
\begin{problem}[Adjacency Matrix]
	Let $k, n\in\mathbb{N}$, $k<n$, and $\omega_{1}, \omega_{2},\cdots ,\omega_{\binom{n}{2}}$ be given nonnegative real numbers. What is the graph of order $n$ with this weights that have the maximum (or minimum) of $S_{k}(A)=\lambda_1(A)+\cdots+\lambda_k(A)$?
	
\end{problem}

Extremal simple graphs with given $e$ edges for $k=1$, spectral radius, have been considered in recent decades (see \cite{bs},\cite{oad}, and \cite{ro}).

\begin{problem}[Nonsymmetric Matrices: Singular Values]
	Let $k,m, n\in\mathbb{N}$, $k<m,n$, and $\omega_{1}, \omega_{2},\cdots ,\omega_{mn}$ be given real numbers. What is the arrangement of these weights in an $n\times m$ matrix $M$ such that have the maximum (or minimum) sum of the singular values $S_{k}(M)=\sigma_1(M)+\cdots+\sigma_k(M)$?
\end{problem}

\section{Positive semi-definite decomposition}

In this section, we consider a relationship between the spectral decomposition and  any positive semi-definite decomposition of a symmetric matrix.

\begin{lemma}\label{decpos}
Let $t,n\in\mathbb{N}$ and $A\in Sym_{n}(\mathbb{R})$. If $A=\sum_{i=1}^{t}\theta_{i} \boldsymbol{u}_{i}\boldsymbol{u}_{i}^{T}$ for some real vectors $\boldsymbol{u}_{i},\,i\in[t]$.  If $\theta_{i}> 0 \text{ for } i\in[t]$  and $r=\text{rank}(span\{\boldsymbol{u}_{1},\ldots,\boldsymbol{u}_{t}\})$, then $A$ has  exactly $r$ nonzero eigenvalues and $span\{\boldsymbol{u}_{1},\ldots,\boldsymbol{u}_{t}\}=span\{\boldsymbol{\xi}_{1},\ldots,\boldsymbol{\xi}_{r}\}$ for eigenvectors $\boldsymbol{\xi}_{i},\,i\in[r]$, corresponding to  nonzero eigenvalues of $A$.  
\end{lemma}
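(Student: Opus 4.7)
The plan is to package the sum as a single matrix factorization and then read both claims (the number of nonzero eigenvalues and the spanning identity) from the standard relationship between range and eigenspaces of a symmetric matrix.

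First I would collect the vectors into an $n\times t$ matrix $U=[\boldsymbol{u}_{1}\mid\cdots\mid\boldsymbol{u}_{t}]$ and set $\Theta=Diag(\theta_{1},\ldots,\theta_{t})$, so that
\[
A=\sum_{i=1}^{t}\theta_{i}\boldsymbol{u}_{i}\boldsymbol{u}_{i}^{T}=U\Theta U^{T}=(U\Theta^{1/2})(U\Theta^{1/2})^{T},
\]
where $\Theta^{1/2}$ is well defined and invertible because each $\theta_{i}>0$. In particular $A$ is positive semi-definite.

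Next I would compute the rank of $A$. Since $\Theta^{1/2}$ is nonsingular, right-multiplication by it preserves column rank, so $\mathrm{rank}(U\Theta^{1/2})=\mathrm{rank}(U)=r$. Using the standard fact $\mathrm{rank}(BB^{T})=\mathrm{rank}(B)$ for real $B$, this gives $\mathrm{rank}(A)=r$. Because $A$ is symmetric, its rank equals the number of nonzero eigenvalues, so $A$ has exactly $r$ nonzero eigenvalues, which establishes the first assertion.

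For the second assertion, I would work with the column space. From $A=U\Theta U^{T}$ we have the inclusion $\mathrm{range}(A)\subseteq \mathrm{range}(U)=span\{\boldsymbol{u}_{1},\ldots,\boldsymbol{u}_{t}\}$, and since both subspaces have dimension $r$ by the rank computation above, they coincide. On the other hand, for the symmetric matrix $A$ the spectral theorem gives the orthogonal decomposition $\mathbb{R}^{n}=\mathrm{range}(A)\oplus \mathrm{null}(A)$, in which $\mathrm{range}(A)$ is spanned by eigenvectors corresponding to nonzero eigenvalues, i.e. $\mathrm{range}(A)=span\{\boldsymbol{\xi}_{1},\ldots,\boldsymbol{\xi}_{r}\}$. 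Chaining the two equalities yields $span\{\boldsymbol{u}_{1},\ldots,\boldsymbol{u}_{t}\}=span\{\boldsymbol{\xi}_{1},\ldots,\boldsymbol{\xi}_{r}\}$, as required.

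There is no real obstacle here: the only subtlety is making sure the inclusion $\mathrm{range}(A)\subseteq\mathrm{range}(U)$ is promoted to equality, which the dimension count via $\mathrm{rank}(BB^{T})=\mathrm{rank}(B)$ handles cleanly, and this is precisely the place where the positivity assumption $\theta_{i}>0$ (guaranteeing that $\Theta^{1/2}$ exists and is invertible) is actually used.
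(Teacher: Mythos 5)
Your proof is correct, and it takes a genuinely different (though ultimately kindred) route from the paper's. You package the sum as a Gram factorization $A=(U\Theta^{1/2})(U\Theta^{1/2})^{T}$ and lean on the identity $\mathrm{rank}(BB^{T})=\mathrm{rank}(B)$ together with the invertibility of $\Theta^{1/2}$; the rank computation comes first, and the span equality then falls out of the trivial inclusion $\mathrm{range}(A)\subseteq\mathrm{range}(U)$ plus a dimension count. The paper instead identifies the null space directly: for a $0$-eigenvector $\boldsymbol{\xi}$ one has $\boldsymbol{\xi}^{T}A\boldsymbol{\xi}=\sum_{i}\theta_{i}(\boldsymbol{u}_{i}\cdot\boldsymbol{\xi})^{2}=0$, and positivity of the $\theta_{i}$ forces $\boldsymbol{\xi}\perp\boldsymbol{u}_{i}$ for every $i$; combined with the easy converse ($\boldsymbol{v}\perp$ all $\boldsymbol{u}_{i}$ implies $A\boldsymbol{v}=\boldsymbol{0}$), this gives $\ker(A)=\mathrm{span}\{\boldsymbol{u}_{i}\}^{\perp}$ and hence $\mathrm{Im}(A)=\mathrm{span}\{\boldsymbol{u}_{i}\}$, with the rank statement as a byproduct. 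The two arguments use the hypothesis $\theta_{i}>0$ at the same conceptual spot — a sum of nonnegative terms vanishing forces each term to vanish, which is exactly what underlies $\mathrm{rank}(BB^{T})=\mathrm{rank}(B)$ — so yours is essentially a cleaner, more packaged version that also makes the positive semi-definiteness of $A$ explicit, while the paper's is more elementary and self-contained, proving both inclusions by hand without invoking the rank identity.
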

\begin{proof}
Suppose that ${\boldsymbol{\xi}}$ is a $0$-eigenvector of $A$. So, $\boldsymbol{\xi}^{T}A\boldsymbol{\xi}=0$ and $\boldsymbol{\xi}\bullet\boldsymbol{u}_{i}=0$. Therefore 
$span\{\boldsymbol{u}_{1},\ldots,\boldsymbol{u}_{t}\}\subseteq Im(A)$. Also, if $\boldsymbol{v}\in (span\{\boldsymbol{u}_{1},\ldots,\boldsymbol{u}_{t}\})^{\bot}$, then $A\boldsymbol{v}=\boldsymbol{0}$. Thus $span\{\boldsymbol{u}_{1},\ldots,\boldsymbol{u}_{t}\}\supseteq Im(A)$ and hence $span\{\boldsymbol{u}_{1},\ldots,\boldsymbol{u}_{t}\}=span\{\boldsymbol{\xi}_{1},\ldots,\boldsymbol{\xi}_{r}\}$ for eigenvectors $\boldsymbol{\xi}_{i},\,i\in[r]$ corresponding to nonzero eigenvalues of $A$.  
\end{proof}

\begin{theorem}\label{decmaj}
Let $t,n\in\mathbb{N}$ and $A\in Sym_{n}(\mathbb{R})$. Suppose that $A=\sum_{i=1}^{t}\theta_{i} \boldsymbol{u}_{i}\boldsymbol{u}_{i}^{T}$ for some unit real vectors $\boldsymbol{u}_{i},\,i\in[t]$.  If $\theta_{i}> 0,\,i\in[t]$, then for $m=min(n,t)$
\[
(\theta_1,\ldots,\theta_{m})\prec (\lambda_1(A),\ldots,\lambda_{m}(A)).
\]  
\end{theorem}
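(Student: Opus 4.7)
The strategy is a ``peeling off'' of the rank-one summands, combined with Ky Fan's maximum principle (Theorem \ref{trace}). Reorder the indices so that $\theta_1 \geq \theta_2 \geq \cdots \geq \theta_t > 0$, and for each $k \in [m]$ write
\[
A = A_k + B_k, \qquad A_k := \sum_{i=1}^{k}\theta_i \boldsymbol{u}_i \boldsymbol{u}_i^T, \qquad B_k := \sum_{i=k+1}^{t}\theta_i \boldsymbol{u}_i \boldsymbol{u}_i^T,
\]
so that both summands are positive semi-definite (with $B_k = 0$ if $k \geq t$).

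For the partial-sum inequalities, I would argue as follows. For any $U \in M_{n,k}(\mathbb{R})$ with $U^{T}U = I_k$, positivity of $B_k$ gives $\mathrm{tr}(U^{T}B_k U) \geq 0$, so $\mathrm{tr}(U^{T}AU) \geq \mathrm{tr}(U^{T}A_k U)$. Taking the maximum over such $U$ on both sides and applying Theorem \ref{trace} yields $\sum_{i=1}^{k}\lambda_i(A) \geq \sum_{i=1}^{k}\lambda_i(A_k)$. The matrix $A_k$ is a sum of at most $k$ rank-one positive semi-definite terms, so $\mathrm{rank}(A_k) \leq k$ and, since $A_k \geq 0$, its top $k$ eigenvalues exhaust its trace: $\sum_{i=1}^{k}\lambda_i(A_k) = \mathrm{tr}(A_k) = \sum_{i=1}^{k}\theta_i$. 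Combining, $\sum_{i=1}^{k}\lambda_i(A) \geq \sum_{i=1}^{k}\theta_i$ for every $k \in [m]$.

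For the equality at $k = m$ when $t \leq n$ (so $m = t$), Lemma \ref{decpos} gives $\mathrm{rank}(A) \leq t$, hence $\lambda_{t+1}(A) = \cdots = \lambda_n(A) = 0$ (using $A \geq 0$), and therefore $\sum_{i=1}^{m}\lambda_i(A) = \mathrm{tr}(A) = \sum_{i=1}^{t}\theta_i = \sum_{i=1}^{m}\theta_i$. Together with the partial-sum bound this establishes $(\theta_1,\ldots,\theta_m) \prec (\lambda_1(A),\ldots,\lambda_m(A))$.

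The main obstacle is the regime $t > n$ (so $m = n$), in which $\sum_{i=1}^{n}\lambda_i(A) = \mathrm{tr}(A) = \sum_{i=1}^{t}\theta_i > \sum_{i=1}^{n}\theta_i$ whenever $\theta_{n+1},\ldots,\theta_t$ are strictly positive, so the literal equality at the top required by $\prec$ fails. My plan would be to first reduce to $t \leq n$ using Lemma \ref{decpos}: since $\mathrm{span}\{\boldsymbol{u}_1,\ldots,\boldsymbol{u}_t\}$ has dimension $r \leq n$, one may rewrite $A = \sum_{i=1}^{r}\tilde\theta_i \tilde{\boldsymbol{u}}_i \tilde{\boldsymbol{u}}_i^T$ with $r$ unit vectors, and the delicate step would be checking that the new coefficients $(\tilde\theta_i)$ still dominate $(\theta_i)$ at the level of partial sums. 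If instead the symbol $\prec$ in the theorem is intended in the weak-majorization sense (only the first $n{-}1$ inequalities), then the two preceding paragraphs already constitute a complete proof.
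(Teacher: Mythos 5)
Your argument for the partial-sum inequalities is essentially the paper's own proof: both peel off $A_k=\sum_{i=1}^{k}\theta_i\boldsymbol{u}_i\boldsymbol{u}_i^T$, use Lemma \ref{decpos} to get $\mathrm{rank}(A_k)\leq k$, identify $\sum_{i=1}^{k}\theta_i$ with $\mathrm{tr}(A_k)=\sum_{i=1}^{k}\lambda_i(A_k)$, and then use monotonicity of $S_k$ under addition of the positive semi-definite remainder $B_k$ (the paper leaves this last step implicit; your appeal to Theorem \ref{trace} makes it explicit, and your explicit reordering of the $\theta_i$ is a minor tidying the paper omits). So on the inequalities you and the paper coincide.

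Where you go beyond the paper is in worrying about the equality $\sum_{i=1}^{m}\theta_{\downarrow i}=\sum_{i=1}^{m}\lambda_i(A)$ that the definition of $\prec$ requires. The paper's proof establishes only the inequalities and then declares the proof finished; it never addresses the trace equality. Your treatment of the case $t\leq n$ (rank $\leq t$ plus positive semi-definiteness forces $\lambda_{t+1}(A)=\cdots=\lambda_n(A)=0$, so $\sum_{i=1}^{t}\lambda_i(A)=\mathrm{tr}(A)=\sum_{i=1}^{t}\theta_i$) correctly fills that gap. And your objection in the case $t>n$ is genuine: there $\sum_{i=1}^{n}\lambda_i(A)=\mathrm{tr}(A)=\sum_{i=1}^{t}\theta_i>\sum_{i=1}^{n}\theta_{\downarrow i}$ whenever the discarded $\theta_i$ are strictly positive, so the literal statement with the paper's definition of $\prec$ fails; only weak majorization survives. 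Your proposed repair---re-expressing $A$ over a basis of $\mathrm{span}\{\boldsymbol{u}_1,\ldots,\boldsymbol{u}_t\}$ with new coefficients $\tilde\theta_i$---cannot rescue the theorem as written, since the conclusion is about the original $\theta_i$, not the $\tilde\theta_i$. The honest conclusion is that the theorem should either assume $t\leq n$ or assert only the partial-sum inequalities (weak majorization) when $t>n$; with that reading your proof is complete.
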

\begin{proof}
For every $k\in [m]$, set $A_k=\sum_{i=1}^{k}\theta_{i} \boldsymbol{u}_{i}\boldsymbol{u}_{i}^{T}$. By Lemma \ref{decpos}, we have $rank(A_k)\leq k$ and hence,
\[
\sum_{i=1}^{k}\theta_i=Tr(A_k)=\sum_{i=1}^{rank(A_k)}\lambda_i(A_k)\leq \sum_{i=1}^{rank(A_k)}\lambda_i(A)\leq \sum_{i=1}^{k}\lambda_i(A)
\]
and we have done.
\end{proof}

By Theorem \ref{decmaj} we state the following conjecture that is similar to Brouwer's Conjecture.
\begin{conjecture}\label{biii}
Let $n,t\in\mathbb{N}$ and $\theta_{1},\ldots,\theta_{t}$ be nonnegative numbers. Suppose that  $\boldsymbol{u}_{1},\ldots,\boldsymbol{u}_{t}$ are $t$ unit vectors of $\boldsymbol{j}_{n}^{\perp}$ and $L=\sum_{i=1}^{t}\theta_{i} \boldsymbol{u}_{i}\boldsymbol{u}_{i}^{T}$. If $A=[\omega_{ij}]=-L+Diag(L_{11},\ldots,L_{nn})$, then for $k\in min(t,n-1)$ we have 
\[
\sum_{i=1}^{k}\theta_{i}\leq \sum_{i=1}^{k}r_{i}(W(A)).
\]
\end{conjecture}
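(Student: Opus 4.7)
The plan is to reduce Conjecture \ref{biii} to the Weighted Brouwer Conjecture (Conjecture \ref{strongmaj}) by way of Theorem \ref{decmaj}. First observe that $L$ is precisely the Laplacian $L_A$: since each $\boldsymbol{u}_i \in \boldsymbol{j}_n^{\perp}$, we have $L\boldsymbol{j}_n = \sum_{i=1}^{t} \theta_i \boldsymbol{u}_i(\boldsymbol{u}_i^T\boldsymbol{j}_n) = \boldsymbol{0}$, and combined with $L = Diag(L_{11},\ldots,L_{nn}) - A$ this forces $L_{jj} = \sum_{i \ne j}\omega_{ji}$, which is the defining property of the weighted Laplacian. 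Consequently, the nonzero eigenvalues of the PSD matrix $L$ are exactly $\mu_1(L_A) \geq \cdots \geq \mu_{n-1}(L_A)$.

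Without loss of generality reindex so that $\theta_1 \geq \cdots \geq \theta_t$ (and drop any zero $\theta_i$ so that Theorem \ref{decmaj} applies). Since the $\boldsymbol{u}_i$ are unit vectors, Theorem \ref{decmaj} applied to the decomposition $L = \sum_{i=1}^{t} \theta_i \boldsymbol{u}_i \boldsymbol{u}_i^T$ yields
\[
(\theta_1,\ldots,\theta_m) \prec (\lambda_1(L),\ldots,\lambda_m(L)), \qquad m := \min(n,t),
\]
so that for every $k \leq \min(t,n-1)$,
\[
\sum_{i=1}^{k} \theta_i \;\leq\; \sum_{i=1}^{k} \lambda_i(L) \;=\; \sum_{i=1}^{k} \mu_i(L_A).
\]
If Conjecture \ref{strongmaj} holds for $A$, then $\sum_{i=1}^{k} \mu_i(L_A) \leq \sum_{i=1}^{k} r_i(W(A))$, and concatenating the two inequalities completes the proof.

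The principal obstacle is that Conjecture \ref{strongmaj} is itself open: by the remark following it, it is equivalent to the classical Brouwer Conjecture for simple graphs. Thus this strategy only conditionally establishes Conjecture \ref{biii}; unconditionally it delivers precisely those cases in which a Brouwer-type bound has been verified. For instance, when the support of $A$ is a tree the required intermediate inequality follows from Theorem \ref{gbtree}, since the relevant row sums $r_i(W(A))$ coincide with the right-hand side $(e(T)+\omega_1,\,\omega_2+\omega_3,\,\ldots)$ appearing there; any further partial progress on Weighted Brouwer transfers in exactly the same way.

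If one wishes to avoid the dependence on Brouwer's Conjecture, an alternative route is to exploit the PSD decomposition directly, without passing through the eigenvalues of $L$. The identities $L_{jj} = \sum_i \theta_i u_{ij}^2$ and $\omega_{jk} = -\sum_i \theta_i u_{ij}u_{ik}$ expose how each rank-one summand populates the weights of $A$, and one can attempt to bound $\sum_{i=1}^{k} \theta_i$ against the ordered entries filling the top $k$ rows of $W(A)$ by a rearrangement-type argument, perhaps combined with Cauchy--Schwarz against the unit normalization of the $\boldsymbol{u}_i$. Carrying this out rigorously appears to be the main technical challenge of any approach that would establish Conjecture \ref{biii} independently of Brouwer's Conjecture.
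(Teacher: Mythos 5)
The statement you were asked to prove is labeled as a \emph{conjecture} in the paper, and the paper supplies no proof of it: it is introduced only with the phrase ``By Theorem \ref{decmaj} we state the following conjecture.'' Your reduction --- observe that $L=L_{A}$ because $L\boldsymbol{j}_{n}=\boldsymbol{0}$ forces the diagonal of $L$ to equal the weighted degrees, apply Theorem \ref{decmaj} to get $\sum_{i=1}^{k}\theta_{i}\leq\sum_{i=1}^{k}\mu_{i}(L_{A})$, and then invoke Conjecture \ref{strongmaj} --- is exactly the chain of reasoning the paper intends, as one can see from the parallel Theorem \ref{gmiii}, which is a theorem precisely because the Grone--Merris side of that chain is proven. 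You correctly identify that the missing link is the open Weighted Brouwer Conjecture, so what you have is a faithful and honestly flagged conditional argument, which is all that is available.

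One caveat on your side remark about trees: Theorem \ref{gbtree} is stated for trees with \emph{nonnegative} edge weights, whereas a matrix $A$ arising from a positive semi-definite decomposition $L=\sum_{i}\theta_{i}\boldsymbol{u}_{i}\boldsymbol{u}_{i}^{T}$ can have negative off-diagonal entries (indeed the paper's last proposition of Section 5 shows this is forced when $t<n-1$ and $A$ is irreducible). So the claim that the tree case of Conjecture \ref{biii} follows unconditionally from Theorem \ref{gbtree} needs an additional shifting argument (in the spirit of Lemma \ref{weightlem}) or a restriction to nonnegative $A$; also the row sums $r_{i}(W(A))$ do not literally coincide with the entries $(e(T)+\omega_{1},\omega_{2}+\omega_{3},\ldots)$ of Theorem \ref{gbtree}, they only dominate the corresponding partial sums. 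Neither issue affects your main (conditional) reduction.
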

By Theorem \ref{decmaj} we state the following theorem that is  similar to Grone-Merris Conjecture.
\begin{theorem}\label{gmiii}
	Let $n,t\in\mathbb{N}$ and $\theta_{1},\ldots,\theta_{t}$ be nonnegative numbers. Suppose that  $\boldsymbol{u}_{1},\ldots,\boldsymbol{u}_{t}$ are $t$ unit vectors of $\boldsymbol{j}_{n}^{\perp}$ and $L=\sum_{i=1}^{t}\theta_{i} \boldsymbol{u}_{i}\boldsymbol{u}_{i}^{T}$. If $A=[\omega_{ij}]=-L+Diag(L_{11},\ldots,L_{nn})$, then we for $k\in min(t,n-1)$ have 
		\item%
		\[
		\sum_{i=1}^{k}\theta_{i}\leq \sum_{j=1}^{k}\sum_{i=1}^{n}\omega_{\downarrow ij}.
		\]
\end{theorem}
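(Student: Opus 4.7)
The plan is to reduce the statement to a combination of Theorem \ref{decmaj} (the PSD–decomposition majorization) and Theorem \ref{ggm} (the weighted Grone–Merris inequality), by first identifying $L$ with the Laplacian of the weighted graph $A$.

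First I would verify that $A$ is a legitimate weighted adjacency matrix and that $L = L_{A}$. Since each $\boldsymbol{u}_{i}\in \boldsymbol{j}_{n}^{\perp}$, we get $L\boldsymbol{j}_{n}=\sum_{i}\theta_{i}(\boldsymbol{u}_{i}^{T}\boldsymbol{j}_{n})\boldsymbol{u}_{i}=\boldsymbol{0}$, so every row sum of $L$ is $0$; in particular $L_{ii}=-\sum_{j\ne i}L_{ij}$. By construction $A_{ii}=-L_{ii}+L_{ii}=0$ and $\omega_{ij}=A_{ij}=-L_{ij}$ for $i\ne j$, hence $A$ is symmetric with zero diagonal, and the weighted degree of vertex $i$ is $\sum_{j\ne i}\omega_{ij}=-\sum_{j\ne i}L_{ij}=L_{ii}$. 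Thus the degree diagonal of $A$ is precisely $Diag(L_{11},\ldots,L_{nn})$, and so
\[
L_{A}=Diag(L_{11},\ldots,L_{nn})-A=Diag(L_{11},\ldots,L_{nn})-\bigl(-L+Diag(L_{11},\ldots,L_{nn})\bigr)=L.
\]

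Next I would apply Theorem \ref{decmaj} to the decomposition $L=\sum_{i=1}^{t}\theta_{i}\boldsymbol{u}_{i}\boldsymbol{u}_{i}^{T}$. Since the $\theta_{i}$ are assumed nonnegative, I drop those indices with $\theta_{i}=0$ (which changes nothing) and rename so that the surviving coefficients are strictly positive, in decreasing order. Theorem \ref{decmaj} then yields, for $k\le \min(t,n)$,
\[
\sum_{i=1}^{k}\theta_{i}\;\le\;\sum_{i=1}^{k}\lambda_{i}(L).
\]
Because $L=L_{A}$ is positive semi-definite with $L\boldsymbol{j}_{n}=0$, the smallest eigenvalue of $L$ equals $0$, and the remaining $n-1$ eigenvalues, in decreasing order, are exactly $\mu_{1}(L_{A})\ge\cdots\ge\mu_{n-1}(L_{A})$. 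Hence for every $k\le \min(t,n-1)$,
\[
\sum_{i=1}^{k}\lambda_{i}(L)=\sum_{i=1}^{k}\mu_{i}(L_{A}).
\]

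Finally, I would invoke Theorem \ref{ggm} applied to the symmetric zero–diagonal matrix $A$: for each $k\in[n-1]$,
\[
\sum_{i=1}^{k}\mu_{i}(L_{A})\;\le\;\sum_{j=1}^{k}d_{j}^{*}=\sum_{j=1}^{k}\sum_{i=1}^{n}\omega_{\downarrow ij}.
\]
Chaining the two inequalities gives the desired bound for all $k\le\min(t,n-1)$.

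The argument has no serious obstacle; the only point that requires a little care is the identification $L=L_{A}$ and the bookkeeping showing that the kernel direction $\boldsymbol{j}_{n}$ of $L$ is always the smallest eigenvalue (guaranteed by PSD-ness), so that truncating the sum at $k\le n-1$ really compares $\sum\theta_{i}$ with the sum of the \emph{largest} $\mu_{i}(L_{A})$ used in Theorem \ref{ggm} rather than accidentally picking up the zero eigenvalue.
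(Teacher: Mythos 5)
Your proposal is correct and follows exactly the route the paper intends: the paper states Theorem \ref{gmiii} with the remark ``By Theorem \ref{decmaj}\ldots'' and omits the details, which are precisely your chain $\sum_{i\le k}\theta_i\le\sum_{i\le k}\lambda_i(L)=\sum_{i\le k}\mu_i(L_A)\le\sum_{j\le k}d_j^{*}$ via Theorems \ref{decmaj} and \ref{ggm}, after identifying $L=L_A$. Your added care about the zero eigenvalue on $\boldsymbol{j}_n$ and about dropping vanishing $\theta_i$ is a welcome (and correct) filling of gaps the paper leaves implicit.
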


Similar to Lemma \ref{decpos}, we state the following proposition.
\begin{proposition}
	Let $t,n\in\mathbb{N}$ and $A\in Sym_{n}(\mathbb{R})$. Suppose that $A=\sum_{i=1}^{t}\theta_{i} \boldsymbol{u}_{i}\boldsymbol{u}_{i}^{T}$ for some independent real vectors $\boldsymbol{u}_{i},\,i\in[t]$.  If $\theta_{i}\neq 0,\,i\in[t]$, then $A$ has  exactly $t$ nonzero eigenvalues and $span\{\boldsymbol{u}_{1},\ldots,\boldsymbol{u}_{t}\}=span\{\boldsymbol{\xi}_{1},\ldots,\boldsymbol{\xi}_{t}\}$ for eigenvectors $\boldsymbol{\xi}_{i},\,i\in[t]$ corresponding to nonzero eigenvalues of $A$.  
\end{proposition}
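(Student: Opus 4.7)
The plan is to adapt the argument of Lemma \ref{decpos} while replacing the positivity hypothesis $\theta_i>0$ by the linear independence of the $\boldsymbol{u}_i$, and to obtain the span identity as a byproduct of computing $\ker(A)$ exactly.

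First, I would observe the easy containment $\mathrm{Im}(A)\subseteq\mathrm{span}\{\boldsymbol{u}_1,\ldots,\boldsymbol{u}_t\}$, which is immediate from the formula $A\boldsymbol{v}=\sum_{i=1}^{t}\theta_i(\boldsymbol{u}_i^{T}\boldsymbol{v})\boldsymbol{u}_i$. In particular $\mathrm{rank}(A)\leq t$. Next I would show the matching lower bound by computing $\ker(A)$. Take $\boldsymbol{v}\in\ker(A)$, so that $\sum_{i=1}^{t}\theta_i(\boldsymbol{u}_i^{T}\boldsymbol{v})\boldsymbol{u}_i=\boldsymbol{0}$. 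Here is where the hypothesis changes crucially from Lemma \ref{decpos}: because the $\boldsymbol{u}_i$ are linearly independent, each coefficient must vanish, giving $\theta_i(\boldsymbol{u}_i^{T}\boldsymbol{v})=0$ for every $i\in[t]$; and since $\theta_i\neq 0$, this forces $\boldsymbol{u}_i^{T}\boldsymbol{v}=0$ for all $i$. Hence $\ker(A)\subseteq\mathrm{span}\{\boldsymbol{u}_1,\ldots,\boldsymbol{u}_t\}^{\perp}$. The reverse containment is trivial from the defining sum. Thus $\ker(A)=\mathrm{span}\{\boldsymbol{u}_1,\ldots,\boldsymbol{u}_t\}^{\perp}$, which has dimension $n-t$, so $\mathrm{rank}(A)=t$.

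Finally I would read off both conclusions. The rank count means $A$ has exactly $t$ nonzero eigenvalues. Since $A$ is symmetric, the image of $A$ equals the orthogonal complement of its kernel, and is spanned by the eigenvectors corresponding to nonzero eigenvalues:
\[
\mathrm{span}\{\boldsymbol{\xi}_1,\ldots,\boldsymbol{\xi}_t\}=\mathrm{Im}(A)=\ker(A)^{\perp}=\bigl(\mathrm{span}\{\boldsymbol{u}_1,\ldots,\boldsymbol{u}_t\}^{\perp}\bigr)^{\perp}=\mathrm{span}\{\boldsymbol{u}_1,\ldots,\boldsymbol{u}_t\},
\]
which is the desired identity.

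There is no real obstacle: the only subtlety is recognizing that in Lemma \ref{decpos} the positivity of $\theta_i$ was used precisely to prevent cancellation in $\boldsymbol{\xi}^{T}A\boldsymbol{\xi}=\sum_i\theta_i(\boldsymbol{u}_i^{T}\boldsymbol{\xi})^{2}=0$, whereas in the present setting (signs of $\theta_i$ unrestricted) that quadratic-form trick is unavailable, and one must instead exploit linear independence of the $\boldsymbol{u}_i$ to conclude $\boldsymbol{u}_i^{T}\boldsymbol{v}=0$ from the vector equation $\sum_i\theta_i(\boldsymbol{u}_i^{T}\boldsymbol{v})\boldsymbol{u}_i=\boldsymbol{0}$. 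Once this substitution is made the argument is otherwise identical.
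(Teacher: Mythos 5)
Your proof is correct, but it runs in the dual direction to the paper's. The paper establishes the containment $\mathrm{span}\{\boldsymbol{u}_{1},\ldots,\boldsymbol{u}_{t}\}\subseteq \mathrm{Im}(A)$ constructively: for each $i$ it orthogonally decomposes $\boldsymbol{u}_{i}=\boldsymbol{u}'_{i}+\widehat{\boldsymbol{u}_{i}}$ against $V_{i}=\mathrm{span}\{\boldsymbol{u}_{j}: j\neq i\}$, notes that linear independence forces $\widehat{\boldsymbol{u}_{i}}\neq\boldsymbol{0}$, and computes $A\widehat{\boldsymbol{u}_{i}}=\theta_{i}\lvert\widehat{\boldsymbol{u}_{i}}\rvert^{2}\boldsymbol{u}_{i}$, exhibiting an explicit preimage of each $\boldsymbol{u}_{i}$; the reverse containment comes from $A\boldsymbol{v}=\boldsymbol{0}$ for $\boldsymbol{v}\perp\mathrm{span}\{\boldsymbol{u}_{i}\}$. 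You instead pin down the kernel exactly: from $\sum_{i}\theta_{i}(\boldsymbol{u}_{i}^{T}\boldsymbol{v})\boldsymbol{u}_{i}=\boldsymbol{0}$, linear independence kills each coefficient and $\theta_{i}\neq 0$ gives $\boldsymbol{u}_{i}^{T}\boldsymbol{v}=0$, so $\ker(A)=\mathrm{span}\{\boldsymbol{u}_{i}\}^{\perp}$, and everything follows by taking orthogonal complements. Both arguments use linear independence of the $\boldsymbol{u}_{i}$ as the essential replacement for the positivity hypothesis of Lemma \ref{decpos}, and your closing remark correctly identifies why the quadratic-form trick of that lemma is unavailable here. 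Your kernel computation is arguably cleaner (no auxiliary vectors to introduce, and the rank count $\mathrm{rank}(A)=t$ falls out immediately from $\dim\ker(A)=n-t$), while the paper's version has the minor virtue of displaying, for each $\boldsymbol{u}_{i}$, a concrete vector that $A$ maps onto it. Either way the proposition is fully proved.
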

\begin{proof}
	For every $i\in [t]$, suppose that $V_i=span\{\boldsymbol{u}_{1},\ldots,\boldsymbol{u}_{i-1},\boldsymbol{u}_{i+1},\ldots,\boldsymbol{u}_{t}\}$ and $\widehat{\boldsymbol{u}_{i}}=\boldsymbol{u}_{i}-\boldsymbol{u'}_{i}$, where $\boldsymbol{u'}_{i}\in V_i$ and $\widehat{\boldsymbol{u}_{i}}\in {V_{i}}^{\perp}$. So, $A\widehat{\boldsymbol{u}_{i}}=(\theta_{i}|\widehat{\boldsymbol{u}_{i}}|^{2})\boldsymbol{u}_{i}$. Therefore
	$span\{\boldsymbol{u}_{1},\ldots,\boldsymbol{u}_{t}\}\subseteq Im(A)$. Also, if $\boldsymbol{v}\in (span\{\boldsymbol{u}_{1},\ldots,\boldsymbol{u}_{t}\})^{\bot}$, then $A\boldsymbol{v}=\boldsymbol{0}$. Thus $span\{\boldsymbol{u}_{1},\ldots,\boldsymbol{u}_{t}\}=span\{\boldsymbol{\xi}_{1},\ldots,\boldsymbol{\xi}_{t}\}$ for eigenvectors $\boldsymbol{\xi}_{i},\,i\in[t]$ corresponding to nonzero eigenvalues of $A$.  
\end{proof}
The next proposition considers a relation between the sign of entries of a positive semi-definite matrix and its positive semi-definite decomposition.
\begin{proposition}
	Let $n,t\in\mathbb{N}$ and $\theta_{1},\ldots,\theta_{t}$ be positive real numbers. Suppose that  $\boldsymbol{u}_{1},\ldots,\boldsymbol{u}_{t}$ are $t$ real vectors of $\boldsymbol{j}_{n}^{\perp}$ and $L=\sum_{i=1}^{t}\theta_{i} \boldsymbol{u}_{i}\boldsymbol{u}_{i}^{T}$. If $t<n-1$ and  $A=[\omega_{ij}]=-L+Diag(L_{11},\ldots,L_{nn})$ is irreducible, then some of the entries of $A$ are negative.
\end{proposition}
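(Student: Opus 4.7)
The plan is to argue by contradiction. Suppose, toward a contradiction, that every off-diagonal entry of $A$ is nonnegative, so that $A$ is a symmetric nonnegative matrix with zero diagonal.

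First I would check that $L$ is precisely the Laplacian of the weighted graph whose adjacency matrix is $A$. The matrix $A=-L+\text{Diag}(L_{11},\ldots,L_{nn})$ has zero diagonal by construction. Because each $\boldsymbol{u}_{i}\in\boldsymbol{j}_{n}^{\perp}$, we have $L\boldsymbol{j}_{n}=\boldsymbol{0}$, hence for each $i$,
\[
\sum_{j\neq i}\omega_{ij}=-\sum_{j\neq i}L_{ij}=L_{ii}.
\]
Therefore the degree diagonal of $A$ equals $\text{Diag}(L_{11},\ldots,L_{nn})$, and $L_{A}=\text{Diag}(L_{11},\ldots,L_{nn})-A=L$.

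Next I would invoke the standard fact that the Laplacian of a connected weighted graph with strictly positive edge weights has rank exactly $n-1$, with kernel spanned by $\boldsymbol{j}_{n}$. Because $A$ is symmetric, nonnegative, and irreducible, the graph with edge set $\{ij:\omega_{ij}>0\}$ is connected. The quadratic form identity
\[
\boldsymbol{x}^{T}L_{A}\boldsymbol{x}=\sum_{i<j}\omega_{ij}(x_{i}-x_{j})^{2}
\]
then forces any $\boldsymbol{x}\in\ker L_{A}$ to be constant across every edge, and by connectedness constant on $[n]$; so $\ker L_{A}=\text{span}\{\boldsymbol{j}_{n}\}$ and $\mathrm{rank}(L)=\mathrm{rank}(L_{A})=n-1$.

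Finally, the given PSD decomposition $L=\sum_{i=1}^{t}\theta_{i}\boldsymbol{u}_{i}\boldsymbol{u}_{i}^{T}$ gives $\mathrm{rank}(L)\leq t$, and combined with the hypothesis $t<n-1$ this yields $n-1=\mathrm{rank}(L)\leq t<n-1$, a contradiction. There is no real obstacle here beyond the identification $L=L_{A}$; the heart of the argument is the rank formula for Laplacians of connected weighted graphs, which forces the rank budget $t$ to be at least $n-1$ whenever $A$ can be made nonnegative.
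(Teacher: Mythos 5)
Your proof is correct and follows essentially the same route as the paper: assume for contradiction that all entries of $A$ are nonnegative, use irreducibility to get a connected weighted graph, identify $L=L_{A}$, and contradict the bound $\mathrm{rank}(L)\le t<n-1$ coming from the rank-one decomposition. The only cosmetic difference is that you establish $\ker L_{A}=\mathrm{span}\{\boldsymbol{j}_{n}\}$ directly from the quadratic form $\sum_{i<j}\omega_{ij}(x_{i}-x_{j})^{2}$, whereas the paper instead deduces $\mu_{n-1}(L_{A})\ge(\min_{\omega_{ij}>0}\omega_{ij})\,\mu_{n-1}(L_{G})>0$ by comparison with the underlying simple graph.
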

\begin{proof}
	By Lemma \ref{decpos}, the number of positive eigenvalues of $L$ are at most $t$. Suppose by contradiction that all entries of $A$ are nonnegative. Matrix $A$ is irreducible, hence, it is the adjacency matrix of a connected weighted graph. Suppose that $G$ is the simple graph such that $ij\in E(G)$ if and only if $\omega_{ij}\neq 0$. We have
	\[
	\mu_{n-1}(L_A)\geq (\min_{\omega_{ij}>0}\omega_{ij})\mu_{n-1}(L_G)>0,
	\]
	which is a contradiction and  the proof is complete.
\end{proof}

\section{Some Conjectured Bounds on Sum of the Largest Eigenvalues of Graphs}
Let $n\in\mathbb{N}$ and $\phi$ be a function on graphs. For given edges of simple graphs on $n$ vertices, an extremal problem is that which graph has the maximum (or minimum) value of  $\phi$. In the other hand, the problem is that what distribution and arrangement of the given weights $0$ and $1$  in an adjacency matrix has the extreme value.  
In this section, we state some conjectures on the sum of the $k$ largest(smallest) eigenvalues of simple graphs. In particular, we conjecure some upper and lower bounds for trees and other graphs.

For conjectured bounds in this section, we use the data of trees and graphs from \cite{graphdata} and \cite{treedata}. 
\subsection{Bounds for Trees}
\subsubsection{Adjacency Matrix}
It is well-known that for the largest adjacency eigenvalue of trees we have:
\begin{theorem}\cite[p. 21]{cds}
Let $n\in \mathbb{N}$ and $T$ be a tree on $n$ vertices, we have
\[
\lambda_1(P_{n})\leq \lambda_1(T)\leq \lambda_1(S_{n})
\]	
\end{theorem}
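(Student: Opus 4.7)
The plan is to establish both inequalities via Perron--Frobenius theory combined with local tree transformations that move any tree monotonically toward $S_n$ (for the upper bound) or toward $P_n$ (for the lower bound), while controlling the change in $\lambda_1$ via the Rayleigh quotient. The key underlying tool is that since $A(T)$ is nonnegative and irreducible, $\lambda_1(T)$ is simple and admits a strictly positive eigenvector $\boldsymbol{x}>0$ with $A(T)\boldsymbol{x}=\lambda_1(T)\boldsymbol{x}$; equivalently $\lambda_1(T)=\max_{\|\boldsymbol{y}\|=1}\boldsymbol{y}^{T}A(T)\boldsymbol{y}$, which we use to compare spectral radii of trees that differ by one edge move.

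For the upper bound $\lambda_1(T)\leq \lambda_1(S_n)$, I proceed by a leaf-migration argument. If $T\neq S_n$ then $T$ has diameter $\geq 3$, so there exist a vertex $w$ of maximum degree and a leaf $v$ whose neighbor $u$ is distinct from $w$. Let $\boldsymbol{x}>0$ be the Perron eigenvector of $A(T)$; since $w$ has maximum degree I expect $x_w\geq x_u$ (if not, swap roles). Define $T'=T-uv+wv$, attaching the leaf $v$ to $w$ instead of $u$. Using $\boldsymbol{x}$ as a trial vector in the Rayleigh quotient for $T'$, I compute
\[
\boldsymbol{x}^{T}A(T')\boldsymbol{x}-\boldsymbol{x}^{T}A(T)\boldsymbol{x}=2x_v(x_w-x_u)\geq 0,
\]
so $\lambda_1(T')\geq \lambda_1(T)$. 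Iterating this leaf-migration strictly decreases (say) the sum $\sum_v d_v^2$-deficit relative to the star, so after finitely many steps we reach $S_n$, giving $\lambda_1(T)\leq \lambda_1(S_n)$.

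For the lower bound $\lambda_1(P_n)\leq \lambda_1(T)$, I use the dual ``subtree straightening'' transformation. If $T\neq P_n$, then $T$ has a branching vertex $v$ of degree $\geq 3$. Choose a longest path $P$ in $T$ with one endpoint $p$ and let $u$ be a neighbor of $v$ not on $P$, with attached subtree $T_u$. Let $\boldsymbol{x}>0$ be the Perron eigenvector of $T$. The Perron coordinates decrease away from the ``center of mass'' of $T$, so along the longest path the endpoint $p$ has a small coordinate with $x_p\leq x_v$ in the relevant comparison. Form $T''$ by detaching $T_u$ from $v$ and reattaching it so that $u$ becomes a neighbor of $p$ (grafting the branch to the end of the longest path). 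An analogous Rayleigh-quotient computation on $T''$ using $\boldsymbol{x}$, summing over the edges that were removed and added, yields $\lambda_1(T'')\leq \lambda_1(T)$. The transformation strictly reduces the number of branch vertices (or a lexicographic measure of branching), so iterating terminates at $P_n$.

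The main obstacle is the lower bound, for two reasons. First, picking the ``right'' transformation is delicate: naively moving a branch can increase the spectral radius rather than decrease it, so one must attach at the correct endpoint and in the correct orientation, typically exploiting monotonicity of the Perron entries along the longest path. Second, the sign analysis of $\boldsymbol{x}^{T}A(T'')\boldsymbol{x}-\boldsymbol{x}^{T}A(T)\boldsymbol{x}$ requires a genuine comparison between the eigenvector entry at the branching vertex $v$ and at the path endpoint $p$, which in turn rests on a monotonicity lemma for Perron entries of trees along paths to a leaf. Once this lemma is in hand, both inequalities follow cleanly; equality characterizations ($T=S_n$ resp.\ $T=P_n$) drop out from strictness of the Rayleigh inequality whenever the transformation is nontrivial.
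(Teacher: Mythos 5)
The paper does not prove this statement at all --- it is quoted from Cvetkovi\'c--Doob--Sachs \cite[p.~21]{cds} as a known fact --- so the only question is whether your argument stands on its own. Your upper bound is essentially the classical Perron--Frobenius switching proof and is sound modulo two repairable points: the vertex $w$ to which the leaf is migrated should be chosen as a vertex maximizing the Perron entry $x_w$ (maximum degree does not guarantee $x_w\geq x_u$, and ``if not, swap roles'' does not obviously restore a terminating process), and the iteration is cleaner if phrased as an extremal argument (a tree maximizing $\lambda_1$ that is not $S_n$ admits a switch strictly increasing $\lambda_1$, contradiction), since the Perron vector changes after each move and your potential-function claim is not justified.

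The lower bound, however, has a genuine gap: the Rayleigh quotient points the wrong way. From the variational characterization $\lambda_1(G)=\max_{\|\boldsymbol{y}\|=1}\boldsymbol{y}^{T}A(G)\boldsymbol{y}$, a test vector only ever gives a \emph{lower} bound on $\lambda_1$. Thus with $\boldsymbol{x}$ the Perron vector of $T$, the inequality $\boldsymbol{x}^{T}A(T'')\boldsymbol{x}\leq \boldsymbol{x}^{T}A(T)\boldsymbol{x}=\lambda_1(T)$ only says that a certain lower bound for $\lambda_1(T'')$ is at most $\lambda_1(T)$; it does not yield $\lambda_1(T'')\leq\lambda_1(T)$. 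To make a grafting argument decrease the spectral radius you must test with the Perron vector of the \emph{target} tree $T''$ and prove the monotonicity of \emph{its} entries (this is exactly what the Li--Feng-type grafting lemmas do), or argue by contradiction from a minimizer, or avoid Rayleigh quotients altogether via the Hoffman--Smith edge-subdivision lemma, characteristic-polynomial recurrences (Lov\'asz--Pelik\'an), or closed-walk counting. Your ``center of mass'' monotonicity claim is also unproved, and the termination measure for the straightening step is not verified, but the directional error in the Rayleigh comparison is the step that would fail as written.
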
	
The \textit{subdivision} of an edge $e=ij$ of a simple graph $G$ is a graph obtained by deleting $e$ and adding a new vertex $u$ and two new edges $iu$ and $uj$ . The subdivision of $G$ is a graph by the subdivision of all edges and is denoted by $S(G)$.
Suppose that $k\in \mathbb{N}$ and $T$ is a tree of order $k$ and $V(T)=\{v_1,\ldots,v_k\}$. Now, we add some new vertices to $v_i$ in $S(T)$, for all $i\in [k]$, such that $|d_{v_i}-d_{v_j}|\leq 1$ for all $i,j\in[k]$ and all new vertices have degree one. We denote the set of all such trees of order $n$ by  $\mathcal{T}_{n,k}$.
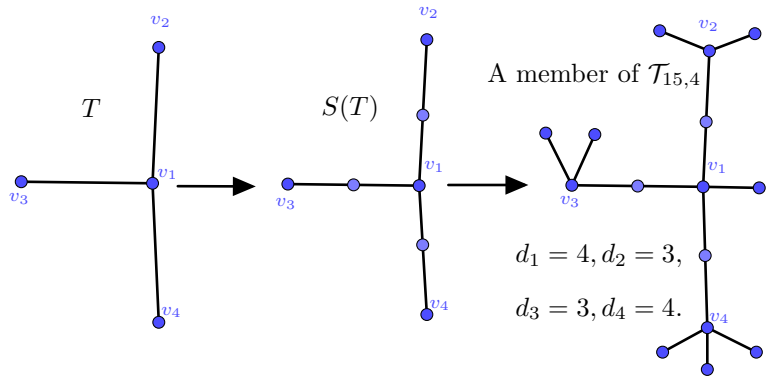
\begin{figure}[H]
\centering
\definecolor{xdxdff}{rgb}{0.49019607843137253,0.49019607843137253,1.}
\definecolor{ududff}{rgb}{0.30196078431372547,0.30196078431372547,1.}
\begin{tikzpicture}[scale=1.5,line cap=round,line join=round,>=triangle 45,x=1.0cm,y=1.0cm]

\draw [->,line width=1.pt] (0.5468835018448248,6.493112810200549) -- (1.2383188433163177,6.490810881325153);
\draw [line width=1.pt] (0.3229885461003082,6.518677359908862)-- (0.3783908368802463,7.7236771843725185);
\draw [line width=1.pt] (-0.8404595602783911,6.532527932603847)-- (0.3229885461003082,6.518677359908862);
\draw [line width=1.pt] (0.3229885461003082,6.518677359908862)-- (0.3783908368802463,5.285976390055237);
\draw [line width=1.pt] (2.6859842251178363,6.497868969173132)-- (2.754168790751931,7.792344717615887);
\draw [line width=1.pt] (1.5225361187391369,6.511719541868117)-- (2.6859842251178363,6.497868969173132);
\draw [line width=1.pt] (2.6859842251178363,6.497868969173132)-- (2.754168790751931,5.354643923298605);
\draw [->,line width=1.pt] (2.9447525363462352,6.50359315617931) -- (3.6541687907519322,6.50359315617931);
\draw [line width=1.pt] (5.202984549771504,6.487344893152231)-- (5.258386840551442,7.692344717615887);
\draw [line width=1.pt] (4.039536443392804,6.501195465847215)-- (5.202984549771504,6.487344893152231);
\draw [line width=1.pt] (5.202984549771504,6.487344893152231)-- (5.2399377908127285,5.238147945617792);
\draw [line width=1.pt] (4.818122720625557,7.871296565574082)-- (5.258386840551442,7.692344717615887);
\draw [line width=1.pt] (5.258386840551442,7.692344717615887)-- (5.661752860999899,7.845732015865768);
\draw [line width=1.pt] (5.2399377908127285,5.238147945617792)-- (5.674535135854056,5.020849273097126);
\draw [line width=1.pt] (5.2399377908127285,5.238147945617792)-- (4.843687270333871,5.020849273097126);
\draw [line width=1.pt] (4.039536443392804,6.501195465847215)-- (3.8083230071471785,6.9637550509289525);
\draw (-0.38503006316158195,7.347223296553655) node[anchor=north west] {$T$};
\draw (1.7368275626284286,7.385570121116125) node[anchor=north west] {$S(T)$};
\draw (3.2092176478956,7.666780167907573) node[anchor=north west] {$\text{A member of } \mathcal{T}_{15,4}$};
\draw (3.3386515897776,6.235165384242018) node[anchor=north west] {$\begin{array}{c} d_1=4, d_2=3,\\ d_3=3, d_4=4.\\ \end{array}$};
\draw [line width=1.pt] (5.202984549771504,6.487344893152231)-- (5.700099685562369,6.478028606470996);
\draw [line width=1.pt] (4.039536443392804,6.501195465847215)-- (4.242920352188507,6.950972776074796);
\draw [line width=1.pt] (5.2399377908127285,5.238147945617792)-- (5.2399377908127285,4.867461974847246);
\begin{scriptsize}
\draw [fill=ududff] (0.3783908368802463,7.7236771843725185) circle (1.5pt);
\draw[color=ududff] (0.3946887029419761,7.9543813521261) node {$v_2$};
\draw [fill=ududff] (0.3229885461003082,6.518677359908862) circle (1.5pt);
\draw[color=ududff] (0.45747097779613284,6.601718416418739) node {$v_1$};
\draw [fill=ududff] (-0.8404595602783911,6.532527932603847) circle (1.5pt);
\draw[color=ududff] (-0.8579742327653794,6.38216154506482) node {$v_3$};
\draw [fill=ududff] (0.3783908368802463,5.285976390055237) circle (1.5pt);
\draw[color=ududff] (0.49138235206691625,5.3746200304196915) node {$v_4$};
\draw [fill=ududff] (2.754168790751931,7.792344717615887) circle (1.5pt);
\draw[color=ududff] (2.7721918258151206,8.031075001251041) node {$v_2$};
\draw [fill=ududff] (2.6859842251178363,6.497868969173132) circle (1.5pt);
\draw[color=ududff] (2.821918258151206,6.668936141564582) node {$v_1$};
\draw [fill=ududff] (1.5225361187391369,6.511719541868117) circle (1.5pt);
\draw[color=ududff] (1.5067466152536082,6.338216154506482) node {$v_3$};
\draw [fill=ududff] (2.754168790751931,5.354643923298605) circle (1.5pt);
\draw[color=ududff] (2.8688854749400607,5.438531404690475) node {$v_4$};
\draw [fill=xdxdff] (2.718927551245164,7.123294039323354) circle (1.5pt);
\draw [fill=xdxdff] (2.7173011344558975,5.972790092719148) circle (1.5pt);
\draw [fill=xdxdff] (2.1046458143578715,6.504789664539323) circle (1.5pt);
\draw [fill=ududff] (5.258386840551442,7.692344717615887) circle (1.5pt);
\draw[color=ududff] (5.251953147521519,7.91603452756363) node {$v_2$};
\draw [fill=ududff] (5.202984549771504,6.487344893152231) circle (1.5pt);
\draw[color=ududff] (5.3299972083989,6.656153866710425) node {$v_1$};
\draw [fill=ududff] (4.039536443392804,6.501195465847215) circle (1.5pt);
\draw[color=ududff] (4.024854761522477,6.356596995356507) node {$v_3$};
\draw [fill=ududff] (5.2399377908127285,5.238147945617792) circle (1.5pt);
\draw[color=ududff] (5.348646796646458,5.323490931003064) node {$v_4$};
\draw [fill=xdxdff] (5.229492102422493,7.063884163311259) circle (1.5pt);
\draw [fill=xdxdff] (5.221023628279175,5.877537439917533) circle (1.5pt);
\draw [fill=xdxdff] (4.622902715581441,6.494250629273541) circle (1.5pt);
\draw [fill=ududff] (4.818122720625557,7.871296565574082) circle (1.5pt);
\draw [fill=ududff] (5.661752860999899,7.845732015865768) circle (1.5pt);
\draw [fill=ududff] (5.674535135854056,5.020849273097126) circle (1.5pt);
\draw [fill=ududff] (4.843687270333871,5.020849273097126) circle (1.5pt);
\draw [fill=ududff] (3.8083230071471785,6.9637550509289525) circle (1.5pt);
\draw [fill=ududff] (5.700099685562369,6.478028606470996) circle (1.5pt);
\draw [fill=ududff] (4.242920352188507,6.950972776074796) circle (1.5pt);
\draw [fill=ududff] (5.2399377908127285,4.867461974847246) circle (1.5pt);
\end{scriptsize}
\end{tikzpicture}
\caption{A member of $\mathcal{T}_{15,4}$.}
\end{figure}

\begin{example}
Suppose that $k=2$ and $n\geq 3$. The only tree of order $2$ is $P_2$ and $S(P_2)=P_3$. Hence, 
\begin{center}
	\definecolor{ududff}{rgb}{0.30196078431372547,0.30196078431372547,1.}
\begin{tikzpicture}[scale=1.3,line cap=round,line join=round,>=triangle 45,x=1.0cm,y=1.0cm]
\draw [line width=1.pt] (2.012760572694985,1.5) circle (0.9cm);
\draw [line width=1.pt] (4.007240572694984,1.5415517180849538) circle (0.9cm);
\draw [line width=1.pt] (2.,2.)-- (3.,2.);
\draw [line width=1.pt] (3.,2.)-- (4.,2.);
\draw [line width=1.pt] (2.,2.)-- (1.46,1.);
\draw [line width=1.pt] (2.,2.)-- (2.52,1.);
\draw [line width=1.pt] (4.,2.)-- (3.577873129421675,1.0061494273050153);
\draw [line width=1.pt] (4.,2.)-- (4.47816035459567,1.02);
\draw (3.8,1.3385631719846447) node[anchor=north west] {$\cdots$};
\draw (1.8,1.3247125992896602) node[anchor=north west] {$\cdots$};
\draw (1.46,1.1) node[anchor=north west] {$S_{\lceil\frac{n-1}{2}\rceil}$};
\draw (3.55,1.15) node[anchor=north west] {$S_{\lfloor\frac{n-1}{2}\rfloor}$};
\begin{scriptsize}
\draw [fill=ududff] (2.,2.) circle (2.pt);
\draw[color=ududff] (2.095861851058329,2.2) node {$v_1$};
\draw [fill=ududff] (3.,2.) circle (2.pt);
\draw [fill=ududff] (4.,2.) circle (2.pt);
\draw[color=ududff] (4.090344319136102,2.2) node {$v_2$};
\draw[color=black] (0.3,1.4) node{{\large $\mathcal{T}_{n,2}=$}};
\draw [fill=ududff] (1.46,1.) circle (2.pt);
\draw [fill=ududff] (2.52,1.) circle (2.pt);
\draw [fill=ududff] (3.577873129421675,1.0061494273050153) circle (2.pt);
\draw [fill=ududff] (4.47816035459567,1.02) circle (2.pt);
\end{scriptsize}
\draw [decoration={brace,amplitude=0.5em},decorate,ultra thick,black](1.,0.4) -- (1.,2.5);
\draw [decoration={brace,amplitude=0.5em},decorate,ultra thick,black](5.,2.5) -- (5.,0.4);
\end{tikzpicture}  
\end{center}
and $|\mathcal{T}_{n,2}|=1$.
\end{example}

We conjecture that for an integer $k$, we have the following statement:
\begin{conjecture}\label{adjcon1}
	Let $k\in \mathbb{N},k\geq 2$. 
\begin{itemize}
	\item For every $n\geq k+2$ and every tree $T$ on $n$ vertices: $\sum_{i=1}^{k}\lambda_i(S_{n})\leq \sum_{i=1}^{k}\lambda_i(T)$;
	\item 
	for every $n\geq 3k-1$ and every tree $T$ on $n$ vertices, we have
	\[
	\sum_{i=1}^{k}\lambda_i(T)\leq \sum_{i=1}^{k}\lambda_i(T_{n,k}),
	\]
	where $T_{n,k}\in \mathcal{T}_{n,k}$.
\end{itemize}

\end{conjecture}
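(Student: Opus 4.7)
My plan is to approach both parts by induction on $n$, combined with a sequence of local graph transformations that monotonically change $\sum_{i=1}^{k}\lambda_i(T)$ in the desired direction. The base cases ($n=k+2$ for the lower bound and $n=3k-1$ for the upper bound) involve only a handful of trees and can be handled by direct verification, drawing on the spectral tables in \cite{treedata}.

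For the lower bound, I would try to show that any tree $T\neq S_n$ on $n$ vertices admits a local modification $T\mapsto T'$, preserving the tree property and order $n$, which strictly decreases some ``distance-to-star'' statistic (e.g.\ the diameter, or the number of non-leaves) while satisfying $\sum_{i=1}^{k}\lambda_i(T') \leq \sum_{i=1}^{k}\lambda_i(T)$. The natural candidate is a \emph{pendant migration}: pick a leaf $u$ adjacent to a non-maximum-degree vertex $v$, detach it, and reattach it to some vertex $c$ of maximum degree. The perturbation $A(T')-A(T)$ decomposes as a difference of two rank-$2$ positive semi-definite matrices, spanned by $\boldsymbol{e}_u\pm\boldsymbol{e}_c$ and $\boldsymbol{e}_u\pm\boldsymbol{e}_v$ respectively, so Theorem \ref{summaj} applies in principle. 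For the upper bound, the strategy is reversed: design two transformations, the first subdividing long paths between central (large-degree) vertices and the second rebalancing pendant loads at those vertices, and show each weakly increases $\sum_{i=1}^{k}\lambda_i$, terminating in some $T_{n,k}\in\mathcal{T}_{n,k}$.

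The main obstacle is establishing the \emph{signed} monotonicity of $\sum_{i=1}^{k}\lambda_i$ under a single such move: Theorem \ref{summaj} applied crudely gives only an unsigned Weyl bound, since the perturbation is neither positive nor negative semi-definite. A sharper tool is required, either (a) the Rayleigh-quotient formula of Theorem \ref{trace}, exhibiting for the more ``extremal'' of the two trees an explicit near-optimal orthonormal $k$-frame whose trace against the other tree is provably smaller (resp.\ larger); or (b) a direct characteristic-polynomial comparison, using the interlacing theorem (Theorem \ref{int}) on the common subtree obtained by deleting the migrating leaf. A secondary subtlety is that at the lower-bound base case $n=k+2$, the star $S_n$ has only two nonzero eigenvalues $\pm\sqrt{n-1}$, so the required inequality demands a genuinely algebraic (rather than convex-analytic) argument relating one large isolated eigenvalue to the more spread-out spectrum of any other tree.
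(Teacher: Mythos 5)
The statement you are addressing is presented in the paper as Conjecture \ref{adjcon1}; the paper gives no proof of it, so there is nothing to compare your argument against except its own internal completeness. Judged on that basis, what you have written is a plan rather than a proof, and the plan leaves its central claim unestablished. The entire argument hinges on the \emph{signed} monotonicity of $\sum_{i=1}^{k}\lambda_i$ under a single pendant migration (for the lower bound) or a subdivision/rebalancing move (for the upper bound), and you say yourself that Theorem \ref{summaj} only yields an unsigned Weyl-type bound because the perturbation $A(T')-A(T)$ is indefinite. Naming two possible ``sharper tools'' --- a hand-crafted near-optimal $k$-frame via Theorem \ref{trace}, or a characteristic-polynomial comparison via Theorem \ref{int} --- is not the same as exhibiting one; for $k=1$ the analogous migration move \emph{increases} $\lambda_1$, so whatever mechanism reverses the sign for $k\geq 2$ must genuinely use the presence of the lower eigenvalues, and no such mechanism is identified. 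Until that lemma is proved, neither bullet of the conjecture follows.

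Two further structural problems. First, the base cases $n=k+2$ and $n=3k-1$ are not finitely many: they form an infinite family indexed by $k$, so they cannot be ``handled by direct verification'' from the finite spectral tables of \cite{treedata}; you would need a uniform argument in $k$, and for the lower bound at $n=k+2$ you correctly observe that this reduces to comparing $\sqrt{n-1}$ with a partial spectral sum of an arbitrary tree --- an inequality you do not prove. Second, the induction-on-$n$ framing is conflated with the fixed-$n$ transformation argument: the local moves you describe preserve the order $n$, so they constitute a descent argument within a fixed $n$ and make the induction on $n$ superfluous, yet no step is given that passes from trees on $n$ vertices to trees on $n+1$ vertices. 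The proposal is a reasonable research programme for attacking the conjecture, but it does not prove it.
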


Now, we consider the set $\mathcal{T}_{n,k}$ for $k>1$ and $n\geq k^2+1$. Assume that $T_{n,k}$ is a member of $\mathcal{T}_{n,k}$. By the definition of $\mathcal{T}_{n,k}$, the vertices $v_1,\ldots,v_k$ of $T_{n,k}$ have degree at least $k$. The matching number of $T_{n,k}$ is $k$ and hence $T_{n,k}$ has exactly $k$ positive eigenvalues. We define for $T_{n,k}$ the function
$f_{T_{n,k}}(x)=\sum_{i=1}^{k}\sqrt{x+\lambda_{i}^{2}(T_{n,k})}$. By using the Taylor series of $f_{T_{n,k}}(x)$, one can see that for two trees $T$ and $T'$, there exists $X_0\in\mathbb{R}$ such that for every $x>X_0$, $f_{T}(x)\leq f_{T'}(x)$ or  for every $x>X_0$, $f_{T'}(x)\leq f_{T}(x)$. For an integer $s$, where $0\leq s\leq k-1$, define $T^{s}_{k}$ as follows: $T^{s}_{k}\in \mathcal{T}_{k^2 +1+s,k}$ and there exists $X_0\in\mathbb{R}$ such that for every $x>X_0$ and $T\in \mathcal{T}_{k^2 +1+s,k}$, $f_{T}(x)\leq f_{T^{s}_{k}}(x)$.

The following lemma is a corollary of \cite[Lemma 2.8]{cs} and we use it to construct our extremal trees.

\begin{lemma}
Let $k,r\in \mathbb{N}$ and $G$ be a bipartite graph with partitions $X,Y$ and $\lambda_1(G)\geq\cdots\geq \lambda_k(G)>0$. If $G'$ is obtained from $G$ by joining each vertex  of $X$ to $r$ new pendant vertices, then
$\lambda_i(G')=\sqrt{r+\lambda_{i}^{2}(G)}$, for $i\in[k]$.
\end{lemma}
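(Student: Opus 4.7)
The plan is to exploit the bipartite structure by working at the level of biadjacency matrices. Let $p=|X|$, $q=|Y|$, and let $B$ denote the $p\times q$ biadjacency matrix of $G$, so that
\[
A(G)=\begin{pmatrix} 0 & B \\ B^{T} & 0 \end{pmatrix}.
\]
It is standard that the positive eigenvalues of $A(G)$ are precisely the positive singular values of $B$, i.e.\ the numbers $\sqrt{\sigma}$ where $\sigma$ ranges over the positive eigenvalues of $BB^{T}$. In particular $\lambda_{i}^{2}(G)$ is the $i^{\rm th}$ largest eigenvalue of $BB^{T}$ for every $i\in[k]$.

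Next I would write down the biadjacency matrix of $G'$. Labeling the new pendant vertices so that the $r$ pendants attached to the $i^{\rm th}$ vertex of $X$ occupy positions $(i-1)r+1,\ldots,ir$, and viewing $X$ as one side and $Y\cup Z$ (where $Z$ is the set of $rp$ new vertices) as the other, the biadjacency matrix of $G'$ is the $p\times(q+rp)$ matrix
\[
M=[\,B\mid C\,], \qquad C_{ij}=\begin{cases} 1 & (i-1)r<j\leq ir, \\ 0 & \text{otherwise.}\end{cases}
\]
A direct computation gives $CC^{T}=rI_{p}$, and since $Z$ is not connected to $Y$, $B$ and $C$ occupy disjoint column blocks. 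Hence
\[
MM^{T}=BB^{T}+CC^{T}=BB^{T}+rI_{p}.
\]

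Because $G'$ is bipartite, its positive eigenvalues are the positive singular values of $M$, equivalently the square roots of the positive eigenvalues of $MM^{T}$. Since $MM^{T}=BB^{T}+rI_{p}$ has the same eigenvectors as $BB^{T}$ with each eigenvalue shifted by $r$, the $i^{\rm th}$ largest eigenvalue of $MM^{T}$ equals $\lambda_{i}^{2}(G)+r$ for $i\in[k]$. As $r>0$ and $x\mapsto\sqrt{x+r}$ is strictly increasing, the ordering is preserved, so the top $k$ eigenvalues of $G'$ are exactly $\sqrt{r+\lambda_{i}^{2}(G)}$, $i\in[k]$.

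I do not foresee a genuine obstacle: the argument is a one-line matrix identity $MM^{T}=BB^{T}+rI_{p}$ together with the standard bipartite singular-value/eigenvalue correspondence. The only point requiring a little care is checking that the eigenvalues are indexed consistently---in particular that no extra eigenvalue of $MM^{T}$ sneaks in above the top $k$ of $BB^{T}$---but this is immediate because the spectrum of $BB^{T}+rI_{p}$ is just a rigid shift of that of $BB^{T}$, so the order statistics match index by index.
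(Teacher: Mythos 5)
Your argument is correct and complete: the identity $MM^{T}=[\,B\mid C\,][\,B\mid C\,]^{T}=BB^{T}+CC^{T}=BB^{T}+rI_{p}$ holds by block multiplication, the correspondence between positive adjacency eigenvalues of a bipartite graph and positive singular values of its biadjacency matrix is standard, and the rigid shift by $r$ preserves the ordering of the spectrum of $BB^{T}$, so the indexing matches for all $i\in[k]$ (note $k\le p$ since $BB^{T}$ is $p\times p$ and has at least $k$ positive eigenvalues by hypothesis). The one cosmetic remark is that the phrase ``since $Z$ is not connected to $Y$'' is not really what makes $MM^{T}=BB^{T}+CC^{T}$ true --- that is just block matrix arithmetic --- but nothing in the proof depends on it. The paper itself does not prove this lemma at all: it states it as an immediate corollary of Lemma~2.8 of Csikv\'ari's paper on integral trees, which is a characteristic-polynomial identity for the operation of attaching $r$ pendants to each vertex. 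So your route is genuinely different in that it is self-contained and works directly at the level of the biadjacency matrix and singular values; what the citation buys the paper is brevity, while what your argument buys is an elementary, verifiable proof tailored exactly to the one-sided pendant attachment used here.
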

Now we can state a stronger version of Conjecture \ref{adjcon1}.

\begin{conjecture}\label{adjcon2}
	Let $k\in \mathbb{N},k\geq 2$. There exists an integer $M_k$ such that for every $n\geq M_k$ and every tree $T$ on $n$ vertices, we have
		\[
		\sum_{i=1}^{k}\lambda_i(T)\leq \sum_{i=1}^{k}\lambda_i(T^{s}_{n,k}),
		\]
		where $0\leq s\leq k-1$, $n-k^2-1\equiv s\text{ mod }k$, and $T^{s}_{n,k}$ is obtained from $T^{s}_{k}$ by joining each vertex  of $\{v_1,\ldots,v_k\}$($\subseteq V(T^{s}_{k})$) to $\frac{n-k^2-1-s}{k}$ new pendant vertices.
\end{conjecture}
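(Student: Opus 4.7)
My plan is to compare $\sum_{i=1}^{k}\lambda_{i}(T)$ with $\sum_{i=1}^{k}\lambda_{i}(T^{s}_{n,k})$ by reducing $T$ to the conjectured extremal shape via spectrum-nondecreasing local moves, then applying the pendant lemma stated immediately before the conjecture to evaluate both sides in closed form. Since $T^{s}_{n,k}$ is obtained from $T^{s}_{k}$ by joining each of the $k$ ``hub'' vertices $v_{1},\dots,v_{k}$ (all lying in one bipartition class of the bipartite tree $T^{s}_{k}$) to $q=\tfrac{n-k^{2}-1-s}{k}$ new pendants, that lemma gives $\lambda_{i}(T^{s}_{n,k})=\sqrt{q+\lambda_{i}^{2}(T^{s}_{k})}$ for $i\in[k]$, so
\[
\sum_{i=1}^{k}\lambda_{i}(T^{s}_{n,k})=f_{T^{s}_{k}}(q),
\]
and the conjecture reduces to $\sum_{i=1}^{k}\lambda_{i}(T)\leq f_{T^{s}_{k}}(q)$ for all $n\geq M_{k}$.

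I would first identify $k$ ``hub'' vertices $u_{1},\dots,u_{k}$ in $T$ by peeling leaves to expose the skeleton, and then perform pendant-shifting: moving a pendant from a non-hub to a hub that lies in the same bipartition class. The goal is to show that each such move does not decrease $\sum_{i=1}^{k}\lambda_{i}$, so that after finitely many moves $T$ has the shape of $T^{s}_{n,k}$ — a bipartite core of at most $k^{2}+k$ vertices together with $q$ pendants attached at each hub. Once in this normal form with core $T_{0}$, the pendant lemma again gives $\sum_{i=1}^{k}\lambda_{i}(T)=f_{T_{0}}(q)$; the defining property of $T^{s}_{k}$ as the asymptotic maximizer of $f_{(\cdot)}$ on $\mathcal{T}_{k^{2}+1+s,k}$ then yields $f_{T_{0}}(q)\leq f_{T^{s}_{k}}(q)$ for $q\geq X_{0}$, i.e.\ for $n\geq M_{k}:=k^{2}+1+s+kX_{0}$. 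The case $\nu(T)<k$ is disposed of separately: the sign-symmetry of the tree spectrum yields $\sum_{i=1}^{k}\lambda_{i}(T)=\sum_{i=1}^{\nu(T)}\lambda_{i}(T)\leq \sqrt{(k-1)(n-1)}$ by Cauchy--Schwarz against $\sum_{i}\lambda_{i}^{2}=2(n-1)$, and since $f_{T^{s}_{k}}(q)\geq k\sqrt{q}=\sqrt{k(n-k^{2}-1-s)}$, direct comparison handles this case as soon as $n\geq k^{3}+ks+1$.

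The principal obstacle is the monotonicity step: showing that pendant-shifting between hubs does not decrease $\sum_{i=1}^{k}\lambda_{i}$ compares the top $k$ eigenvalues of two distinct bipartite adjacency matrices $B$ and $B'$ differing by a signed rank-$2$ update, which is not a positive semi-definite perturbation. A Courant--Fischer / Schur-complement analysis on the reduced $k\times k$ level (after the $qI_{k}$ absorption provided by the lemma) ought to suffice once the hubs have been correctly chosen, but the right invariant for this monotonicity, and the correct order of moves when pushing a pendant across a non-hub subdivision vertex of the same bipartition class, appear delicate. A secondary difficulty is that $T^{s}_{k}$ itself is defined only through an asymptotic comparison of Taylor expansions of $f_{(\cdot)}$; extracting an explicit description of $T^{s}_{k}$ — and hence an effective value of $M_{k}$ — will demand a separate combinatorial classification within $\mathcal{T}_{k^{2}+1+s,k}$.
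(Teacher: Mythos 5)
The statement you are addressing is stated in the paper as a conjecture (Conjecture \ref{adjcon2}); the paper offers no proof of it, only computational evidence drawn from the tree catalogues it cites, so there is no argument of the author's to compare yours against. Judged on its own terms, your proposal correctly handles the periphery of the problem. The application of the Csikv\'ari-type pendant lemma to $T^{s}_{n,k}$ is legitimate, because in $T^{s}_{k}$ the hubs $v_{1},\dots,v_{k}$ constitute an entire bipartition class (the subdivision vertices and the added pendants all lie in the other class), so $\lambda_{i}(T^{s}_{n,k})=\sqrt{q+\lambda_{i}^{2}(T^{s}_{k})}$ for $i\in[k]$ and the right-hand side is indeed $f_{T^{s}_{k}}(q)$. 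Your disposal of the case $\nu(T)\leq k-1$ via the spectral symmetry of trees, $\sum_{i=1}^{\nu}\lambda_{i}^{2}=n-1$, and Cauchy--Schwarz is also correct and yields the comparison for $n\geq k^{3}+ks+1$.

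The gap is the one you name yourself, and it is not a technical loose end but the entire content of the conjecture. You have no proof that a pendant-shifting move does not decrease $\sum_{i=1}^{k}\lambda_{i}$; no proof that a monotone sequence of such moves carries an arbitrary tree with $\nu(T)\geq k$ into a normal form to which the pendant lemma applies (that lemma needs the \emph{same} number of pendants at \emph{every} vertex of one bipartition class of the core, a configuration a generic tree is nowhere near); and no effective description of $T^{s}_{k}$ itself, which is defined only as an asymptotic maximizer. Such local moves are not monotone for the sum of the top $k$ eigenvalues in general --- the restriction $n\geq M_{k}$ in the statement already signals that small trees violate any naive monotonicity --- so the ``right invariant'' you defer to is precisely what a proof would have to supply. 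As written, the proposal is a reasonable research plan consistent with how one might attack the conjecture, but it is not a proof, and the paper contains none either.
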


\subsubsection{Laplacian Matrix: Sum of the Largest Eigenvalues}

For the largest Laplacian eigenvalue of trees we have the following well-known theorem(see \cite[Theorem 3.5]{zh} for a stronger result):
\begin{theorem}
	Let $n\in \mathbb{N}$ and $T$ be a tree on $n$ vertices, we have
	\[
	\mu_1(L_{P_{n}})\leq \mu_1(L_T)\leq \mu_1(L_{S_{n}})
	\]	
\end{theorem}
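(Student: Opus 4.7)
The plan is to prove the two inequalities separately, each via a classical spectral argument using results already stated in the excerpt.

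For the upper bound $\mu_1(L_T)\leq\mu_1(L_{S_n})$, I would first note that the Laplacian spectrum of $S_n$ is $\{0,1^{(n-2)},n\}$ by direct computation on its block structure, so $\mu_1(L_{S_n})=n$. Since $T$ is a spanning subgraph of $K_n$, the difference $L_{K_n}-L_T$ is itself a graph Laplacian, hence positive semi-definite. Applying Theorem \ref{summaj} (with $k=1$) to the decomposition $L_{K_n}=L_T+(L_{K_n}-L_T)$ yields $\mu_1(L_T)\leq\mu_1(L_{K_n})=n$, where the last equality uses $L_{K_n}=nI_n-J_n$.

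For the lower bound $\mu_1(L_{P_n})\leq\mu_1(L_T)$, I would dichotomize on $\Delta:=\Delta(T)$. If $\Delta\leq 2$ then $T=P_n$ and the inequality holds trivially. Otherwise $\Delta\geq 3$, and the strategy is to establish the sharper bound $\mu_1(L_T)\geq \Delta+1$ and then compare with $\mu_1(L_{P_n})$. To obtain $\mu_1(L_T)\geq \Delta+1$, fix a vertex $v$ of degree $\Delta$ with neighbors $u_1,\ldots,u_\Delta$ and look at the principal submatrix $B$ of $L_T$ indexed by $\{v,u_1,\ldots,u_\Delta\}$. Because $T$ is acyclic the $u_i$'s are pairwise non-adjacent, so $B$ has diagonal $(\Delta,d_{u_1},\ldots,d_{u_\Delta})$ with $d_{u_i}\geq 1$ and off-diagonal $-1$ only in the $(v,u_i)$-positions. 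Testing the Rayleigh quotient against $x=\Delta\,\boldsymbol{e}_v-\sum_{i=1}^{\Delta}\boldsymbol{e}_{u_i}$ gives $x^{T}Bx=\Delta^{3}+2\Delta^{2}+\sum_i d_{u_i}\geq \Delta(\Delta+1)^{2}$ and $x^{T}x=\Delta(\Delta+1)$, so $\mu_1(B)\geq \Delta+1$; the Interlacing Theorem (Theorem \ref{int}) then gives $\mu_1(L_T)\geq \mu_1(B)\geq \Delta+1\geq 4$.

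It remains to verify $\mu_1(L_{P_n})<4$ for every $n\geq 2$, which follows from the standard closed form $\mu_1(L_{P_n})=2+2\cos(\pi/n)$ (obtained by diagonalizing the tridiagonal Laplacian of the path; alternatively it is a direct consequence of the Anderson--Morley bound together with the fact that $P_n$ is not a quotient of a regular graph). Combining, $\mu_1(L_{P_n})<4\leq \Delta+1\leq \mu_1(L_T)$. I do not foresee a genuine obstacle; the one piece of bookkeeping is the Rayleigh-quotient computation on the principal submatrix, and this simplifies cleanly thanks to the tree property forcing the neighbors of $v$ to form an independent set.
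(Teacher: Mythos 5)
The paper does not actually prove this theorem: it is stated as a well-known fact with a pointer to \cite[Theorem 3.5]{zh} for a stronger result, so any self-contained argument is necessarily a different route, and yours is essentially sound. Your lower bound is the cleanest part: the Rayleigh-quotient computation checks out ($x^TBx=\Delta^3+2\Delta^2+\sum_i d_{u_i}\geq\Delta(\Delta+1)^2$ and $x^Tx=\Delta(\Delta+1)$, using that the neighbors of $v$ are independent in a tree), interlacing gives $\mu_1(L_T)\geq\Delta+1\geq 4$ when $\Delta\geq 3$, and $\mu_1(L_{P_n})=2+2\cos(\pi/n)<4$ closes the case split; the cases $\Delta\leq 2$ and $n\leq 3$ are trivial. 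The only point to repair is the citation direction in the upper bound: applying Theorem \ref{summaj} to $L_{K_n}=L_T+(L_{K_n}-L_T)$ literally yields $\mu_1(L_{K_n})\leq\mu_1(L_T)+\mu_1(L_{\overline{T}})$, which is the wrong way around. You should instead either write $L_T=L_{K_n}+(L_T-L_{K_n})$ and use that $L_T-L_{K_n}=-L_{\overline{T}}$ is negative semi-definite (so its largest eigenvalue is $0$), or simply invoke the Rayleigh characterization of Theorem \ref{trace} with $k=1$ together with $x^TL_Tx\leq x^TL_{K_n}x$ for all $x$. Either fix is one line, and with it the whole proof stands; compared with the paper's bare citation, your argument has the advantage of being elementary and of extracting the sharper intermediate bound $\mu_1(L_T)\geq\Delta(T)+1$ along the way.
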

In \cite{gzw} it is shown that for every tree $T$ of order $n$, $\sum_{i=1}^{2}\mu_i(L_T)\leq \sum_{i=1}^{2}\mu_i(L_{S_{n,2}})$, where $S_{n,2}$ is the tree shown below:

\begin{center}
	\definecolor{ududff}{rgb}{0.30196078431372547,0.30196078431372547,1.}
	\begin{tikzpicture}[scale=1.3,line cap=round,line join=round,>=triangle 45,x=1.0cm,y=1.0cm]
	\draw [line width=1.pt] (2.012760572694985,1.5) circle (0.9cm);
	\draw [line width=1.pt] (4.007240572694984,1.5415517180849538) circle (0.9cm);
	\draw [line width=1.pt] (4.,2.)-- (2.,2.);
	\draw [line width=1.pt] (2.,2.)-- (1.46,1.);
	\draw [line width=1.pt] (2.,2.)-- (2.52,1.);
	\draw [line width=1.pt] (4.,2.)-- (3.577873129421675,1.0061494273050153);
	\draw [line width=1.pt] (4.,2.)-- (4.47816035459567,1.02);
	\draw (3.8,1.3385631719846447) node[anchor=north west] {$\cdots$};
	\draw (1.8,1.3247125992896602) node[anchor=north west] {$\cdots$};
	\draw (1.46,1.1) node[anchor=north west] {$S_{\lceil\frac{n}{2}\rceil}$};
	\draw (3.55,1.15) node[anchor=north west] {$S_{\lfloor\frac{n}{2}\rfloor}$};
	\begin{scriptsize}
	\draw [fill=ududff] (2.,2.) circle (2.pt);
	\draw [fill=ududff] (4.,2.) circle (2.pt);
	\draw [fill=ududff] (1.46,1.) circle (2.pt);
	\draw [fill=ududff] (2.52,1.) circle (2.pt);
	\draw [fill=ududff] (3.577873129421675,1.0061494273050153) circle (2.pt);
	\draw [fill=ududff] (4.47816035459567,1.02) circle (2.pt);
	\end{scriptsize}
	\end{tikzpicture}  
\end{center}

Suppose that $k\in \mathbb{N}$ and $S_k$ is the star of order $k$ and $V(S_k)=\{v,v_1,\ldots,v_{k-1}\}$. Now, we add some new vertices to $v_i$, for all $i\in [k-1]$, such that $|d_{v_i}-d_{v_j}|\leq 1$ for all $i,j\in[k-1]$ and all new vertices have degree one. We denote the set of all such trees of degree $n$ by  $\mathcal{S}_{n,k}$.
	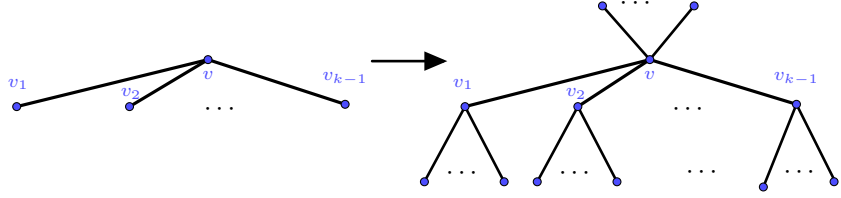
\begin{figure}[H]
	\centering
	\definecolor{ududff}{rgb}{0.30196078431372547,0.30196078431372547,1.}
	\begin{tikzpicture}[scale=1.,line cap=round,line join=round,>=triangle 45,x=1.0cm,y=1.0cm]
	\draw [line width=1.pt] (1.9800574450347974,2.4307471365250777)-- (1.440057445034797,1.4307471365250777);
	\draw [line width=1.pt] (1.9800574450347974,2.4307471365250777)-- (2.5000574450347997,1.4307471365250777);
	\draw [line width=1.pt] (3.4800574450347974,2.4307471365250777)-- (2.940057445034797,1.4307471365250777);
	\draw [line width=1.pt] (3.4800574450347974,2.4307471365250777)-- (4.0000574450347997,1.4307471365250777);
	\draw [line width=1.pt] (4.436608636510717,3.0560341861627247)-- (3.8072408829661954,3.770746434379706);
	\draw [line width=1.pt] (4.436608636510717,3.0560341861627247)-- (5.026091280124835,3.770746434379706);
	\draw [line width=1.pt] (6.386149427305012,2.461494273050155)-- (5.946149427305012,1.3614942730501547);
	\draw [line width=1.pt] (6.386149427305012,2.461494273050155)-- (6.886149427305012,1.4614942730501548);
	\draw (6.091943652220525,1.7594826436141127) node[anchor=north west] {$\cdots$};
	\draw (3.9149994733909635,4.011723702116658) node[anchor=north west] {$\cdots$};
	\draw (4.795056918425762,1.7594251985793136) node[anchor=north west] {$\cdots$};
	\draw (1.6063792074442107,1.7540803528341744) node[anchor=north west] {$\cdots$};
	\draw (3.1063792074442107,1.7540803528341744) node[anchor=north west] {$\cdots$};

	\draw [line width=1.2pt] (4.436608636510717,3.0560341861627247)-- (1.9800574450347974,2.4307471365250777);
	\draw [line width=1.2pt] (4.436608636510717,3.0560341861627247)-- (3.4800574450347974,2.4307471365250777);
	\draw [line width=1.2pt] (4.436608636510717,3.0560341861627247)-- (6.386149427305012,2.461494273050155);
	\draw (4.61206345730778,2.60459560278387) node[anchor=north west] {$\cdots$};
	\draw [->,line width=1.pt] (0.7447525363462352,3.0359315617931) -- (1.7447525363462352,3.0359315617931);
	
		\draw (-1.61206345730778,2.60459560278387) node[anchor=north west] {$\cdots$};
	
	\draw [line width=1.2pt] (-1.436608636510717,3.0560341861627247)-- (-3.9800574450347974,2.4307471365250777);
	\draw [line width=1.2pt] (-1.436608636510717,3.0560341861627247)-- (-2.4800574450347974,2.4307471365250777);
	\draw [line width=1.2pt] (-1.436608636510717,3.0560341861627247)-- (0.386149427305012,2.461494273050155);
	\begin{scriptsize}
	\draw [fill=ududff] (1.9800574450347974,2.4307471365250777) circle (1.5pt);
	\draw[color=ududff] (1.9587355070890407,2.7329020274754162) node {$v_1$};
	\draw [fill=ududff] (3.4800574450347974,2.4307471365250777) circle (1.5pt);
	\draw[color=ududff] (3.4587355070890407,2.6020274754162) node {$v_2$};
	\draw [fill=ududff] (4.436608636510717,3.0560341861627247) circle (1.5pt);
	\draw[color=ududff] (4.436608636510717,2.8560341861627247) node {$v$};
	\draw [fill=ududff] (6.386149427305012,2.461494273050155) circle (1.5pt);
	\draw[color=ududff] (6.385401237561867,2.799798591305509) node {$v_{k-1}$};
	\draw [fill=ududff] (1.440057445034797,1.4307471365250777) circle (1.5pt);
	\draw [fill=ududff] (2.5000574450347997,1.4307471365250777) circle (1.5pt);
	\draw [fill=ududff] (2.940057445034797,1.4307471365250777) circle (1.5pt);
	\draw [fill=ududff] (4.0000574450347997,1.4307471365250777) circle (1.5pt);
	\draw [fill=ududff] (3.8072408829661954,3.770746434379706) circle (1.5pt);
	\draw [fill=ududff] (5.026091280124835,3.770746434379706) circle (1.5pt);
	\draw [fill=ududff] (5.946149427305012,1.3614942730501547) circle (1.5pt);
	\draw [fill=ududff] (6.886149427305012,1.4314942730501548) circle (1.5pt);
	
	\draw [fill=ududff] (-3.9800574450347974,2.4307471365250777) circle (1.5pt);
	\draw[color=ududff] (-3.9587355070890407,2.7329020274754162) node {$v_1$};
	\draw [fill=ududff] (-2.4800574450347974,2.4307471365250777) circle (1.5pt);
	\draw[color=ududff] (-2.4587355070890407,2.6020274754162) node {$v_2$};
	\draw [fill=ududff] (-1.436608636510717,3.0560341861627247) circle (1.5pt);
	\draw[color=ududff] (-1.436608636510717,2.8560341861627247) node {$v$};
	\draw [fill=ududff] (0.386149427305012,2.461494273050155) circle (1.5pt);
	\draw[color=ududff] (0.385401237561867,2.799798591305509) node {$v_{k-1}$};
	
	\end{scriptsize}
	\end{tikzpicture}
\caption{A member of $\mathcal{S}_{n,k}$.}
\end{figure}

We conjecture that for an integer $k$, $2\le k$, the following statement:
\begin{conjecture}
	Let $k\in \mathbb{N}$. 
\begin{itemize}
	\item For $k\geq 2$, there exists an integer $m_{k}$ such that for every $n\geq m_k$ and every tree $T$ on $n$ vertices: $\sum_{i=1}^{k}\mu_i(L_{P_{n}})\leq \sum_{i=1}^{k}\mu_i(L_T)$;
	\item 
	for $k\geq 3$, there exists an integer $M_{k}$ such that for every $n\geq M_k$ and every tree $T$ on $n$ vertices, we have
	\[
	 \sum_{i=1}^{k}\mu_i(L_T)\leq \sum_{i=1}^{k}\mu_i(L_{S_{n,k}}),
	 \]
	 where $S_{n,k}\in \mathcal{S}_{n,k}$.
\end{itemize}

\end{conjecture}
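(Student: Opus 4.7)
The plan is to attack the lower bound (with extremal tree $P_n$) and the upper bound (with extremal family $\mathcal{S}_{n,k}$) separately, since the two extremal trees have very different structural features. Throughout I would rely on the convexity of $A\mapsto\sum_{i=1}^{k}\mu_{i}(L_A)$ on $Sym_{n}(\mathbb{R})$, the Ky Fan trace formula of Theorem \ref{trace}, the interlacing theorem (Theorem \ref{int}), and the weighted Brouwer-type bound for trees established in Theorem \ref{gbtree}.

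For the lower bound I would first write the nonzero Laplacian spectrum of $P_n$ explicitly as $\{2-2\cos(j\pi/n):j=1,\dots,n-1\}$, so that every such eigenvalue is strictly less than $4$ and in particular $\sum_{i=1}^{k}\mu_{i}(L_{P_n})<4k$ for every $n$. Viewing the path as the ``most subdivided'' tree, the natural strategy is to show that a suitable edge-subdivision operation does not increase $\sum_{i=1}^{k}\mu_{i}(L_T)$ once $n$ is large compared to $k$; combined with the fact that any tree can be reduced to $P_n$ by a sequence of subdivisions (after contractions that balance the order), this gives the bound. Concretely, I would analyse how the top $k$ Laplacian eigenvalues transform under the subdivision of a single edge, constructing explicit test vectors via Theorem \ref{trace} that are the signed indicators of pendant pairs and of local branching configurations, and I would use Cauchy interlacing on the signed edge-vertex incidence matrix to compare the two spectra.

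For the upper bound I would imitate the shifting or switching technique behind the proofs of Brouwer's conjecture on threshold graphs. I would define a local operation on trees, for example relocating a pendant attached to a peripheral vertex to one of the designated vertices $v_1,\dots,v_{k-1}$ of $\mathcal{S}_{n,k}$ nearer to the centre, and show via the Ky Fan representation together with convexity that this operation does not decrease $\sum_{i=1}^{k}\mu_{i}(L_T)$. Iterating drives an arbitrary tree to a member of $\mathcal{S}_{n,k}$, and a concluding invariance argument, based on the characteristic polynomial formula for generalised stars recorded in the remark following Corollary \ref{ggmstar}, shows that every element of $\mathcal{S}_{n,k}$ attains the same value of the sum, so the particular representative is immaterial.

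The main obstacle in both directions is the failure of $T\mapsto\sum_{i=1}^{k}\mu_{i}(L_T)$ to be monotone under natural one-edge modifications: some eigenvalues move up while others move down, and the sum can a priori shift in either direction. For the lower bound, the hardest step is to control how the subdivision of a single edge affects precisely the top $k$ eigenvalues in the regime $n\geq m_k$, and to identify $m_k$ explicitly so that the cumulative effect of many subdivisions has the right sign. For the upper bound the difficult step is the switching lemma itself: moving a pendant between two interior vertices of comparable degree interacts subtly with the spectrum, and a proof of monotonicity will likely require a new Laplacian interlacing-type inequality tailored to pendant moves, which is where I expect the bulk of the technical work to lie.
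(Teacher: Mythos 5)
The statement you are addressing is not proved in the paper at all: it appears in Section 6 as an open \emph{conjecture}, supported only by computational evidence from the tree catalogues cited there. So there is no ``paper's proof'' to compare against, and the relevant question is whether your proposal actually closes the problem. It does not. What you have written is a research plan whose load-bearing steps are precisely the statements you acknowledge you cannot yet prove. For the lower bound, the entire argument rests on the claim that subdividing an edge does not increase $\sum_{i=1}^{k}\mu_i(L_T)$ for $n$ large relative to $k$; you give no proof of this, and the reduction ``any tree can be reduced to $P_n$ by a sequence of subdivisions (after contractions that balance the order)'' is not well defined, since subdivision changes the order $n$ while the conjecture compares trees of the \emph{same} order. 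The observation that $\sum_{i=1}^{k}\mu_i(L_{P_n})<4k$ is true but carries no information about other trees. For the upper bound, the switching lemma (moving a pendant toward the centre does not decrease the partial eigenvalue sum) is exactly the open content of the conjecture; you say yourself that proving it ``will likely require a new Laplacian interlacing-type inequality,'' which is an admission that the step is missing, not a proof of it.

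Two further concrete problems. First, your concluding claim that every member of $\mathcal{S}_{n,k}$ attains the same value of $\sum_{i=1}^{k}\mu_i$ is unsupported: the characteristic polynomial in the remark after Corollary \ref{ggmstar} is for weighted stars $S_n$, not for the trees in $\mathcal{S}_{n,k}$, which are stars with pendant vertices further attached to $v_1,\dots,v_{k-1}$; distinct members of $\mathcal{S}_{n,k}$ (differing in how the $n-k$ pendants are distributed subject to the degree-balance condition) are generally not cospectral, and at best one can hope their top-$k$ Laplacian sums agree, which again would need proof. Second, the convexity of $A\mapsto\sum_{i=1}^{k}\mu_i(L_A)$ and Theorem \ref{gbtree} operate on the \emph{weights} of a fixed underlying graph; they say nothing about comparing two different tree topologies on $n$ vertices, so invoking them here does not advance either direction. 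In short, the proposal identifies plausible lines of attack but establishes none of them; the conjecture remains open after your argument exactly as it was before it.
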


\subsubsection{Laplacian Matrix: Sum of the Smallest Eigenvalues}

For the smallest nonzero Laplacian eigenvalue of trees we have:
\begin{theorem}\cite{fi}
	Let $n\in \mathbb{N}$ and $T$ be a tree on $n$ vertices, we have
	\[
	\mu_{n-1}(L_{P_{n}})\leq \mu_{n-1}(L_T)\leq \mu_{n-1}(L_{S_{n}})
	\]	
\end{theorem}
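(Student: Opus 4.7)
The plan is to treat the upper and lower bounds separately: the upper bound is a short Rayleigh--quotient argument, while the lower bound is the real content and is handled by a tree-transformation argument.

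For the upper bound, a direct computation shows that the Laplacian spectrum of $S_n$ is $\{0, \underbrace{1,\ldots,1}_{n-2}, n\}$, so $\mu_{n-1}(L_{S_n})=1$ for $n\geq 2$. To show $\mu_{n-1}(L_T)\leq 1$ for any tree $T$ on $n\geq 3$ vertices, pick any non-leaf vertex $u$ of $T$ and let $A, B$ be two distinct components of $T-u$. Define $\boldsymbol{x}\in\mathbb{R}^n$ by $x_i=|B|$ for $i\in A$, $x_i=-|A|$ for $i\in B$, and $x_i=0$ elsewhere; then $\boldsymbol{x}\perp\boldsymbol{j}_n$, and only the two edges from $u$ into $A$ and $B$ contribute to the quadratic form, yielding
\[
\boldsymbol{x}^{T}L_T\boldsymbol{x}=|A|^2+|B|^2,\qquad \boldsymbol{x}^{T}\boldsymbol{x}=|A||B|(|A|+|B|).
\]
The Rayleigh bound then gives $\mu_{n-1}(L_T)\leq (|A|^2+|B|^2)/(|A||B|(|A|+|B|))\leq 1$, where the last step rearranges to the elementary identity $|A||B|(|A|+|B|)-(|A|^2+|B|^2)=|A|^2(|B|-1)+|B|^2(|A|-1)\geq 0$.

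For the lower bound $\mu_{n-1}(L_{P_n})\leq \mu_{n-1}(L_T)$, the plan is to show that if $T\neq P_n$ there exists a tree $T^{\ast}$ on the same $n$ vertices with strictly fewer branch vertices and satisfying $\mu_{n-1}(L_{T^{\ast}})\leq \mu_{n-1}(L_T)$. The natural operation is grafting: choose a branch vertex $v$ of $T$, pick two pendant paths at $v$ of lengths $a\geq b\geq 1$, and replace them by one pendant path of length $a+b$ attached at $v$. Iterating this operation eliminates all branch vertices and terminates at $P_n$. To prove the monotonicity of $\mu_{n-1}$ under grafting, I would work with the bottleneck matrix $M_v(T)=(L_T^{(v)})^{-1}$, where $L_T^{(v)}$ is the principal submatrix of $L_T$ obtained by deleting the row and column indexed by $v$; for an appropriate root $v$ (the characteristic vertex/edge in Fiedler's sense), the Perron eigenvalue of $M_v(T)$ equals $1/\mu_{n-1}(L_T)$. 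The inequality $\mu_{n-1}(L_{T^{\ast}})\leq \mu_{n-1}(L_T)$ then becomes a statement that the Perron eigenvalue of the bottleneck matrix increases under grafting, which follows from an entrywise inequality of nonnegative matrices (paths from any vertex to $v$ in $T^{\ast}$ are no shorter than in $T$) combined with Perron--Frobenius.

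The main obstacle is precisely this monotonicity step. A direct substitution of the Fiedler vector of $T^{\ast}$ as a test vector for $L_T$, or vice versa, fails to compare Dirichlet energies in the right direction because grafting simultaneously removes one edge and adds another, and both changes affect the Rayleigh quotient. Passing to bottleneck matrices converts the question to an entrywise comparison of nonnegative matrices, which seems cleaner, but it requires Fiedler's classification of trees into Type I (characteristic vertex) and Type II (characteristic edge) and a careful choice of the root $v$ so that the Perron eigenvalue of $M_v$ is the correct reciprocal. This is where the bulk of the technical work in Fiedler's original argument lies.
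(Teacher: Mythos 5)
The paper itself gives no proof of this statement; it is quoted from Fiedler \cite{fi}, so your attempt can only be judged on its own merits. Your upper bound argument is correct and complete: the test vector supported on two components $A,B$ of $T-u$ is orthogonal to $\boldsymbol{j}_n$, only the two edges at $u$ contribute to the quadratic form, and the inequality $|A|^2(|B|-1)+|B|^2(|A|-1)\geq 0$ closes the estimate. (For $n=2$ the theorem is vacuous since $P_2=S_2$, so the fact that $\mu_1(L_{S_2})=2$ rather than $1$ is harmless.)

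The lower bound, however, contains a genuine gap, and you have located it yourself: the monotonicity of $\mu_{n-1}$ under grafting is asserted, not proved, and the difficulty is not merely bookkeeping. Entrywise domination of bottleneck matrices does give that the Perron root satisfies $\rho(M_v(T^{\ast}))\geq\rho(M_v(T))$ for a fixed root $v$, but Cauchy interlacing only yields $1/\rho(M_v(T))\leq\mu_{n-1}(L_T)$ for \emph{every} choice of $v$, with equality only when $v$ is a characteristic vertex of a Type~I tree. Hence the intended chain $\mu_{n-1}(L_{T^{\ast}})\leq 1/\rho(M_v(T^{\ast}))\leq 1/\rho(M_v(T))\leq\mu_{n-1}(L_T)$ breaks at its first link unless $v$ is simultaneously the correct root for $T^{\ast}$ --- and grafting can relocate the characteristic vertex or edge. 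This is precisely the Type~I/Type~II case analysis you defer, and without it the induction on the number of branch vertices does not close. A much shorter route to the lower bound is Fiedler's own inequality $\mu_{n-1}(L_G)\geq 2\,\eta(G)\bigl(1-\cos\tfrac{\pi}{n}\bigr)$, where $\eta(G)$ denotes edge connectivity: a tree has $\eta(T)=1$ and $2\bigl(1-\cos\tfrac{\pi}{n}\bigr)=\mu_{n-1}(L_{P_n})$, so the desired bound is immediate once that inequality is in hand, avoiding the grafting machinery entirely.
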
	

Suppose that $k\in \mathbb{N}$ and $T$ is a tree with $k$ vertices $v_1,\ldots,v_{k}$ of degree $1$. For a center $v$ of $T$ we have  $|d(v,v_i)-d(v,v_j)|\leq 1$ for all $i,j\in[k]$. We denote the set of all such trees of degree $n$ by $\mathcal{P}_{n,k}$.

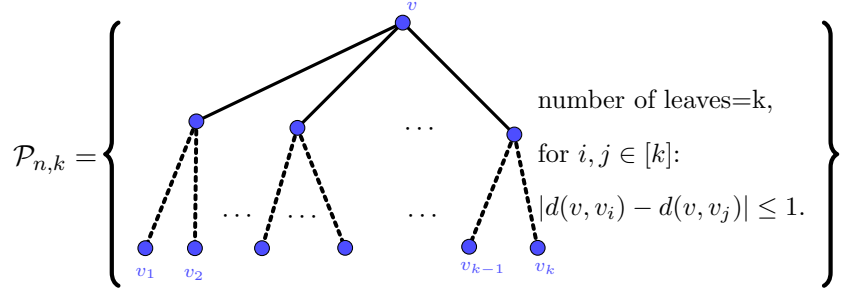
\begin{figure}[H]
	\centering
	\definecolor{ududff}{rgb}{0.30196078431372547,0.30196078431372547,1.}
	\begin{tikzpicture}[scale=1.4,line cap=round,line join=round,>=triangle 45,x=1.0cm,y=1.0cm]
	\draw (5.067231755076255,6.633907315584403) node[anchor=north west] {$\begin{array}{l}
		\text{number of leaves=k,}\\
		\text{for $i,j\in[k]$:}\\
		|d(v,v_i)-d(v,v_j)|\leq 1.
		\end{array}$};
	\draw [line width=1.2pt] (4.,7.)-- (2.040459560278391,6.061608460974373);
	\draw [line width=1.5pt,dotted] (2.040459560278391,6.061608460974373)-- (1.5556895159539323,4.856608636510717);
	\draw [line width=1.5pt,dotted] (2.040459560278391,6.061608460974373)-- (2.0266089875834066,4.856608636510717);
	\draw [line width=1.2pt] (4.,7.)-- (3.,6.);
	\draw [line width=1.5pt,dotted] (3.,6.)-- (2.663735331552695,4.856608636510717);
	\draw [line width=1.5pt,dotted] (3.,6.)-- (3.45322,4.86);
	\draw [line width=1.2pt] (4.,7.)-- (5.059884407785019,5.936953306719512);
	\draw [line width=1.5pt,dotted] (5.059884407785019,5.936953306719512)-- (4.630516654240499,4.870459209205701);
	\draw [line width=1.5pt,dotted] (5.059884407785019,5.936953306719512)-- (5.28149,4.86);
	\draw (3.924137446796283,6.130861324449295) node[anchor=north west] {$\cdots$};
	\draw (3.951838592186252,5.306780971615114) node[anchor=north west] {$\cdots$};
	\draw (2.8091950673674287,5.299826962750222) node[anchor=north west] {$\cdots$};
	\draw (2.1928158599232184,5.305229253530161) node[anchor=north west] {$\cdots$};
	\begin{scriptsize}
	\draw [fill=ududff] (4.,7.) circle (2.pt);
	\draw[color=ududff] (4.090344319136097,7.159682999090536) node {$v$};
	\draw [fill=ududff] (2.040459560278391,6.061608460974373) circle (2.pt);
	\draw [fill=ududff] (1.5556895159539323,4.856608636510717) circle (2.pt);
	\draw[color=ududff] (1.5626148023014226,4.621148900695979) node {$v_1$};
	\draw [fill=ududff] (2.0266089875834066,4.856608636510717) circle (2.pt);
	\draw[color=ududff] (2.019683701235912,4.6072983280009945) node {$v_2$};
	\draw [fill=ududff] (3.,6.) circle (2.pt);
	\draw [fill=ududff] (2.663735331552695,4.856608636510717) circle (2.pt);
	\draw [fill=ududff] (3.45322,4.86) circle (2.pt);
	\draw [fill=ududff] (5.059884407785019,5.936953306719512) circle (2.pt);
	\draw [fill=ududff] (4.630516654240499,4.870459209205701) circle (2.pt);
	\draw[color=ududff] (4.762097094842846,4.664252336865887) node {$v_{k-1}$};
	\draw [fill=ududff] (5.28149,4.86) circle (2.pt);
	\draw[color=ududff] (5.35767172072718,4.66700618780932) node {$v_k$};
	\end{scriptsize}
	\draw[color=black] (.7,5.7) node {{\large $\mathcal{P}_{n,k}=$}};
	\draw [decoration={brace,amplitude=0.5em},decorate,ultra thick,black](1.3,4.5) -- (1.3,7);
	\draw [decoration={brace,amplitude=0.5em},decorate,ultra thick,black](8.,7) -- (8.,4.5);
	
	\end{tikzpicture}
	\caption{The set $\mathcal{P}_{n,k}$}
\end{figure}
For example, for $k=3$, we have the following tree.
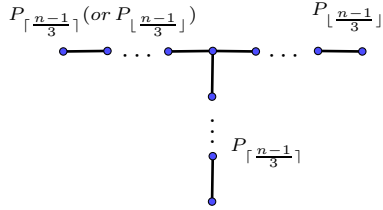
\begin{figure}[H]
	\centering
	\definecolor{ududff}{rgb}{0.30196078431372547,0.30196078431372547,1.}
	\definecolor{cqcqcq}{rgb}{0.7529411764705882,0.7529411764705882,0.7529411764705882}
	\begin{tikzpicture}[scale=1.4,line cap=round,line join=round,>=triangle 45,x=1.0cm,y=1.0cm]
	\draw [line width=1.pt] (4.,5.)-- (4.408907491120748,4.995114363460563);
	\draw [line width=1.pt] (4.,5.)-- (3.9933903102712116,4.565746609916042);
	\draw [line width=1.pt] (3.5778731294216755,4.995114363460563)-- (4.,5.);
	\draw [line width=1.pt] (4.,4.)-- (3.9933903102712116,3.568505375877154);
	\draw [line width=1.pt] (3.,5.)-- (2.5806318953827887,4.995114363460563);
	\draw [line width=1.pt] (5.,5.)-- (5.419999297854619,4.995114363460563);
	\draw (3.046217,5.1) node[anchor=north west] {$\cdots$};
	\draw (4.436155547,5.1) node[anchor=north west] {$\cdots$};
	\draw (3.872733,4.5911) node[anchor=north west] {$\vdots$};
	
	\begin{scriptsize}
	\draw [fill=ududff] (4.,5.) circle (1.pt);
	\draw [fill=ududff] (3.5778731294216755,4.995114363460563) circle (1.pt);
	\draw [fill=ududff] (4.408907491120748,4.995114363460563) circle (1.pt);
	\draw [fill=ududff] (3.9933903102712116,4.565746609916042) circle (1.pt);
	\draw [fill=ududff] (4.,4.) circle (1.pt);
	\draw [fill=ududff] (3.9933903102712116,3.568505375877154) circle (1.pt);
	\draw [fill=ududff] (3.,5.) circle (1.pt);
	\draw [fill=ududff] (2.5806318953827887,4.995114363460563) circle (1.pt);
	\draw [fill=ududff] (5.,5.) circle (1.pt);
	\draw [fill=ududff] (5.419999297854619,4.995114363460563) circle (1.pt);
	\draw[color=black] (4.5197120726806235,4.034010626381) node {$P_{\lceil\frac{n-1}{3}\rceil}$};
	\draw[color=black] (5.28758555317499,5.327528108140193) node {$P_{\lfloor\frac{n-1}{3}\rfloor}$};
	\draw[color=black] (2.95470838641737,5.285976390055239) node {$P_{\lceil\frac{n-1}{3}\rceil}(or\,P_{\lfloor\frac{n-1}{3}\rfloor})$};
	\end{scriptsize}
	\end{tikzpicture}
	\caption{The tree $P_{n,3}$.}
\end{figure}

The sum of the $k$ smallest Laplacian eigenvalues of graphs is a concave function.
We conjecture that for an integer $k$, the following statement for $\displaystyle\sum_{i=n-k+1}^{n}\mu_i(L_{T})=\mu_{n-k+1}(L_{T})+\cdots+\mu_{n}(L_{T})$ :
\begin{conjecture}
	Let $k\in \mathbb{N},k\geq 3$. 
	\begin{itemize}
		\item There exists an integer $M_{k}$ such that for every $n\geq M_{k}$ and every tree $T$ on $n$ vertices: $\displaystyle\sum_{i=n-k+1}^{n}\mu_i(L_{T})\leq \displaystyle\sum_{i=n-k+1}^{n}\mu_i(L_{S_{n}})$;
		\item 
		there exists an integer $m_{k}$ such that for every $n\geq m_k$ and every tree $T$ on $n$ vertices, we have
		\[
\displaystyle\sum_{i=n-k+1}^{n}\mu_i(L_{P_{n,k}})	\leq	\displaystyle\sum_{i=n-k+1}^{}\mu_i(L_T),
		\]
		where $P_{n,k}\in\mathcal{P}_{n,k}$. 	
	\end{itemize}
\end{conjecture}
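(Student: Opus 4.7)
The overall strategy is to attack both inequalities through the variational characterization from Theorem~\ref{trace}, namely
\[
\sum_{i=n-k+1}^{n}\mu_{i}(L_T) \;=\; \min_{U\in\mathbb{R}^{n\times k},\,U^{T}U=I_k}\mathrm{tr}(U^{T}L_{T}U).
\]
Since $\boldsymbol{j}_n$ is a $0$-eigenvector of every Laplacian, the first column of the optimal $U$ may be fixed at $\boldsymbol{j}_n/\sqrt{n}$, contributing $0$ to the trace and reducing each bound to exhibiting (for the upper bound) or bounding from below (for the lower bound) the trace on $k-1$ further orthonormal vectors in $\boldsymbol{j}_n^{\perp}$. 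The two directions then pull apart into a pendant-vertex analysis and a spider-balancing analysis.

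For the upper bound $\sum_{i=n-k+1}^{n}\mu_i(L_T)\leq k-1=\sum_{i=n-k+1}^{n}\mu_i(L_{S_n})$, the plan is to exploit pendant structure. If a vertex $v\in V(T)$ has pendant neighbors $u_1,\ldots,u_{p_v}$, then each $\boldsymbol{e}_{u_1}-\boldsymbol{e}_{u_j}$ is an eigenvector of $L_T$ with eigenvalue $1$; Gram--Schmidt within each such local $1$-eigenspace, combined with the disjointness of supports across distinct quasi-pendant centers, produces $p(T)-q(T)$ orthonormal test vectors of Rayleigh quotient exactly $1$, each automatically orthogonal to $\boldsymbol{j}_n$ (a Faria-type count). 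When $p(T)-q(T)\geq k-1$ the bound is immediate. Otherwise $T$ is ``thin'', and a counting argument on $n\geq M_k$ forces $T$ to contain an induced internal path of length $\gtrsim n/k$; on that path one completes the test system by discrete Fourier cosine vectors (orthogonal to the pendant-differences by disjoint supports), each with Rayleigh quotient $O(k^{2}/n^{2})$. Summing the contributions keeps the trace below $k-1+o(1)$, and choosing $M_k$ to absorb the error finishes this direction.

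For the lower bound $\sum_{i=n-k+1}^{n}\mu_i(L_{P_{n,k}})\leq \sum_{i=n-k+1}^{n}\mu_i(L_T)$, the objective is a concave function of $L_T$, so the minimizer should be an extreme tree, and the conjectured minimizer $P_{n,k}$ is the most balanced $k$-legged spider. I would proceed in two stages. First, a sequence of Kelmans-type edge rotations (combined with the interlacing theorem applied to principal submatrices obtained by contracting non-leaf branches) reduces $T$ to a generalized spider with at most $k$ legs, monotonically non-increasing $\sum_{i=n-k+1}^{n}\mu_i$ at each step. Second, on the combinatorial family of $k$-legged spiders of total edge-length $n-1$, show that the sum of the $k$ smallest Laplacian eigenvalues is a Schur-concave function of the leg-length vector $(\ell_1,\ldots,\ell_k)$; this forces the minimum at the balanced vector $\bigl(\lceil (n-1)/k\rceil,\ldots,\lfloor (n-1)/k\rfloor\bigr)$, which defines $P_{n,k}$.

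The main obstacle I anticipate is the Schur-concavity step. In the continuum limit, the Laplacian of a metric spider cleanly separates into symmetric (Neumann) modes on a single leg and antisymmetric (Dirichlet--Neumann) modes with multiplicity $k-1$, and one can check analytically that transferring length from a long leg to a short leg lowers the sum of the $k$ lowest eigenvalues; transferring this to the discrete setting, however, requires controlling the mixing of the nonzero symmetric mode with the antisymmetric modes under a single-edge transfer, which does not follow from interlacing alone and seems to need an explicit second-difference estimate on the spider's characteristic polynomial. A secondary obstacle is pinning down the explicit thresholds $M_k$ and $m_k$: the asymptotic error estimates naturally give $M_k, m_k$ of order $k^2$ (or polynomial in $k$), but small-$n$ exceptional trees (for which irregular configurations may briefly beat $S_n$ or $P_{n,k}$) have to be eliminated by a finite case check.
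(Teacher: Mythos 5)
This statement is posed in the paper as a \emph{conjecture}: the author offers no proof, only supporting computations from the tree catalogues cited in Section 6. So there is no proof in the paper to compare yours against, and your text should be judged as an attempted resolution of an open problem. As written it is a strategy outline rather than a proof, and both halves contain genuine gaps. For the upper bound, the dichotomy you rely on is false: if $p(T)-q(T)<k-1$ (few pendant-difference eigenvectors of eigenvalue $1$), it does \emph{not} follow that $T$ contains an induced path of length on the order of $n/k$. Take the subdivision of a star, i.e.\ a spider with $m$ legs each of length $2$, so $n=2m+1$: here $p=q=m$, hence $p-q=0<k-1$ for every $k\geq 3$, yet the longest induced path has only $5$ vertices. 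The conclusion $\sum_{i=n-k+1}^{n}\mu_i(L_T)\leq k-1$ still holds for that tree (one can use test vectors supported on whole legs, with Rayleigh quotient $1/2$), but your construction does not produce the required orthonormal frame, so the case analysis does not close. A correct argument would need test vectors adapted to arbitrary trees with small $p-q$, not just to those containing a long path.

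For the lower bound you candidly flag the two load-bearing steps --- monotonicity of $\sum_{i=n-k+1}^{n}\mu_i$ under Kelmans-type rotations reducing $T$ to a $k$-legged spider, and Schur-concavity of that sum in the leg-length vector --- and neither is established. Interlacing (Theorem~\ref{int}) controls individual eigenvalues under vertex deletion but does not by itself give monotonicity of the partial sum of the smallest eigenvalues under edge rotation, and the continuum (quantum-graph) heuristic for the spider does not transfer to the discrete Laplacian without the explicit estimates you acknowledge are missing. Until those two lemmas are proved (and the finite set of small-$n$ exceptions is actually checked, since the conjecture is only claimed for $n\geq m_k$), the second bullet remains open. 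In short: the variational framing via Theorem~\ref{trace} and the Faria-type pendant count are sound starting points, but the argument as a whole does not yet constitute a proof of either inequality.
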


\subsection{Bounds for Graphs with Given Edges}
For given positive integers $n$ and $e$, the sum of the largest (and smallest) adjacency eigenvalues of graphs of order $n$ and the number of edges $e$ is considered in this section. The set of all simple graphs on $n$ vertices and $e$ edges is denoted by $G(n,e)$.

The extremal graphs for the largest eigenvalue of graphs in  $G(n,e)$ are considered widely (see \cite{bhof},\cite{bs},\cite{oad}, and \cite{ro}).
The following theorem for the largest adjacency eigenvalue of graphs in $G(n,e)$ is conjectured by Brualdi and Hoffman \cite{bhof} and it is proved by Rowlinson \cite{ro}.
\begin{theorem}\cite{ro}
	Let $n,e\in \mathbb{N}$ and $e=\binom{t}{2}+s$ for some integers $t,s$ and $s<t$. If $G$ is a graph of order $n$ with $e$ edges, then
	\[
	\lambda_1(G)\leq \lambda_1(g(n,e)),
	\]	
	where $g(n,e)$ is obtained from the complete graph $K_t$ by adding a new 
	vertex and $s$ new edges and $n-t-1$ isolated vertices.
	\begin{figure}[H]
		\centering
\definecolor{ududff}{rgb}{0.30196078431372547,0.30196078431372547,1.}
\begin{tikzpicture}[scale=1.5,line cap=round,line join=round,>=triangle 45,x=1.0cm,y=1.0cm]

\draw [line width=1.pt] (4.60241760647923,3.6564944485864985) circle (0.7cm);
\draw (4.1872983280009946,3.9286202659467495) node[anchor=north west] {\Large $K_{t}$};
\draw (4.852125817360252,3.9978731294216723) node[anchor=north west] {$\vdots$};
\draw [line width=1.pt] (5.793964760619201,3.762413393606935)-- (4.9767809716151135,3.2776433492824752);
\draw [line width=1.pt] (5.793964760619201,3.762413393606935)-- (4.93522925353016,4.094827138286564);
\draw [line width=1.pt] (2.952296046028495,3.623907666657089) circle (0.7cm);
\draw (2.4,3.9701719840317033) node[anchor=north west] {\Large $\overline{K_{n-t-1}}$};
\draw (1.,3.8301719840317033) node[anchor=north west] {\Large $g(n,e):$};
\begin{scriptsize}
\draw [fill=ududff] (4.93522925353016,4.094827138286564) circle (1.5pt);
\draw[color=ududff] (4.7967235265803145,4.1033041427190105) node {\large $s$};
\draw [fill=ududff] (4.9767809716151135,3.2776433492824752) circle (1.5pt);
\draw[color=ududff] (4.7967235265803145,3.3538214991048902) node {1};
\draw [fill=ududff] (5.793964760619201,3.762413393606935) circle (1.5pt);
\end{scriptsize}
\end{tikzpicture}
	\end{figure}
\end{theorem}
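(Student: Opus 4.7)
The plan is to prove this by combining a Perron-eigenvector edge-switching (Kelmans-type) argument with an induction that peels off a universal vertex. Let $G\in G(n,e)$ attain $\max\lambda_1$, and let $\boldsymbol{x}>0$ be a Perron eigenvector supported on the component $H$ of $G$ realising $\lambda_1(G)$. Isolated vertices do not affect the spectral radius, and if $G$ had two non-trivial components then one could transfer an edge from the component of smaller spectral radius to a Perron-maximal vertex of the other, strictly increasing $\lambda_1$; hence we may assume $G$ has a unique non-trivial component $H$ on $n'\le n$ vertices, with the remaining $n-n'$ vertices isolated. Order the vertices of $H$ so that $x_1\ge x_2\ge\cdots\ge x_{n'}>0$.

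The key lemma is the following switching estimate: if $uw\in E(H)$, $vw\notin E(H)$ with $u,v,w$ distinct and $x_v>x_u$, then $G':=G-uw+vw\in G(n,e)$ satisfies
\[
\lambda_1(G')\;\ge\;\frac{\boldsymbol{x}^{T} A(G')\boldsymbol{x}}{\boldsymbol{x}^{T}\boldsymbol{x}}\;=\;\lambda_1(G)+\frac{2 x_w(x_v-x_u)}{\boldsymbol{x}^{T}\boldsymbol{x}}\;>\;\lambda_1(G),
\]
contradicting extremality. Consequently the neighbourhoods of $H$ are nested: whenever $x_v>x_u$ one has $N_H(v)\setminus\{u\}\supseteq N_H(u)\setminus\{v\}$. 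In particular the degree sequence of $H$ is monotone with the Perron ordering, and $H$ is a threshold graph.

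It remains to identify, among threshold graphs on $n'$ non-isolated vertices with $e$ edges, the unique one maximising $\lambda_1$. A further application of the switching lemma to the vertex $v_1$ of maximum Perron weight forces $v_1$ to be adjacent to every other vertex of $H$: any non-neighbour $u$ of $v_1$ would admit an admissible swap against a neighbour of some other vertex, strictly increasing $\lambda_1$. Deleting the universal vertex $v_1$ leaves a graph $H'$ on $n'-1$ vertices with $e-(n'-1)$ edges, and induction on $n'$ (with the base case $n'\le 2$ being trivial) identifies $H'$ with $g(n'-1,\,e-n'+1)$. Restoring $v_1$ and matching the edge count against $e=\binom{t}{2}+s$ with $s<t$ shows $H$ is precisely the non-isolated part of $g(n,e)$.

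The main obstacle is the last step. The crude switching bound does not by itself separate different threshold graphs of the same order and edge count, so one must show that extremality actually forces a universal vertex in $H$. This is where the hypothesis $s<t$ enters non-trivially: it guarantees that the colex packing of edges terminates cleanly inside a single clique-plus-pendant structure, so the inductive descent remains consistent. One also needs to verify the equality case of the switching lemma (i.e.\ rule out alternative extrema arising when $x_v=x_u$) by exploiting the positivity of $\boldsymbol{x}$ and the Perron equation $\lambda_1 x_v=\sum_{u\sim v}x_u$ to propagate equality of Perron weights into equality of neighbourhoods.
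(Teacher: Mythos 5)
The paper does not prove this statement: it is quoted verbatim from Rowlinson \cite{ro} as the resolution of the Brualdi--Hoffman conjecture, so there is no internal proof to compare against. Judged on its own terms, your proposal reproduces correctly the standard first half of the argument (isolated vertices are harmless, an extremal graph has a single non-trivial component, and the Perron-eigenvector switching $G-uw+vw$ with $x_v>x_u$ forces nested neighbourhoods, i.e.\ the non-trivial component $H$ is a threshold graph). That much is routine and was already known before Rowlinson; note also that once $H$ is threshold and has no isolated vertices, the existence of a universal vertex is automatic from the threshold construction, so your ``further application of the switching lemma'' is not where the difficulty lies.

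The genuine gap is the inductive descent. From the extremality of $H$ you cannot conclude that $H'=H-v_1$ is extremal for its own vertex and edge count, which is what your appeal to the induction hypothesis requires. For the join $K_1\vee H'$ the spectral radius is the largest root of $\lambda=\boldsymbol{j}^{T}(\lambda I-A(H'))^{-1}\boldsymbol{j}=\sum_i\frac{(\boldsymbol{j}^{T}\boldsymbol{\xi}_i)^{2}}{\lambda-\lambda_i(H')}$, which depends on the whole main spectrum of $H'$ (eigenvalues and main angles), not on $\lambda_1(H')$ alone; so maximising $\lambda_1(H')$ and maximising $\lambda_1(K_1\vee H')$ are a priori different problems, and monotonicity is exactly what must be proved. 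Moreover the candidates left after the threshold reduction have \emph{different} numbers of non-isolated vertices (e.g.\ for $e=5$ one must compare $K_4$ minus an edge with stars and other threshold graphs on more vertices), and your induction on $n'$ never compares across these. This comparison of threshold graphs with the same $e$ but different shapes is precisely the hard content of Rowlinson's paper, carried out by estimating characteristic polynomials of the quotient (divisor) matrices of the candidate threshold graphs; the remark that ``$s<t$ guarantees that the colex packing terminates cleanly'' is an assertion, not an argument, and does not close this gap.
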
	
We conjecture that for an integer $k$, we have the following statement:
\begin{conjecture}\label{adjmax}
	Let $k,n\in \mathbb{N}$, $2\leq k$, and $G$ be a graph of order $n$ with $e$ edges.  If $t\leq \lfloor \frac{n}{k}\rfloor$, $e= (k-1)\displaystyle\binom{t}{2}+\displaystyle\binom{t-1}{2}+s$, and $s\in [t]$, then there exists an integer $e_t<t$ such that for $s\geq e_t$ we have  $$\sum_{i=1}^{k}\lambda_i(G)\leq (k-1)(t-1)+\lambda_{1}(g(t,\binom{t-1}{2}+s)).$$

	\begin{figure}[H]
		\centering
		\begin{tikzpicture}[scale=1.3,line cap=round,line join=round,>=triangle 45,x=1.0cm,y=1.0cm]
		\draw [line width=1.pt] (5.540977267736963,2.484195593976467) circle (0.8cm);
		\draw (5.2,2.75132158687306) node[anchor=north west] {$K_t$};
		\draw [line width=1.pt] (7.540977267736963,2.484195593976467) circle (0.8cm);
        \draw (6.7,2.75132158687306) node[anchor=north west] {\small $g(t,\binom{t-1}{2}+s)$};
		\draw [line width=1.pt] (1.965316825373121,2.4844051427859) circle (0.8cm);
		\draw (1.597533683716,2.7236204414830905) node[anchor=north west] {$K_t$};
		\draw (3.5224708386417367,2.709769868788106) node[anchor=north west] {$\cdots$};
		\draw (1.9019538333285457,1.7) node[anchor=north west] {$1$};
		\draw (5.2695330671951,1.7) node[anchor=north west] {$k-1$};
		\draw (7.51695330671951,1.7) node[anchor=north west] {$k$};
		\draw (3.536321411336721,1.54) node[anchor=north west] {$\cdots$};
		\end{tikzpicture}
		\caption{The extremal graph of Conjecture \ref{adjmax}.}
	\end{figure}
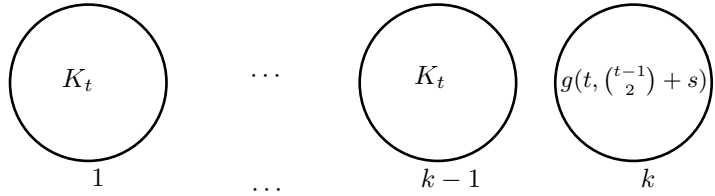
\end{conjecture}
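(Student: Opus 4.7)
} The plan is to proceed by induction on $k$, using Rowlinson's theorem (the $k=1$ case stated above) as the base. First I would verify that the proposed extremal graph
\[
G^{*} = (k-1)K_{t}\;\sqcup\; g\bigl(t,\tbinom{t-1}{2}+s\bigr)\;\sqcup\; (n-kt)K_{1}
\]
realises the claimed value: the top-$k$ adjacency eigenvalues of $G^{*}$ are the spectral radii of its $k$ non-trivial components (the other eigenvalues of each $K_{t}$ equal $-1$, the other eigenvalues of $g(t,\tbinom{t-1}{2}+s)$ are strictly smaller than its spectral radius, and the isolated vertices contribute zeros), so $\sum_{i=1}^{k}\lambda_{i}(G^{*}) = (k-1)(t-1) + \lambda_{1}\bigl(g(t,\tbinom{t-1}{2}+s)\bigr)$. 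Since isolated vertices do not affect the spectrum, one may assume $n = kt$.

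The main step is a \emph{structural lemma}: an extremal $G$ on $kt$ vertices with $e$ edges is a disjoint union of $k$ non-trivial connected components $H_{1},\ldots,H_{k}$, each of order at most $t$, and the top-$k$ eigenvalues of $A(G)$ are precisely $\lambda_{1}(H_{1}),\ldots,\lambda_{1}(H_{k})$. The argument I have in mind uses the variational formula (Theorem \ref{trace}) together with an exchange principle: if some top-$k$ eigenvalue of $G$ equals $\lambda_{2}(H_{j})$ for a component $H_{j}$, splitting $H_{j}$ suitably and redistributing edges into a new component strictly increases $\sum_{i=1}^{k}\lambda_{i}$ by Fan's inequality (Theorem \ref{summaj}), contradicting extremality; the component-size bound $|V(H_{j})| \leq t$ comes from interlacing (Theorem \ref{int}) combined with the Brualdi--Hoffman--Rowlinson bound.

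Once the structural lemma is in place, applying Rowlinson's theorem to each component gives $\lambda_{1}(H_{j}) \leq \lambda_{1}(g(n_{j},e_{j}))$, reducing the claim to an optimisation over tuples $(n_{j},e_{j})$ subject to $\sum n_{j} = n$, $\sum e_{j} = e$, $n_{j}\leq t$, and $e_{j}\leq \binom{n_{j}}{2}$. Monotonicity of $\lambda_{1}(g(n,e))$ in both arguments, together with an edge-exchange argument (the marginal gain $\lambda_{1}(g(n,e+1))-\lambda_{1}(g(n,e))$ being decreasing in $e$), forces the optimum to assign $\binom{t}{2}$ edges to each of $k-1$ components (making them $K_{t}$) and $\binom{t-1}{2}+s$ to the last, giving $G^{*}$.

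The hard part will be the structural lemma, in particular the bound $n_{j}\leq t$ on component orders. Already at $k=1$ Rowlinson's proof is delicate; for $k\geq 2$ the interaction between components makes the exchange arguments substantially more subtle. The hypothesis $s\geq e_{t}$ in the conjecture likely reflects the regime in which the lemma holds: for small $s$, an alternative configuration---consolidating the $s$ ``free'' edges into a single component of order $t+1$, or unbalancing sizes so that one component contributes two eigenvalues to the top-$k$---may outperform the clean $(k-1)K_{t}\sqcup g(t,\cdot)$ structure, and the threshold $e_{t}<t$ marks where this alternative ceases to dominate.
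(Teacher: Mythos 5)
The statement you were given is presented in the paper as a \emph{conjecture}: the author offers no proof, only computational evidence drawn from the graph catalogues cited at the start of Section 6. There is therefore no proof in the paper to compare against, and your proposal must be judged as an attempted resolution of an open problem. As such it is a research programme, not a proof. The entire weight rests on your ``structural lemma'' --- that an extremal graph decomposes into $k$ nontrivial components, each of order at most $t$, whose spectral radii are exactly the top $k$ adjacency eigenvalues --- and this lemma is asserted rather than proved; it is essentially a restatement of the hard content of the conjecture. The exchange argument you sketch for it does not work as described: Theorem \ref{summaj} bounds $\sum_{i=1}^{k}\lambda_i(A+B)$ from \emph{above} by $\sum_{i=1}^{k}\lambda_i(A)+\sum_{i=1}^{k}\lambda_i(B)$, so for an edge decomposition it only says that splitting cannot lose anything in aggregate; it cannot show that splitting a component and redistributing its edges \emph{strictly increases} the objective, which is what your contradiction requires. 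Similarly, ``the component-size bound comes from interlacing combined with Brualdi--Hoffman--Rowlinson'' is not an argument: Theorem \ref{int} compares a matrix with its principal submatrices and gives no mechanism for excluding an extremal component on more than $t$ vertices.

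Two further gaps. First, the final optimisation over tuples $(n_j,e_j)$ invokes a concavity property of $e\mapsto\lambda_{1}\bigl(g(n,e)\bigr)$ (``marginal gain decreasing in $e$'') that is not established anywhere and would itself require a nontrivial proof; without it the conclusion that the optimum assigns $\binom{t}{2}$ edges to $k-1$ components and $\binom{t-1}{2}+s$ to the last does not follow. Second, you correctly observe that the hypothesis $s\geq e_t$ must enter somewhere, but your outline never locates it quantitatively: if the balanced configuration can genuinely be beaten for small $s$, a correct proof must identify the threshold $e_t$ and show the comparison flips there, and nothing in the plan does this. The one part that is sound is the verification that the proposed extremal graph attains the value $(k-1)(t-1)+\lambda_{1}\bigl(g(t,\binom{t-1}{2}+s)\bigr)$, which confirms the bound is sharp if true, but does not advance the upper bound itself.
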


\section*{Acknowledgments}
The author would like to thank Prof. Dariush Kiani for his very careful reading of this paper and for his very helpful advice, and Dr. Sara Saeedi Madani for her helpful comments and suggestions. The author is indebted to Iran National Science Foundation (INSF) for supporting
this research under grant number 95005902. 

\bibliographystyle{amsplain}

\end{document}